\newtheorem{thm}{Theorem}
\newtheorem{lem}{Lemma}
\newtheorem{prop}{Proposition}
\theoremstyle{definition}
\newtheorem{rem}{Remark}
\newcommand{\ensembles}[1]{\mathbf{#1}}
	\newcommand{\N}{\ensembles{N}}
	\newcommand{\Z}{\ensembles{Z}}
	\newcommand{\R}{\ensembles{R}}
	\renewcommand{\P}{\ensembles{P}}
	\newcommand{\E}{\ensembles{E}}
	\newcommand{\Var}{\mathrm{Var}}
\DeclareMathAlphabet{\mathbbo}{U}{bbold}{m}{n}
	\newcommand{\1}{\mathbbo{1}}
\newcommand{\ind}[1]{\1_{\{#1\}}}
\renewcommand{\Pr}[1]{\P\left(#1\right)}
\newcommand{\Prc}[2]{\P\left(#1 \;\middle|\; #2\right)}
\newcommand{\Es}[1]{\E\left[#1\right]}
\newcommand{\Esc}[2]{\E\left[#1 \;\middle|\; #2\right]}
\newcommand{\Map}{\mathbf{M}}
\newcommand{\Mapp}{\Map^\bullet}
\newcommand{\CRT}{\mathscr{T}}
\newcommand{\Bmap}{\mathscr{M}}
\newcommand{\dBmap}{\mathscr{D}}
\newcommand{\pBmap}{\mathscr{m}}
\newcommand{\dgr}{d_{\mathrm{gr}}}
\newcommand{\pgr}{p_{\mathrm{gr}}}
\renewcommand{\d}{\mathrm{d}}
\newcommand{\e}{\mathrm{e}}
\newcommand{\Xexc}{\mathscr{X}}
\newcommand{\Hexc}{\mathscr{H}}
\newcommand{\Lab}{\mathscr{L}}
\newcommand{\Snake}{\mathscr{S}}
\newcommand{\q}{\mathbf{q}}
\renewcommand{\i}{\mathrm{i}}
\newcommand{\cv}[1][n]{\enskip\mathop{\longrightarrow}^{}_{#1 \to \infty}\enskip}
\newcommand{\cvloi}[1][n]{\enskip\mathop{\longrightarrow}^{(d)}_{#1 \to \infty}\enskip}
\newcommand{\cvproba}[1][n]{\enskip\mathop{\longrightarrow}^{\P}_{#1 \to \infty}\enskip}
\newcommand{\eqloi}{\enskip\mathop{=}^{(d)}\enskip}
\title{On scaling limits of planar maps with stable face-degrees}
\author{Cyril \textsc{Marzouk}}
\address{Laboratoire de Mathématiques d’Orsay, Univ. Paris-Sud, CNRS, Université Paris-Saclay, 91405 Orsay, France.}
\email{cyril.marzouk@u-psud.fr}
\begin{document}

\maketitle

\begin{abstract}
We discuss the asymptotic behaviour of random critical Boltzmann planar maps in which the degree of a typical face belongs to the domain of attraction of a stable law with index $\alpha \in (1,2]$. We prove that when conditioning such maps to have $n$ vertices, or $n$ edges, or $n$ faces, the vertex-set endowed with the graph distance suitably rescaled  converges in distribution towards the celebrated Brownian map when $\alpha=2$, and, after extraction of a subsequence, towards another `$\alpha$-stable map'  when $\alpha <2$, which improves on a first result due to Le Gall \& Miermont who assumed slightly more regularity.
\end{abstract}

\begin{figure}[!ht] \centering
\includegraphics[width=.45\linewidth, trim=6cm 7cm 10cm 6cm, clip]{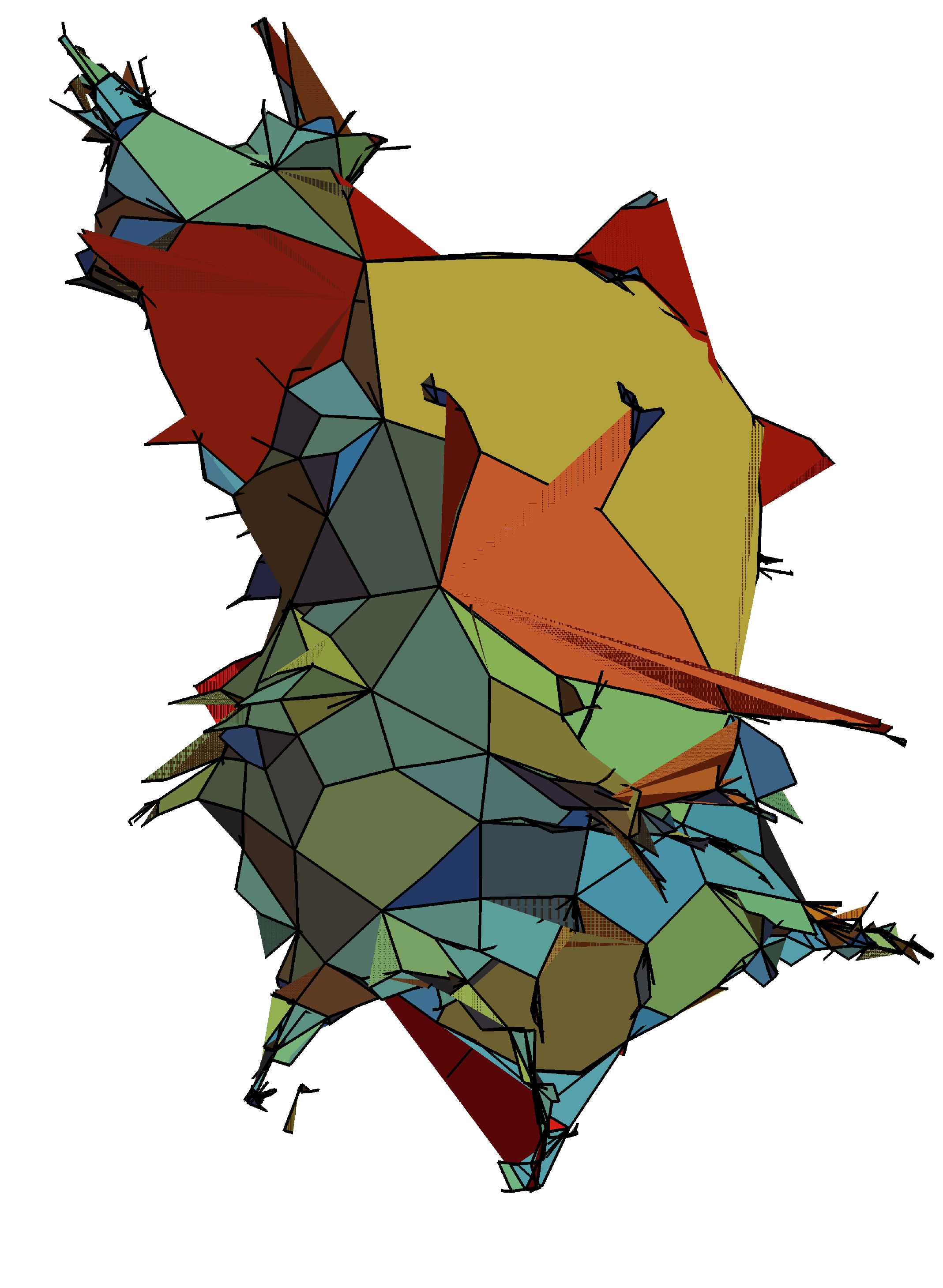} 
\quad
\includegraphics[width=.45\linewidth, trim=7cm 17cm 6cm 8cm, clip]{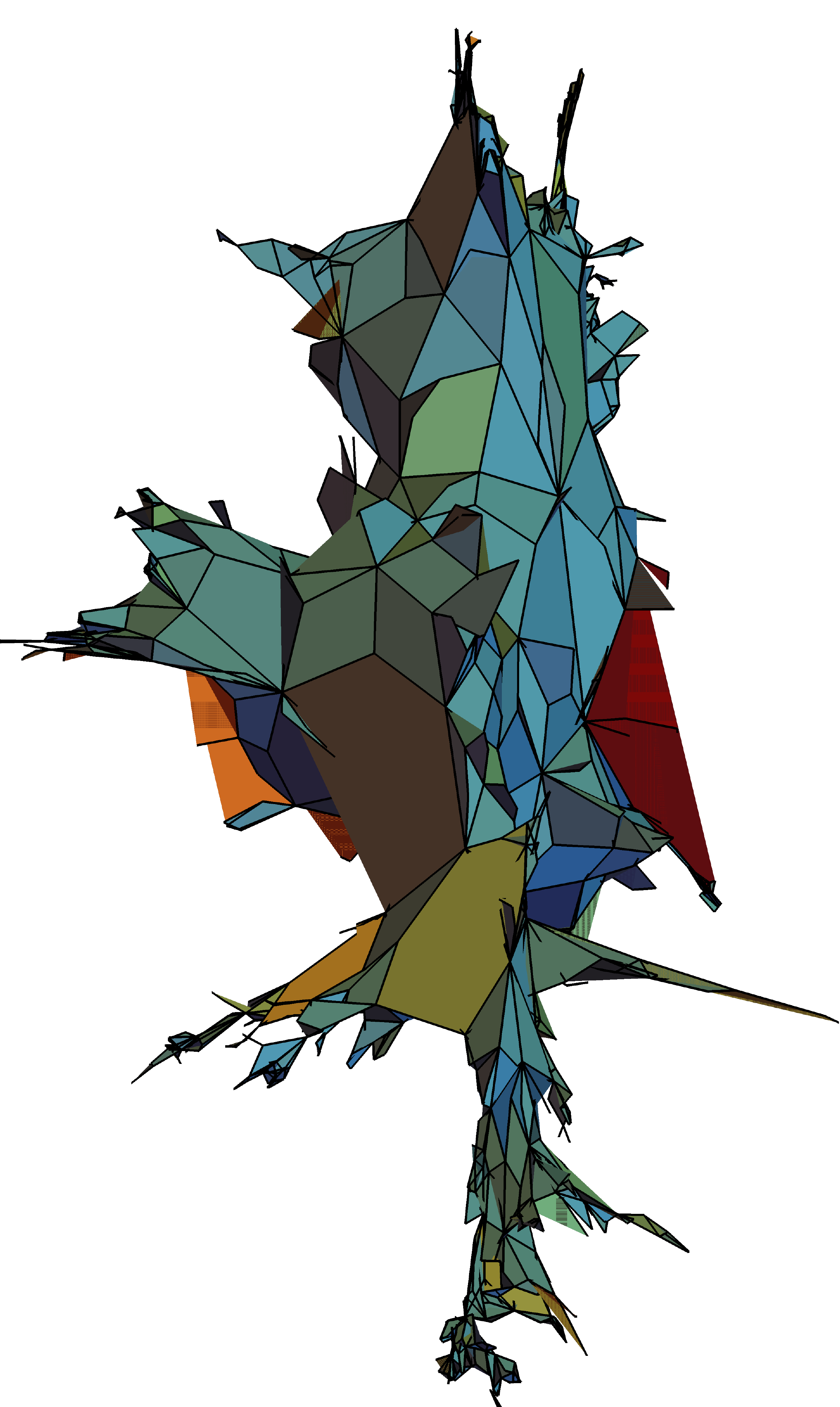}
\caption{Simulations of large $\alpha$-stable Boltzmann maps with $\alpha=\numprint{1.7}$ on the left and $\alpha=\numprint{1.9}$ on the right. Courtesy of Nicolas Curien.}
\label{fig:simu_cartes_stables}
\end{figure}

\section{Introduction and main result}

This work deals with scaling limits of large random planar maps viewed as metric measured spaces. We assume that the reader is already acquainted with this theory; let us describe the precise model that we consider before stating our main results.

We study rooted planar maps, which are finite (multi-)graphs embedded in the two-dimensional sphere, viewed up to homeomorphisms, and equipped with a distinguished oriented edge called the \emph{root-edge}. For technical reasons, we restrict ourselves to \emph{bipartite} maps which are those maps in which all faces have even degree. Given a sequence $\q= (q_{k})_{ k \geq 1}$ of non-negative numbers such that $q_k \ne 0$ for at least one $k \ge 3$ (in order to discard trivial cases), we define a Boltzmann measure $w^\q$ on the set $\Map$ of all finite bipartite maps by assigning a weight:
\[w^\q(M) \enskip=\enskip \prod_{f\in \mathsf{Faces}(M)} q_{ \deg(f)/2},\]
to each such map $M$. We shall also consider rooted and \emph{pointed} maps in which we distinguish a vertex $\star$ in a map $M$; we then define a pointed Boltzmann measure on the set $\Map^\bullet$ of pointed maps by setting $w^{\q, \bullet}(M, \star) = w^\q(M)$. Let $W^\q = w^\q(\Map)$ and $W^{\q, \bullet} = w^{\q, \bullet}(\Map^\bullet)$ be their total mass; obviously the latter is greater than the former, but Bernardi, Curien \& Miermont~\cite{Bernardi-Curien-Miermont:A_Boltzmann_approach_to_percolation_on_random_triangulations} proved that
\[W^\q < \infty \quad\text{if and only if}\quad W^{\q, \bullet} < \infty.\]
When this holds, we say that the sequence $\q$ is \emph{admissible} and we normalise our measures into probability measures $\P^\q$ and $\P^{\q, \bullet}$ respectively. We assume further that $\q$ is \emph{critical}, which means that the number of vertices of a map has infinite variance under $\P^\q$, or equivalently infinite mean under $\P^{\q, \bullet}$.

Such models of random maps have been first considered by Marckert \& Miermont~\cite{Marckert-Miermont:Invariance_principles_for_random_bipartite_planar_maps} who gave analytic admissibility and criticality criteria, recast in~\cite{Marzouk:Scaling_limits_of_random_bipartite_planar_maps_with_a_prescribed_degree_sequence}, and which we shall recall later. 
Following the terminology introduced in the very recent work of Curien \& Richier~\cite{Curien-Richier:Duality_of_random_planar_maps_via_percolation}, we further assume that there exists $\alpha \in (1,2]$ such that our distributions are \emph{discrete stable with index $\alpha$}, which we define as follows: Under the pointed law $\P^{\q, \bullet}$, the degree of the face adjacent to the right of the root-edge (called the \emph{root-face}) belongs to the domain of attraction of a stable law with index $\alpha$. It can be checked that this degree under the non-pointed law $\P^\q$ is more regular, and under this assumption has finite variance for every $\alpha \in (1,2]$.
We shall interpret the law of this degree as that of a typical face in a large pointed or non-pointed Boltzmann random map.
Such an assumption was first formalised by Richier~\cite{Richier:Limits_of_the_boundary_of_random_planar_maps} (except that the case $\alpha=2$ was restricted to finite variance) and is more general than the one used e.g. in~\cite{Le_Gall-Miermont:Scaling_limits_of_random_planar_maps_with_large_faces,Marzouk:Scaling_limits_of_random_bipartite_planar_maps_with_a_prescribed_degree_sequence}.

For every integer $n \ge 2$, let $\Map_{E=n}$, $\Map_{V=n}$ and $\Map_{F=n}$ be the subsets of $\Map$ of those maps with respectively $n-1$ edges, $n+1$ vertices (these shifts by one will simplify the statements) and $n$ faces. For every $S = \{E, V, F\}$ and every $n \ge 2$, we define
\[\P^\q_{S=n}(M) = \P^\q(M \mid M \in \Map_{S=n}),
\qquad M \in \Map_{S=n},\]
the law of a rooted Boltzmann map conditioned to have `size' $n$. We define similarly pointed laws $\P^{\q, \bullet}_{S=n}$. Let us denote by $\zeta(M_n)$ the number of edges of the map $M_n$ sampled from such a law; note that it equals $n-1$ if $S=E$ but it is random otherwise.
We shall implicitly assume that the support of $\q$ generates the whole group $\Z$, not just a strict subgroup, so these laws are well-defined for every $n$ large enough; the general case only requires mild modifications.
We consider limits of large random maps in the following sense: given a finite map $M$, we endow its vertex-set (which we still denote by $M$) with the graph distance $\dgr$ and the uniform probability measure $\pgr$; the topology we use is then that given by the so-called \emph{Gromov--Hausdorff--Prokhorov} distance which makes the space of compact metric measured spaces (viewed up to isometries) a Polish space, see e.g. Miermont~\cite{Miermont:Tessellations_of_random_maps_of_arbitrary_genus}.

\begin{thm}
\label{thm:cv_cartes}
There exists an increasing sequence $(B_n)_{n \ge 1}$ such that the following holds. Fix $S \in \{E, V, F\}$ and for every $n \ge 2$, sample $M_n$ from $\P^\q_{S=n}$ or from $\P^{\q, \bullet}_{S=n}$, then:
\begin{enumerate}
\item If $\alpha=2$, then we have the convergence in distribution in the sense of Gromov--Hausdorff--Prokhorov
\[\left(M_n, B_{\zeta(M_n)}^{-1/2} \dgr, \pgr\right) \cvloi (\Bmap, \dBmap, \pBmap),\]
where $(\Bmap, (\frac{9}{8})^{1/4} \dBmap, \pBmap)$ is the (standard) \emph{Brownian map}.

\item If $\alpha<2$, then from every increasing sequence of integers, one can extract a subsequence along which we have the convergence in distribution in the sense of Gromov--Hausdorff--Prokhorov,
\[\left(M_n, B_{\zeta(M_n)}^{-1/2} \dgr, \pgr\right) \cvloi (\Bmap, \dBmap, \pBmap),\]
where $(\Bmap, \dBmap, \pBmap)$ is a random compact measured metric space with Hausdorff dimension $2\alpha$.
\end{enumerate}
\end{thm}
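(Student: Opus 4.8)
The strategy follows the by-now classical route for scaling limits of Boltzmann maps via labelled mobiles (the Bouttier--Di Francesco--Guitter bijection), but the key point is to reduce all three conditionings $S\in\{E,V,F\}$ — and both the pointed and non-pointed laws — to a single convergence statement for a suitably conditioned two-type Galton--Watson tree carrying Brownian-snake-type labels. The plan is as follows. First, via the BDG bijection, a pointed Boltzmann map $(M,\star)$ corresponds to a labelled mobile, i.e.\ a two-type plane tree (white vertices $\leftrightarrow$ vertices of $M$ other than $\star$, black vertices $\leftrightarrow$ faces of $M$) equipped with integer labels on the white vertices with prescribed increments, and the graph distances in $M$ from $\star$ are read off exactly, while distances between two typical vertices are controlled above and below by label differences (the Bouttier--Di Francesco--Guitter / Le Gall bounds). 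Under $\P^{\q,\bullet}$ this mobile is a critical two-type Galton--Watson tree whose black-vertex offspring distribution is, up to a shift, the law of $\deg(\text{root-face})/2$, hence in the domain of attraction of an $\alpha$-stable law; the white-vertex offspring law has finite variance. Conditioning on $E=n$, $V=n$ or $F=n$ amounts to conditioning this tree to have a prescribed number of edges, white vertices, or black vertices respectively, and by a spinal-decomposition/absolute-continuity argument (as in \cite{Marzouk:Scaling_limits_of_random_bipartite_planar_maps_with_a_prescribed_degree_sequence}) all three conditionings, as well as the passage between pointed and non-pointed laws, are mutually contiguous after the appropriate identification of the scaling constant $B_n$.

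Second, I would establish the scaling limit of the conditioned labelled mobile. The contour/height function of the two-type tree, when $\alpha=2$, converges after rescaling by $B_n^{-1/2}$ to a Brownian excursion (with the right variance constant coming from the combined offspring variances), and the label process converges jointly to the head of the Brownian snake driven by that excursion — this is the Brownian continuum random tree / Brownian map input, and identification of the $(9/8)^{1/4}$ constant is routine bookkeeping. When $\alpha<2$, the height function of the reduced tree converges, along subsequences only, to the height process of some $\alpha$-stable-type continuum tree (here one only gets tightness of the coding functions; the lack of extra regularity in the hypothesis is precisely why convergence along the full sequence is not claimed — this matches Le Gall--Miermont \cite{Le_Gall-Miermont:Scaling_limits_of_random_planar_maps_with_large_faces}), and the labels again converge jointly to the corresponding snake. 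Tightness of the label process in both regimes uses the finite-variance of the white offspring law together with a moment bound (a Kesten-type / BDG inequality) for label increments along the spine.

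Third, from the convergence of the coding pair $(\text{height}, \text{label})$ one deduces Gromov--Hausdorff--Prokhorov tightness of $(M_n, B_{\zeta(M_n)}^{-1/2}\dgr, \pgr)$: the upper bound on distances is immediate from the labels, and tightness of the lower bound follows from a re-rooting argument at a uniform vertex plus the standard compactness criterion. Any subsequential limit is then a quotient of the limiting continuum tree by a pseudo-distance built from the limiting snake, exactly as in the construction of the Brownian map and of the stable maps of \cite{Le_Gall-Miermont:Scaling_limits_of_random_planar_maps_with_large_faces}; when $\alpha=2$ the uniqueness of the Brownian map (Le Gall, Miermont) pins down the limit and upgrades subsequential convergence to full convergence, whereas for $\alpha<2$ one only gets subsequential limits, whose Hausdorff dimension $2\alpha$ is computed from the Hölder regularity of the snake and a volume estimate on balls (again following \cite{Le_Gall-Miermont:Scaling_limits_of_random_planar_maps_with_large_faces}). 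Finally, the measure $\pgr$ converges to the mass measure $\pBmap$ because a uniform vertex of $M_n$ corresponds to a uniform white vertex of the mobile, which converges to a uniform point of the continuum tree and hence projects to $\pBmap$.

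The main obstacle I expect is the third step — establishing the lower bound on rescaled graph distances, i.e.\ that distances are not asymptotically much smaller than label differences predict. In the finite-variance/$\alpha=2$ case this is handled by Le Gall's classical argument, but under the present weaker hypothesis (only domain of attraction, and for $\alpha<2$ only subsequential control of the tree) one must redo the key combinatorial estimate — controlling, via the BDG bijection, the probability that two vertices at comparable label are close in the map — uniformly along the extracted subsequence and without the extra regularity assumed in \cite{Le_Gall-Miermont:Scaling_limits_of_random_planar_maps_with_large_faces}. A secondary difficulty is checking that the hypothesis "discrete stable under the \emph{pointed} law" indeed forces finite variance of the offspring law of white vertices and the claimed regularity under the non-pointed law, so that the two-type tree genuinely fits the two-type stable framework; this is where the precise admissibility/criticality criteria of Marckert--Miermont, as recast in \cite{Marzouk:Scaling_limits_of_random_bipartite_planar_maps_with_a_prescribed_degree_sequence}, enter.
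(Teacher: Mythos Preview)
Your overall architecture is sound, but it differs from the paper's in one deliberate way and contains two genuine misconceptions.

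\textbf{Different route.} You work with the Bouttier--Di Francesco--Guitter two-type mobile. The paper explicitly \emph{bypasses} this and uses only the Janson--Stef{\'a}nsson one-type tree: the pointed map corresponds to a single-type Bienaym\'e--Galton--Watson tree with offspring law $\mu_\q$ (which is itself in the domain of attraction of the $\alpha$-stable law), carrying labels given by uniform bridges in $\mathcal{B}_k^+$ at each vertex of offspring $k$. This buys a great deal: the {\L}ukasiewicz path of a one-type tree is a conditioned random walk, so tightness of labels reduces to tail bounds on $W_n(s)-\min_{[s,t]}W_n$ (Lemma~\ref{lem:moments_marche_Luka}), and finite-dimensional marginals are read directly off the jumps of $\Xexc$ when $\alpha<2$ or via a spinal decomposition when $\alpha=2$. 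Your two-type route is the older Le Gall--Miermont one and is workable, but the paper's point is precisely that it is unnecessarily heavy.

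\textbf{Where the subsequence lives.} You write that for $\alpha<2$ ``the height function of the reduced tree converges, along subsequences only \dots\ here one only gets tightness of the coding functions''. This is wrong. The height process converges in full for every $\alpha\in(1,2]$ (Duquesne, Kortchemski; this is~\eqref{eq:Duquesne_Kortchemski}), and the paper's Theorem~\ref{thm:cv_serpents_cartes} gives full convergence of the label process as well. The subsequence extraction enters \emph{only} for the rescaled map distance $d_{(n)}$: one has tightness of $d_{(n)}$ from the upper bound $d_{(n)}\le D_{L_{(n)}}+o(1)$, but for $\alpha<2$ the subsequential limit pseudo-metric $\dBmap$ is not known to be determined by $(\Hexc,\Lab)$.

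\textbf{The ``main obstacle'' is misidentified.} There is no lower-bound step of the kind you describe. Tightness of the metric spaces needs only the upper bound~\eqref{eq:bornes_distances_carte}. For $\alpha<2$ nothing further is proved about $\dBmap$ beyond the Hausdorff-dimension computation (which is taken from~\cite{Le_Gall-Miermont:Scaling_limits_of_random_planar_maps_with_large_faces}). For $\alpha=2$ the identification $\dBmap=\dBmap^\ast$ is not obtained via a combinatorial lower bound on graph distances; it follows from the re-rooting invariance $\dgr(x,y)\stackrel{(d)}{=}\dgr(\star,y)$ for two independent uniform vertices, which passes to the limit and matches the known law $\dBmap^\ast(X,Y)\stackrel{(d)}{=}\Lab_Y-\underline{\Lab}$, together with the maximality of $\dBmap^\ast$ among pseudo-metrics bounded by $D_\Lab$ and vanishing on $\{\pi(s)=\pi(t)\}$. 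Your proposed ``redo the key combinatorial estimate uniformly along the extracted subsequence'' is not what is done, and would indeed be hard under the present hypotheses; fortunately it is not needed.
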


\begin{rem}\label{rem:nombre_aretes_carte}
\begin{enumerate}
\item This result is reminiscent of the work of Duquesne~\cite{Duquesne:A_limit_theorem_for_the_contour_process_of_conditioned_Galton_Watson_trees} and Kortchemski~\cite{Kortchemski:Invariance_principles_for_Galton_Watson_trees_conditioned_on_the_number_of_leaves} on size-conditioned Bienaymé--Galton--Watson trees (see~\eqref{eq:Duquesne_Kortchemski} below) and indeed, the sequence $(B_n)_{n \ge 1}$ is the same as there; it is of order $n^{1/\alpha}$, and in the finite-variance regime, it takes the form $B_n = (n \sigma^2/2)^{1/2}$ for some $\sigma^2 \in (0,\infty)$.

\item We shall see in Remark~\ref{rem:nombre_aretes_arbre} that under $\P^\q_{S=n}$ or $\P^{\q, \bullet}_{S=n}$ we have for some constant $Z_\q > 1$
\[n^{-1} \zeta(M_n) \cvproba Z_\q \enskip\text{if}\enskip S=V
\qquad\text{and}\qquad
n^{-1} \zeta(M_n) \cvproba (1-Z_\q^{-1})^{-1} \enskip\text{if}\enskip S=F,\]
so the factor $B_{\zeta(M_n)}^{-1/2}$ may be replaced by $Z_\q^{-1/(2\alpha)} B_n^{-1/2}$ and $(1-Z_\q^{-1})^{1/(2\alpha)} B_n^{-1/2}$ respectively.
\end{enumerate}
\end{rem}

In the Gaussian case $\alpha=2$, tightness in the sense of Gromov--Hausdorff of rescaled uniform random \emph{$2\kappa$-angulations} (all faces have degree $2\kappa$ fixed) with $n$ faces was first obtained by Le Gall~\cite{Le_Gall:The_topological_structure_of_scaling_limits_of_large_planar_maps}. The Brownian map was then characterised independently by Le Gall~\cite{Le_Gall:Uniqueness_and_universality_of_the_Brownian_map} and Miermont~\cite{Miermont:The_Brownian_map_is_the_scaling_limit_of_uniform_random_plane_quadrangulations} which yields the convergence of these maps; building upon the pioneer work of Marckert \& Miermont~\cite{Marckert-Miermont:Invariance_principles_for_random_bipartite_planar_maps}, Le Gall~\cite{Le_Gall:Uniqueness_and_universality_of_the_Brownian_map} also includes Boltzmann planar maps conditioned by the number of vertices, assuming exponential moments. This assumption was then reduced to a second moment in~\cite{Marzouk:Scaling_limits_of_random_bipartite_planar_maps_with_a_prescribed_degree_sequence}, as a corollary of a more general model of random maps `with a prescribed degree sequence'. Let $\dBmap^\ast = (\frac{9}{8})^{1/4} \dBmap$, then in this finite variance regime, Theorem~\ref{thm:cv_cartes} reads thanks to the preceding remark:
\[\left(M_n, \left(\frac{9}{4 \sigma^2 \zeta(M_n)}\right)^{1/4} \dgr, \pgr\right) \cvloi (\Bmap, \dBmap^\ast, \pBmap),\]
which recovers~\cite[Theorem~3]{Marzouk:Scaling_limits_of_random_bipartite_planar_maps_with_a_prescribed_degree_sequence}.

In the case $\alpha < 2$, Theorem~\ref{thm:cv_cartes} extends a result due to Le Gall and Miermont~\cite{Le_Gall-Miermont:Scaling_limits_of_random_planar_maps_with_large_faces} who studied the Gromov--Hausdorff convergence of such maps conditioned by the number of vertices in the particular case where the probability that the root-face has degree $2k$ under $\P^{\q, \bullet}$ equals $C k^{-\alpha-1} (1+o(1))$ for some constant $C > 0$. Because the conjectured `stable maps' have not yet been characterised, the extraction of a subsequence is needed in Theorem~\ref{thm:cv_cartes}. Nonetheless, as in~\cite{Le_Gall-Miermont:Scaling_limits_of_random_planar_maps_with_large_faces}, we derive some scaling limits which do not necessitate such an extraction: in Theorem~\ref{thm:profil} below, we give the limit of the maximal distance to the distinguished vertex in a pointed map, or to a uniformly chosen vertex in the non-pointed map, as well as the \emph{profile} of the map, given by the number of vertices at distance $k$ to such a vertex, for every $k \ge 0$. Let us finally mention the work of Richier~\cite{Richier:Limits_of_the_boundary_of_random_planar_maps} and more recently with Kortchemski~\cite{Kortchemski-Richier:The_boundary_of_random_planar_maps_via_looptrees} who analyse the geometric behaviour of the \emph{boundary} of the root-face when conditioned to be large, and so, roughly speaking, the geometric behaviour of macroscopic faces of the map.

\begin{rem}
\begin{enumerate}
\item As in~\cite{Marzouk:Scaling_limits_of_random_bipartite_planar_maps_with_a_prescribed_degree_sequence}, the proof of Theorem~\ref{thm:cv_cartes} actually shows that we can also take as notion of size of a map the number of faces whose degree belongs to a fixed subset $A \subset 2\N$, at least when either $A$ or its complement is finite.
\item As observed in~\cite[Theorem~4]{Marzouk:Scaling_limits_of_random_bipartite_planar_maps_with_a_prescribed_degree_sequence}, the conditioning by the number of edges is special since $\Map_{E=n}$ is a finite set for every $n$ fixed so we may define the law $\P^\q_{E=n}$ even when $\q$ is not admissible and our results still holds under appropriate assumptions.
\item As in~\cite{Le_Gall-Miermont:Scaling_limits_of_random_planar_maps_with_large_faces}, Theorem~\ref{thm:cv_cartes} and the other main results below hold when conditioning the maps to have `size' \emph{at least} $n$, the references cover this case and the proofs only require mild modifications.
\end{enumerate}
\end{rem}

The proof of convergences as in Theorem~\ref{thm:cv_cartes} in~\cite{Le_Gall:Uniqueness_and_universality_of_the_Brownian_map, Le_Gall-Miermont:Scaling_limits_of_random_planar_maps_with_large_faces} relied on a bijection due to Bouttier, Di Francesco \& Guitter~\cite{Bouttier-Di_Francesco-Guitter:Planar_maps_as_labeled_mobiles} which shows that a pointed map is encoded by `two-type' \emph{labelled tree} and one of the key steps was to prove that this labelled tree, suitably rescaled, converges in distribution towards a `continuous' limit which similarly describes the limit $(\Bmap, \dBmap, \pBmap)$. In~\cite{Marzouk:Scaling_limits_of_random_bipartite_planar_maps_with_a_prescribed_degree_sequence} we studied this two-type tree by further relying on a more recent work of Janson--Stef{\'a}nsson~\cite{Janson-Stefansson:Scaling_limits_of_random_planar_maps_with_a_unique_large_face} who established a bijection between these a `two-type' trees and `one-type' trees which are much easier to control. The scheme of the proof of the analogous statement in~\cite{Marzouk:Scaling_limits_of_random_bipartite_planar_maps_with_a_prescribed_degree_sequence} was first to prove that this `one-type' labelled tree converges towards a continuous object, then transporting this convergence to the two-type tree and finally conclude from the arguments developed in~\cite{Le_Gall:Uniqueness_and_universality_of_the_Brownian_map, Le_Gall-Miermont:Scaling_limits_of_random_planar_maps_with_large_faces}.

In this paper, we bypass the bijection~\cite{Bouttier-Di_Francesco-Guitter:Planar_maps_as_labeled_mobiles} and only work with the one-type tree from~\cite{Janson-Stefansson:Scaling_limits_of_random_planar_maps_with_a_unique_large_face}; we prove the convergence of this object in Theorem~\ref{thm:cv_serpents_cartes} and deduce Theorem~\ref{thm:cv_cartes} by recasting the arguments from~\cite{Le_Gall:Uniqueness_and_universality_of_the_Brownian_map, Le_Gall-Miermont:Scaling_limits_of_random_planar_maps_with_large_faces}. On the one-hand, the advantage of the bijection from~\cite{Bouttier-Di_Francesco-Guitter:Planar_maps_as_labeled_mobiles} is that it also applies to non-bipartite maps (but it yields a `three-type' tree even more complicated to study) so in principle, one may use it to prove the convergence of such maps, whereas the bijection from~\cite{Janson-Stefansson:Scaling_limits_of_random_planar_maps_with_a_unique_large_face} only applies to bipartite maps. On the other hand, the latter bijection reduces the technical analysis of the tree, which opens the possibility to study more general models of random bipartite maps, such as those from~\cite{Marzouk:Scaling_limits_of_random_bipartite_planar_maps_with_a_prescribed_degree_sequence} in more complicated `large faces' regimes. In particular, our proof does not necessitate a tight control on the geometry of the tree, since it mostly relies on its {\L}ukasiewicz path which is rather simple to study.

The rest of this paper is organised as follows: In Section~\ref{sec:cartes_arbres}, we recall the key bijection with labelled trees which in our case are randomly labelled size-conditioned Bienaymé--Galton--Watson trees. We recall their continuous analogues in Section~\ref{sec:arbres_continus} and state and prove their convergence in Theorem~\ref{thm:cv_serpents_cartes} in Section~\ref{sec:limite_arbres_etiquetes} which contains most of the technical parts and novelties of this work. Finally, in Section~\ref{sec:limite_cartes}, we state and prove Theorem~\ref{thm:profil} on the profile of distances and then prove Theorem~\ref{thm:cv_cartes}.

\section*{Acknowledgement}
I wish to thank Nicolas Curien for providing the simulations in Figure~\ref{fig:simu_cartes_stables} and for a discussion on what `discrete stable map' could mean.

This work was supported by a public grant as part of the Fondation Mathématique Jacques Hadamard.

\section{Maps and labelled trees}
\label{sec:cartes_arbres}

In this first section, let us briefly recall the notion of labelled (plane) trees and introduce some useful notation. We also describe the bijection between a pointed planar map and such a tree.

\subsection{Plane trees}
\label{sec:arbres}

Following the notation of Neveu~\cite{Neuveu:Arbres_et_processus_de_Galton_Watson}, we view discrete trees as words. Let $\N = \{1, 2, \dots\}$ be the set of all positive integers and set $\N^0 = \{\varnothing\}$. Then a (plane) \emph{tree} is a non-empty subset $T \subset \bigcup_{n \ge 0} \N^n$ such that:
\begin{enumerate}
\item $\varnothing \in T$;
\item if $u = (u_1, \dots, u_n) \in T$, then $pr(u) = (u_1, \dots, u_{n-1}) \in T$;
\item if $u = (u_1, \dots, u_n) \in T$, then there exists an integer $k_u \ge 0$ such that $ui = (u_1, \dots, u_n, i) \in T$ if and only if $1 \le i \le k_u$.
\end{enumerate}
We shall view each vertex $u$ of a tree $T$ as an individual of a population for which $T$ is the genealogical tree. The vertex $\varnothing$ is called the \emph{root} of the tree and for every $u = (u_1, \dots, u_n) \in T$, $pr(u) = (u_1, \dots, u_{n-1})$ is its \emph{parent}, $k_u$ is the number of \emph{children} of $u$ (if $k_u = 0$, then $u$ is called a \emph{leaf}, otherwise, $u$ is called an \emph{internal vertex}), and $u1, \dots, uk_u$ are these children from left to right, $\chi_u = u_n$ is the relative position of $u$ among its siblings, and $|u| = n$ is its \emph{generation}. We shall denote by $\llbracket u , v \rrbracket$ the unique non-crossing path between $u$ and $v$.

Fix a tree $T$ with $N+1$ vertices, listed $\varnothing = u_0 < u_1 < \dots < u_N$ in \emph{lexicographical order}. We describe two discrete paths which each encode $T$. First, its \emph{{\L}ukasiewicz path} $W = (W(j) ; 0 \le j \le N+1)$ is defined by $W(0) = 0$ and for every $0 \le j \le N$,
\[W(j+1) = W(j) + k_{u_j}-1.\]
One easily checks that $W(j) \ge 0$ for every $0 \le j \le N$ but $W(N+1)=-1$. Next, we define the \emph{height process} $H = (H(j); 0 \le j \le N)$ by setting for every $0 \le j \le N$,
\[H(j) = |u_j|.\]

The next lemma, whose proof is left as an exercise, gathers some deterministic results that we shall need (we refer to e.g. Le Gall~\cite{Le_Gall:Random_trees_and_applications} for a thorough discussion of such results). In order to simplify the notation, we identify the vertices of a tree with their index in the lexicographic order.

\begin{lem}\label{lem:codage_marche_Luka}
Let $T$ be a plane tree and $W$ be its {\L}ukasiewicz path. Fix a vertex $u \in T$, then
\[W(u k_u) = W(u),
\qquad
W(uj') = \inf_{[uj,uj']} W
\qquad\text{and}\qquad
j' - j = W(uj) - W(uj')\]
for every $1 \le j \le j' \le k_u$.
\end{lem}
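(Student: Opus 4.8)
The plan is to exploit the basic combinatorial meaning of the {\L}ukasiewicz path: reading the children $u1, \dots, uk_u$ of $u$ in order, the value $W$ at the start of the subtree rooted at $uj$ records, in a suitable sense, the number of "pending" vertices still to be explored. I would first fix once and for all the lexicographic listing $\varnothing = u_0 < u_1 < \cdots < u_N$ and recall that between two consecutive children $uj$ and $u(j+1)$ of $u$, the lexicographic interval consists exactly of $uj$ together with all of its descendants, i.e. the subtree $T_{uj}$; call its size $|T_{uj}|$. The key observation, proved by a one-line telescoping of the increments $W(\cdot+1) - W(\cdot) = k_{(\cdot)} - 1$ over that interval, is that
\[
W(u(j+1)) \;=\; W(uj) + \big(|T_{uj}| \cdot (\text{average } k - 1 \text{ over } T_{uj})\big) \;=\; W(uj) + (-1),
\]
since any finite tree on $m$ vertices has $\sum (k_v - 1) = -1$. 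Hence $W$ \emph{decreases by exactly $1$} each time one passes from the start of one child-subtree of $u$ to the start of the next. This single fact will drive everything.

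From the "decrease by $1$ per child" property I would extract the three assertions. For $j = 1$ the starting index is $uk_u$... wait, rather: the first child $u1$ starts immediately after $u$ itself, so $W(u1) = W(u) + (k_u - 1)$, and then $W(uj) = W(u1) - (j-1) = W(u) + k_u - j$ for $1 \le j \le k_u$; in particular taking $j = k_u$ gives $W(uk_u) = W(u)$, which is the first identity. (One should also note $uk_u$ denotes the \emph{last child} of $u$, and the identity $W(uk_u) = W(u)$ is the statement that $W$ returns to its value at $u$ after exhausting all but the last child.) The third identity $j' - j = W(uj) - W(uj')$ is then immediate from $W(uj) = W(u) + k_u - j$. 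For the middle identity, $W(uj') = \inf_{[uj, uj']} W$: on the interval $[uj, uj']$ the path visits, in order, the subtrees $T_{uj}, T_{u(j+1)}, \dots, T_{u(j'-1)}$ and then the vertex $uj'$; within each subtree $T_{ui}$ the path stays $\ge W(ui)$ (a standard fact: inside a subtree the {\L}ukasiewicz path never drops below its value at the subtree root, again by the $\sum(k_v-1) = -1$ bookkeeping applied to prefixes), and the sequence of subtree-root values $W(uj) > W(u(j+1)) > \cdots > W(uj')$ is strictly decreasing by $1$; hence the global infimum on $[uj, uj']$ is attained at the final point $uj'$, giving the claim.

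The only genuine ingredient is the "subtree stays above its root value" fact, which itself follows from the same conservation law $\sum_{v} (k_v - 1) = -1$ applied to proper prefixes of a tree: for any proper initial segment of a tree one has $\sum (k_v - 1) \ge 0$. I would prove this by induction on tree size or simply cite it from Le Gall~\cite{Le_Gall:Random_trees_and_applications}, as the lemma statement already allows ("whose proof is left as an exercise"). I do not expect any serious obstacle; the main care is bookkeeping the difference between "the vertex $uj$" and "the subtree $T_{uj}$" and being consistent about half-open versus closed index intervals, so I would set that notation up cleanly at the start and then the three identities drop out in a couple of lines each.
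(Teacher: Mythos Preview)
Your proposal is correct and is exactly the standard argument; the paper itself does not give a proof (it explicitly leaves this lemma as an exercise and points to Le~Gall~\cite{Le_Gall:Random_trees_and_applications}), so there is nothing to compare against. The only cosmetic point: your parenthetical gloss ``$W$ returns to its value at $u$ after exhausting all but the last child'' is slightly misleading---$W(uk_u)$ is the value at the moment the last child is \emph{first} visited, before its subtree is explored---but your computation $W(uj)=W(u)+k_u-j$ is right and yields all three identities as you describe.
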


Note that $W(u) - W(pr(u))$ equals the number of siblings of $u$ which lie to its right, so $W(u)$ equals the total number of individuals branching off to the right of the ancestral line $\llbracket \varnothing, u\llbracket$.

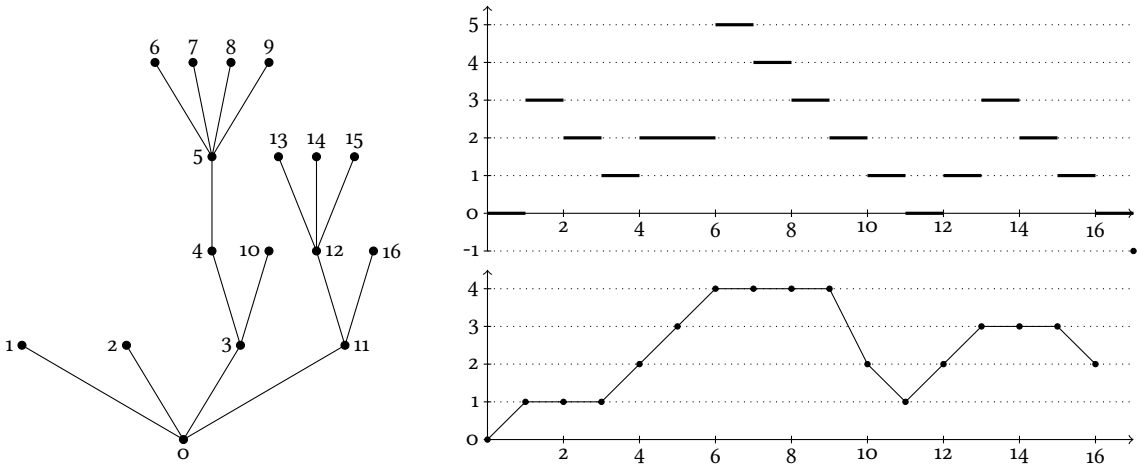
\begin{figure}[!ht]\centering
\def\r{.6}
\def\longueur{2.5}
\begin{footnotesize}
\begin{tikzpicture}[scale=.5]
\coordinate (1) at (0,0*\longueur);
	\coordinate (2) at (-4.25,1*\longueur);
	\coordinate (3) at (-1.5,1*\longueur);
	\coordinate (4) at (1.5,1*\longueur);
		\coordinate (5) at (.75,2*\longueur);
			\coordinate (6) at (.75,3*\longueur);
				\coordinate (7) at (-.75,4*\longueur);
				\coordinate (8) at (.25,4*\longueur);
				\coordinate (9) at (1.25,4*\longueur);
				\coordinate (10) at (2.25,4*\longueur);
		\coordinate (11) at (2.25,2*\longueur);
	\coordinate (12) at (4.25,1*\longueur);
		\coordinate (13) at (3.5,2*\longueur);
			\coordinate (14) at (2.5,3*\longueur);
			\coordinate (15) at (3.5,3*\longueur);
			\coordinate (16) at (4.5,3*\longueur);
		\coordinate (17) at (5,2*\longueur);

\draw
	(1) -- (2)	(1) -- (3)	(1) -- (4)	(1) -- (12)
	(4) -- (5)	(4) -- (11)
	(5) -- (6)
	(6) -- (7)	(6) -- (8)	(6) -- (9)	(6) -- (10)
	(12) -- (13)	(12) -- (17)
	(13) -- (14)	(13) -- (15)	(13) -- (16)
;

\foreach \x in {1, 2, ..., 17}
\draw[fill=black] (\x) circle (3pt);

\draw
	(1) node[below] {0}
	(2) node[left] {1}
	(3) node[left] {2}
	(4) node[left] {3}
	(5) node[left] {4}
	(6) node[left] {5}
	(7) node[above] {6}
	(8) node[above] {7}
	(9) node[above] {8}
	(10) node[above] {9}
	(11) node[left] {10}
	(12) node[right] {11}
	(13) node[right] {12}
	(14) node[above] {13}
	(15) node[above] {14}
	(16) node[above] {15}
	(17) node[right] {16};
\begin{scope}[shift={(8,6)}]
\draw[thin, ->]	(0,0) -- (17,0);
\draw[thin, ->]	(0,-1) -- (0,5.5);
\foreach \x in {-1, 1, 2, 3, 4, 5}
	\draw[dotted]	(0,\x) -- (17,\x);
\foreach \x in {-1, 0, 1, ..., 5}
	\draw (.1,\x)--(-.1,\x)	(0,\x) node[left] {\x};
\foreach \x in {2, 4, ..., 16}
	\draw (\x,.1)--(\x,-.1)	(\x,0) node[below] {\x};

\draw[very thick]
	(0, 0) -- ++ (1,0)
	++(0,3) -- ++ (1,0)
	++(0,-1) -- ++ (1,0)
	++(0,-1) -- ++ (1,0)
	++(0,1) -- ++ (1,0)
	++(0,0) -- ++ (1,0)
	++(0,3) -- ++ (1,0)
	++(0,-1) -- ++ (1,0)
	++(0,-1) -- ++ (1,0)
	++(0,-1) -- ++ (1,0)
	++(0,-1) -- ++ (1,0)
	++(0,-1) -- ++ (1,0)
	++(0,1) -- ++ (1,0)
	++(0,2) -- ++ (1,0)
	++(0,-1) -- ++ (1,0)
	++(0,-1) -- ++ (1,0)
	++(0,-1) -- ++ (1,0)
;
\draw[fill=black] (17,-1) circle (2pt);
\end{scope}
\begin{scope}[shift={(8,0)}]
\draw[thin, ->]	(0,0) -- (17,0);
\draw[thin, ->]	(0,0) -- (0,4.5);
\foreach \x in {1, 2, ..., 4}
	\draw[dotted]	(0,\x) -- (17,\x);
\foreach \x in {0, 1, ..., 4}
	\draw (.1,\x)--(-.1,\x)	(0,\x) node[left] {\x};
\foreach \x in {2, 4, ..., 16}
	\draw (\x,.1)--(\x,-.1)	(\x,0) node[below] {\x};

\draw[fill=black]
	(0, 0) circle (2pt) --
	++ (1, 1) circle (2pt) --
	++ (1, 0) circle (2pt) --
	++ (1, 0) circle (2pt) --
	++ (1, 1) circle (2pt) --
	++ (1, 1) circle (2pt) --
	++ (1, 1) circle (2pt) --
	++ (1, 0) circle (2pt) --
	++ (1, 0) circle (2pt) --
	++ (1, 0) circle (2pt) --
	++ (1, -2) circle (2pt) --
	++ (1, -1) circle (2pt) --
	++ (1, 1) circle (2pt) --
	++ (1, 1) circle (2pt) --
	++ (1, 0) circle (2pt) --
	++ (1, 0) circle (2pt) --
	++ (1, -1) circle (2pt)
;
\end{scope}
\end{tikzpicture}
\end{footnotesize}
\caption{A tree on the left, with its vertices listed in lexicographical order, and on the right, its {\L}ukasiewicz path $W$ on top and its height process $H$ below.}
\label{fig:codage_arbre}
\end{figure}

\subsection{Labelled trees}

For every $k \ge 1$, let us consider the set of \emph{bridges with no negative jumps}
\begin{equation}\label{eq:pont_sans_saut_negatif}
\mathcal{B}_k^+ = \left\{(x_1, \dots, x_k): x_1, x_2-x_1, \dots, x_k-x_{k-1} \in \{-1, 0, 1, 2, \dots\} \text{ and } x_k=0\right\}.
\end{equation}
A \emph{labelling} $\ell$ of a plane tree $T$ is a function defined on its set of vertices to $\Z$ such that
\begin{enumerate}
\item the root of $T$ has label $\ell(\varnothing) = 0$,
\item for every vertex $u$, with $k_u \ge 1$ children, the sequence of increments $(l(u1)-l(u), \dots, l(uk_u)-l(u))$ belongs to $\mathcal{B}_{k_u}^+$.
\end{enumerate}
We stress that the last child of every internal vertex carries the same label as its parent, for example, the right-most branch in the tree only contains zeros. 
Define the \emph{label process} $L(k) = \ell(u_k)$, where $(u_0, \dots, u_N)$ is the sequence of vertices of $T$ in lexicographical order; the labelled tree is encoded by the pair $(H, L)$, see Figure~\ref{fig:arbre_etiquete}.

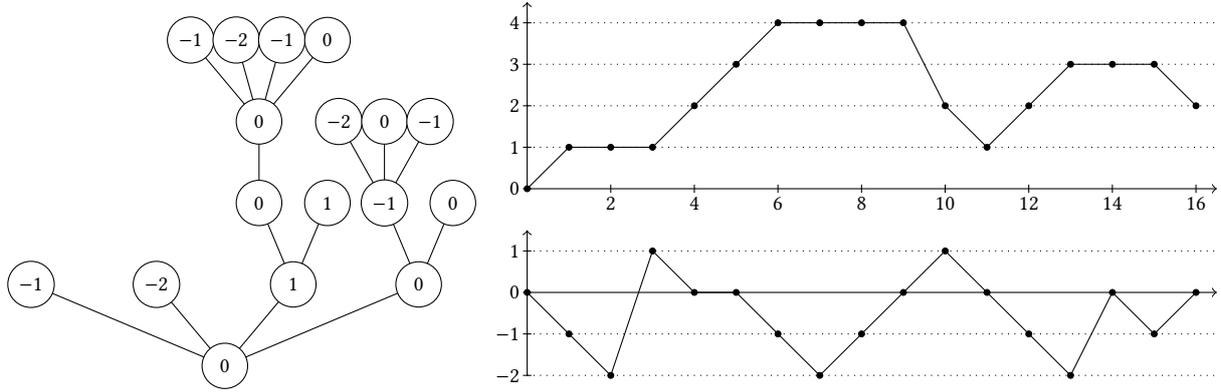
\begin{figure}[!ht]\centering
\def\r{.6}
\def\longueur{1.8}
\begin{tikzpicture}[scale=.6]
\coordinate (1) at (0,0*\longueur);
	\coordinate (2) at (-4.25,1*\longueur);
	\coordinate (3) at (-1.5,1*\longueur);
	\coordinate (4) at (1.5,1*\longueur);
		\coordinate (5) at (.75,2*\longueur);
			\coordinate (6) at (.75,3*\longueur);
				\coordinate (7) at (-.75,4*\longueur);
				\coordinate (8) at (.25,4*\longueur);
				\coordinate (9) at (1.25,4*\longueur);
				\coordinate (10) at (2.25,4*\longueur);
		\coordinate (11) at (2.25,2*\longueur);
	\coordinate (12) at (4.25,1*\longueur);
		\coordinate (13) at (3.5,2*\longueur);
			\coordinate (14) at (2.5,3*\longueur);
			\coordinate (15) at (3.5,3*\longueur);
			\coordinate (16) at (4.5,3*\longueur);
		\coordinate (17) at (5,2*\longueur);

\draw
	(1) -- (2)	(1) -- (3)	(1) -- (4)	(1) -- (12)
	(4) -- (5)	(4) -- (11)
	(5) -- (6)
	(6) -- (7)	(6) -- (8)	(6) -- (9)	(6) -- (10)
	(12) -- (13)	(12) -- (17)
	(13) -- (14)	(13) -- (15)	(13) -- (16)
;

\begin{scriptsize}
\node[circle, minimum size=\r cm, fill=white, draw] at (2) {$-1$};
\node[circle, minimum size=\r cm, fill=white, draw] at (3) {$-2$};
\node[circle, minimum size=\r cm, fill=white, draw] at (7) {$-1$};
\node[circle, minimum size=\r cm, fill=white, draw] at (8) {$-2$};
\node[circle, minimum size=\r cm, fill=white, draw] at (9) {$-1$};
\node[circle, minimum size=\r cm, fill=white, draw] at (10) {$0$};
\node[circle, minimum size=\r cm, fill=white, draw] at (11) {$1$};
\node[circle, minimum size=\r cm, fill=white, draw] at (14) {$-2$};
\node[circle, minimum size=\r cm, fill=white, draw] at (15) {$0$};
\node[circle, minimum size=\r cm, fill=white, draw] at (16) {$-1$};
\node[circle, minimum size=\r cm, fill=white, draw] at (17) {$0$};

\node[circle, minimum size=\r cm, fill=white, draw] at (1) {$0$};
\node[circle, minimum size=\r cm, fill=white, draw] at (4) {$1$};
\node[circle, minimum size=\r cm, fill=white, draw] at (5) {$0$};
\node[circle, minimum size=\r cm, fill=white, draw] at (6) {$0$};
\node[circle, minimum size=\r cm, fill=white, draw] at (12) {$0$};
\node[circle, minimum size=\r cm, fill=white, draw] at (13) {$-1$};
\end{scriptsize}
\end{tikzpicture}
\begin{scriptsize}
\begin{tikzpicture}[scale=.55]
\draw[thin, ->]	(0,0) -- (16.5,0);
\draw[thin, ->]	(0,0) -- (0,4.5);
\foreach \x in {1, 2, ..., 4}
	\draw[dotted]	(0,\x) -- (16.5,\x);
\foreach \x in {0, 1, ..., 4}
	\draw (.1,\x)--(-.1,\x)	(0,\x) node[left] {$\x$};
\foreach \x in {2, 4, ..., 16}
	\draw (\x,.1)--(\x,-.1)	(\x,0) node[below] {$\x$};
\coordinate (0) at (0, 0);
\coordinate (1) at (1, 1);
\coordinate (2) at (2, 1);
\coordinate (3) at (3, 1);
\coordinate (4) at (4, 2);
\coordinate (5) at (5, 3);
\coordinate (6) at (6, 4);
\coordinate (7) at (7, 4);
\coordinate (8) at (8, 4);
\coordinate (9) at (9, 4);
\coordinate (10) at (10, 2);
\coordinate (11) at (11, 1);
\coordinate (12) at (12, 2);
\coordinate (13) at (13, 3);
\coordinate (14) at (14, 3);
\coordinate (15) at (15, 3);
\coordinate (16) at (16, 2);
\newcommand{\lastx}{0}
\foreach \x [remember=\x as \lastx] in {1, 2, 3, ..., 16} \draw (\lastx) -- (\x);

\foreach \x in {0, 1, 2, 3, ..., 16} \draw [fill=black] (\x)	circle (2pt);
\begin{scope}[shift={(0,-2.5)}]
\draw[thin, ->]	(0,0) -- (16.5,0);
\draw[thin, ->]	(0,-2) -- (0,1.5);
\foreach \x in {-2, -1, 1}
	\draw[dotted]	(0,\x) -- (16.5,\x);
\foreach \x in {-2, -1, 0, 1}
	\draw (.1,\x)--(-.1,\x)	(0,\x) node[left] {$\x$};
%
\coordinate (0) at (0, 0);
\coordinate (1) at (1, -1);
\coordinate (2) at (2, -2);
\coordinate (3) at (3, 1);
\coordinate (4) at (4, 0);
\coordinate (5) at (5, 0);
\coordinate (6) at (6, -1);
\coordinate (7) at (7, -2);
\coordinate (8) at (8, -1);
\coordinate (9) at (9, 0);
\coordinate (10) at (10, 1);
\coordinate (11) at (11, 0);
\coordinate (12) at (12, -1);
\coordinate (13) at (13, -2);
\coordinate (14) at (14, 0);
\coordinate (15) at (15, -1);
\coordinate (16) at (16, 0);
\renewcommand{\lastx}{0}
\foreach \x [remember=\x as \lastx] in {1, 2, 3, ..., 16} \draw (\lastx) -- (\x);

\foreach \x in {0, 1, 2, 3, ..., 16} \draw [fill=black] (\x)	circle (2pt);
\end{scope}
\end{tikzpicture}
\end{scriptsize}
\caption{A labelled tree on the left, and on the right, its height process on top and its label height process below.}
\label{fig:arbre_etiquete}
\end{figure}

Without further notice, throughout this work, every {\L}ukasiewicz path shall be viewed as a step function, jumping at integer times, whereas height and label processes shall be viewed as continuous functions after interpolating linearly between integer times.

\subsection{Labelled trees and pointed maps}
\label{sec:bijection}

Bouttier, Di Francesco \& Guitter~\cite{Bouttier-Di_Francesco-Guitter:Planar_maps_as_labeled_mobiles} proved that pointed maps are in bijection with some labelled trees, different from the preceding section; in the bipartite case, Janson \& Stef{\'a}nsson~\cite{Janson-Stefansson:Scaling_limits_of_random_planar_maps_with_a_unique_large_face} then related these trees to labelled trees as in the preceding section. Let us describe a direct construction of this bijection between labelled trees and pointed maps and leave to the reader as an exercise to verify that it indeeds corresponds to the two previous bijections (one may compare the figures here and those in~\cite{Marzouk:Scaling_limits_of_random_bipartite_planar_maps_with_a_prescribed_degree_sequence}).

Let us start with the construction of a pointed map from a labelled tree $(T, \ell)$, depicted in Figure~\ref{fig:arbre_carte}; the construction contains two steps. Let $(u_0, \dots, u_N)$ be the vertices of $T$ listed in lexicographical order. For every $0 \le i \le N$, set $u_{N+1+i} = u_i$. We add an extra vertex $\star$ labelled $\min_{u \in T} \ell(u)-1$ outside of the tree $T$ and construct a first planar graph $G$ on the vertex-set of $T$ and $\star$ by drawing edges as follows: for every $0 \le i \le N-1$,
\begin{itemize}
\item if $\ell(u_i) > \min_{0 \le k \le N} \ell(u_k)$, then we draw an edge between $u_i$ and $u_j$ where $j = \min\{k > i: \ell(u_k) = \ell(u_i)-1\}$,
\item if $\ell(u_i) = \min_{0 \le k \le N} \ell(u_k)$, then we draw an edge between $u_i$ and $\star$.
\end{itemize}
We stress that we exclude the last vertex $u_N$ in this construction; it indeed yields a planar graph $G$. In a second step, we merge every internal vertex of the tree $T$ with their last child; then $G$ becomes a map $M$ with labelled vertices. We shift all labels by subtracting $\min_{u \in T} \ell(u)-1$;
it can be checked that these new labels are just the graph distance to $\star$ in the map $M$. We also distinguish the image after the merging operation of the first edge that we drew, for $i=0$. The latter is non-oriented; let $e_+$ and $e_-$ be its extremities so that $\dgr(e_-, \star) = \dgr(e_+, \star)-1$ and let us orient the edge from $e_+$ to $e_-$; these maps are called \emph{negative} in~\cite{Marckert-Miermont:Invariance_principles_for_random_bipartite_planar_maps}.

\begin{figure}[!ht] \centering
\includegraphics[height=9\baselineskip, page = 2]{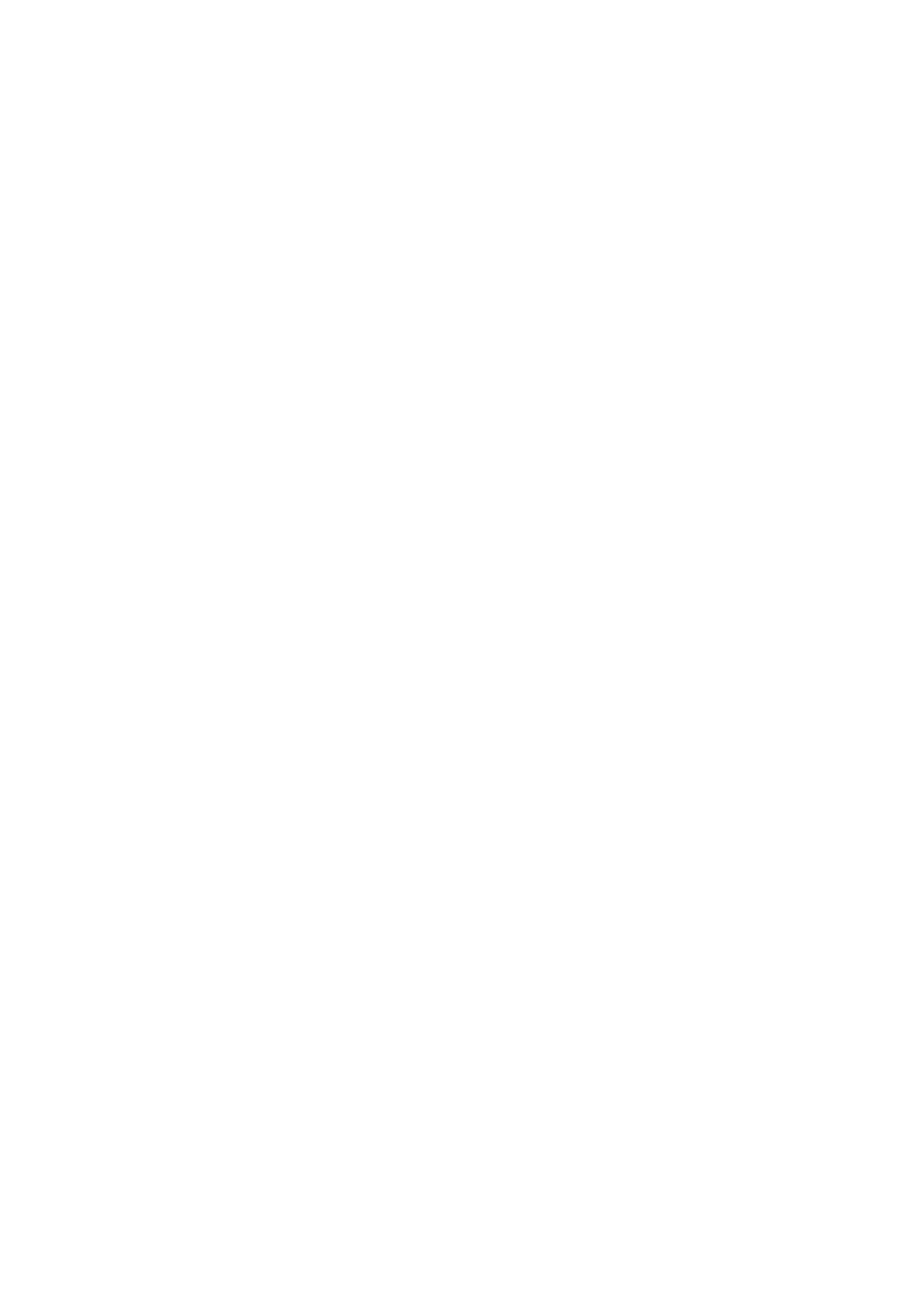}
\qquad
\includegraphics[height=10.5\baselineskip, page = 3]{Bijection_carte_arbre}
\newline~\newline
\includegraphics[height=10.5\baselineskip, page = 4]{Bijection_carte_arbre}
\qquad
\includegraphics[height=10.5\baselineskip, page = 5]{Bijection_carte_arbre}
\caption{The negative map associated with a labelled tree.}
\label{fig:arbre_carte}
\end{figure}

\begin{figure}[!ht]
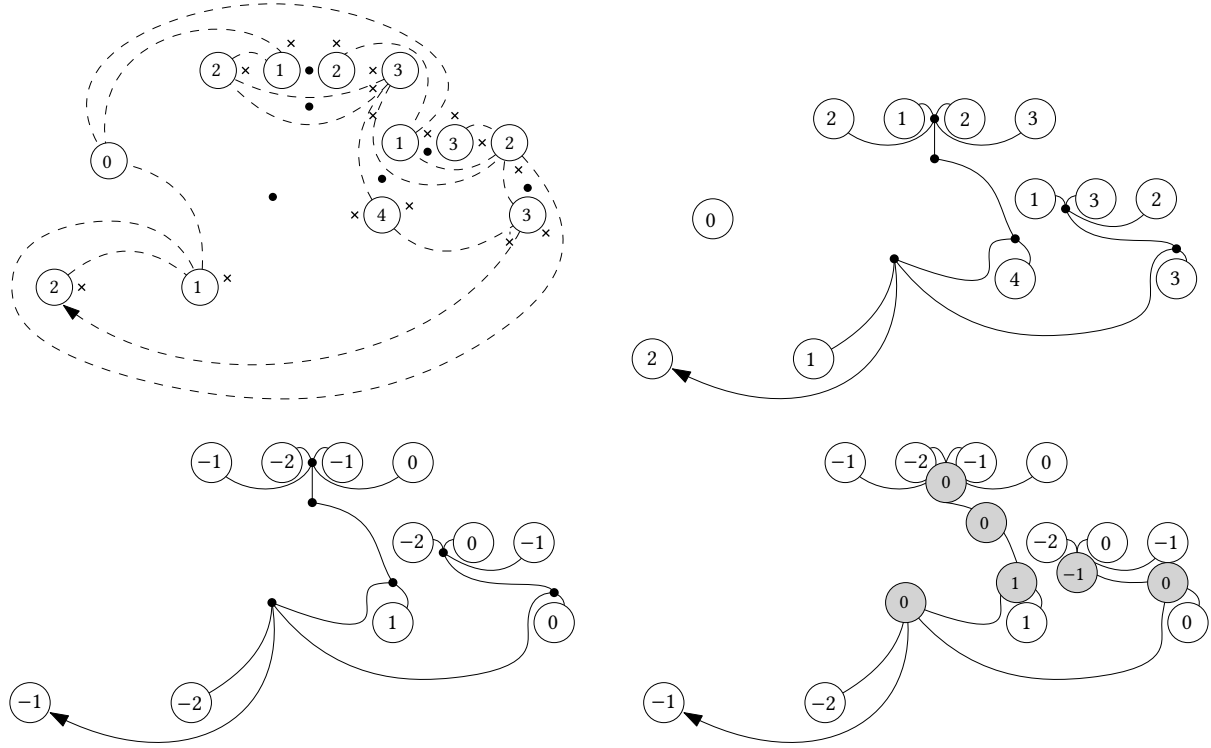
 \centering
\includegraphics[height=10.5\baselineskip, page = 6]{Bijection_carte_arbre}
\qquad
\includegraphics[height=8\baselineskip, page = 7]{Bijection_carte_arbre}
\newline~\newline
\includegraphics[height=8\baselineskip, page = 9]{Bijection_carte_arbre}
\qquad
\includegraphics[height=8\baselineskip, page = 10]{Bijection_carte_arbre}
\caption{The labelled tree associated with a negative map.}
\label{fig:carte_arbre}
\end{figure}

Let us next construct a labelled tree from a negative pointed map $(M, \star)$, as depicted in Figure~\ref{fig:carte_arbre}. First, label all vertices by their graph distance to $\star$. In every face of $M$, place a new unlabelled vertex and mark each corner when the next vertex of $M$ in clockwise order has a smaller label.
Then start with the root-face, adjacent to the right of the root-edge. Link the new vertex in this face to every marked corner if it is the only marked corner of this vertex, otherwise, erase the mark and link the new vertex to the one in the face which contains the next marked corner of this vertex in clockwise order. Proceed similarly with the new vertices attached to the one in the root-face: link each of them to the marked corners in their face if they are the only remaining ones around their vertex, otherwise, remove the mark and link the new vertex to the next one in clockwise order around the vertex. Continue recursively until all faces have been considered. This yields a planar tree that we root at the new vertex in the root-face, whose first child is either $e_-$ the target of the root-edge or the new vertex in the next face in clockwise order around it if any. Then assign to each new vertex the label of its last child and finally shift all labels so the root of the tree has label $0$ to get a labelled tree as in the preceding section.

We claim that these constructions are the inverse of one another and yield a bijection between labelled trees and negative maps (the construction is very close to~\cite{Bouttier-Di_Francesco-Guitter:Planar_maps_as_labeled_mobiles}, one can thus follow their detailed proof). Recall that the root-face of a map is the face adjacent to the right of the root-edge. This bijection enjoys the following properties:
\begin{enumerate}
\item The leaves of the tree are in one to one correspondence with the vertices different from the distinguished one in the map, and the label of a leaf minus the infimum over all labels, plus one, equals the graph distance between the corresponding vertex of the map and the distinguished vertex.
\item The internal vertices of the tree are in one to one correspondence with the faces of the map, and the number of children of the vertex is half the degree of the face.
\item The root-face of the map corresponds to the root-vertex of the tree.
\item The number of edges of the map and the tree are equal.
\end{enumerate}
In order to have a bijection between labelled trees and \emph{positive} maps (in which the root-edge is oriented from $e_-$ to $e_+$), one just reverse the root-edge in order to get a negative map. Note that Property~(iii) above does not hold anymore and it does not seem clear which internal vertex of the tree corresponds to the original root-face. Nonetheless, by `mirror symmetry' of the map (which preserves positivity or negativity of the map), the degree of the faces on both sides of the root-edge have the same distribution, so in both cases of positive or negative maps, the half-degree distribution of the root-face is the offspring distribution of the root of the tree.

Property~(i) above explains how to partially translate the metric properties of the map to the labelled tree, whereas Property~(ii) is important because it gives us the distribution of the tree when the map is a random Boltzmann map, as described below.

\subsection{Random labelled trees}
\label{sec:BGW}

Let us introduce the law of the labelled tree associated with a pointed map sampled from $\P^{\q, \bullet}$.
Let $q_0 = 1$ and define the power series
\[g_\q(x) = \sum_{k \ge 0} x^k \binom{2k-1}{k-1} q_k, \qquad x \ge 0.\]
Then $g_\q$ is convex, strictly increasing and continuous until its radius of convergence, and $g_\q(0)=1$. In particular, it has at most two fixed points, and if it has exactly one, then at that point, the graph of $g_\q$ either crosses the line $y=x$, or is tangent to it. 
It was argued in~\cite[Section~7.1]{Marzouk:Scaling_limits_of_random_bipartite_planar_maps_with_a_prescribed_degree_sequence}, recasting the discussion from~\cite[Section~1.2]{Marckert-Miermont:Invariance_principles_for_random_bipartite_planar_maps} in the context of the Janson--Stef{\'a}nsson bijection, that the sequence $\q$ is admissible and critical exactly when $g_\q$ falls into the last case, and we denote by $Z_\q$ the only fixed point, which satisfies $g_\q'(Z_\q) = 1$. Let us mention that $Z_\q$ equals $(W^{\q,\bullet}+1)/2 > 1$. Such a sequence $\q$ thus induces a probability measure on $\Z_+ = \{0, 1, 2, \dots\}$ with mean one, given by:
\begin{equation}\label{eq:loi_GW_carte_Boltzmann}
\mu_\q(k) = Z_\q^{k-1} \binom{2k-1}{k-1} q_k,
\qquad k \ge 0.
\end{equation}

We shall consider random labelled trees, sampled as follows. First, let $T$ be a Bienaymé--Galton--Watson tree with offspring distribution $\mu_\q$, which means that the probability that $T$ equals a given finite tree $\tau$ is $\prod_{u \in \tau} \mu_\q(k_u)$. For every subset $A \subset \Z_+$ such that $\mu_\q(A) \ne 0$ and for every $n \ge 1$, we let $T_{A,n}$ be such a tree conditioned to have exactly $n$ vertices with offspring in $A$; the asymptotic behaviour of such trees has been investigated by Kortchemski~\cite{Kortchemski:Invariance_principles_for_Galton_Watson_trees_conditioned_on_the_number_of_leaves}, with the restriction that either $A$ or its complement is finite. We shall be particularly interested in the sets $A=\Z_+$ so the tree is conditioned on its total progeny, $A=\{0\}$ so the tree is conditioned on its number of leaves, and $A = \N$ so the tree is conditioned on its number of internal vertices. We let $\zeta(T_{A,n})$ be the number of edges of $T_{A,n}$.

Next, conditional on the tree $T$ (or $T_{A,n}$), we sample uniformly random labels $(\ell(u))_{u \in T}$ satisfying the conditions described in Section~\ref{sec:arbres}: the root has label $\ell(\varnothing) = 0$ and the sequences $(\ell(ui)-\ell(u))_{1 \le i \le k_u}$ are independent when $u$ ranges over all internal vertices of $T$ and are distributed respectively uniformly at random in $\mathcal{B}_{k_u}^+$. Let us observe that the cardinal of $\mathcal{B}_k^+$ is precisely the binomial factor $\binom{2k-1}{k-1}$ in the definition of $\mu_\q$. Also, it is well-known and easy to check that a uniform random bridge in $\mathcal{B}^+_k$ has the law of the first $k$ steps of a random walk conditioned to end at $0$, with step distribution $\sum_{i \ge -1} 2^{-i-2} \delta_i$, which is centred and with variance $2$.

One easily checks (see e.g.~\cite[Proposition~11]{Marzouk:Scaling_limits_of_random_bipartite_planar_maps_with_a_prescribed_degree_sequence}) that this labelled tree $(T, (\ell(u))_{u \in T})$ is the one associated, in the bijection described previously, with a pointed Boltzmann map sampled from $\P^{\q, \bullet}$. 
Finally, the tree is conditioned to have $n$ vertices, or $n$ internal vertices, or $n$ leaves, when the map is conditioned to have $n-1$ edges, $n$ faces, and $n+1$ vertices respectively.
Thanks to Property~(iii) of the bijection in the preceding section, $\mu_\q$ is the law of the half-degree of the root-face under $\P^{\q, \bullet}$. For the rest of this paper, we further assume that it belongs to the domain of attraction of a stable law with index $\alpha \in (1,2]$, which means that either it has finite variance $\sum_{k=0}^\infty k^2 \mu_\q(k) < \infty$ and then $\alpha = 2$, or the tail can be written as $\sum_{k=j}^\infty \mu_\q(k) = j^{-\alpha} l(j)$, where $l$ is a \emph{slowly varying} function at infinity which means that for every $c > 0$, it holds that $\lim_{x \to \infty} l(cx) / l(x) = 1$.

We refer the reader to~\cite[Proposition~4]{Curien-Richier:Duality_of_random_planar_maps_via_percolation} for three equivalent assumptions. Using the notation from this reference, the root-face under $\P^\q$ has degree $2k$ with probability proportional to $q_k W_\q^{(k)}$ which, under our assumption, behaves as $q_k r_\q^{-k} k^{-\alpha-1/2}$. Using the fact that $q_k r_\q^{-k}$ is almost $\nu_\q(k)$ and that $\nu_\q$ has regularly varying tails with index $\alpha-1/2$, we see that, informally, $\nu_\q(k)$ behaves as $k^{-\alpha-1/2}$, so finally $q_k W_\q^{(k)} \approx k^{-2\alpha-1}$ and so the root-face under $\P^\q$ has regularly varying tails with index $2\alpha > 2$. This can be made rigorous using similar arguments to~\cite[Proposition~4]{Curien-Richier:Duality_of_random_planar_maps_via_percolation}.

\section{Continuous labelled trees}
\label{sec:arbres_continus}

In this short section, we briefly describe the continuous limits of labelled size-conditioned Bienaymé--Galton--Watson trees, the statement and proof of such a convergence are given in Section~\ref{sec:limite_arbres_etiquetes}.

For the rest of this paper, we fix an admissible and critical sequence $\q$ such that its support generates the whole group $\Z$ and such that $\mu_\q$ defined by~\eqref{eq:loi_GW_carte_Boltzmann} belongs to the domain of attraction of a stable law with index $\alpha \in (1,2]$. Then there exists an increasing sequence $(B_n)_{n \ge 1}$ such that if $(\xi_n)_{n \ge 1}$ is a sequence of i.i.d. random variables sampled from $\mu_\q$, then $B_n^{-1} (\xi_1 + \dots + \xi_n - n)$ converges in distribution to a random variable $X^{(\alpha)}$ whose law is given by the Laplace exponent $\E[\exp(-\lambda X^{(\alpha)})] = \exp(\lambda^\alpha)$ for every $\lambda \ge 0$. Recall that $n^{-1/\alpha} B_n$ is slowly varying at infinity and that if $\mu_\q$ has variance $\sigma^2_\q \in (0,\infty)$, then this falls in the case $\alpha=2$ and we may take $B_n = (n \sigma^2_\q/2)^{1/2}$. We stress that with this normalisation, $X^{(2)}$ has the centred Gaussian law with variance $2$.

\subsection{The stable trees}
\label{sec:arbres_stables}

The continuous analog of size-conditioned Bienaymé--Galton--Watson trees are the so-called \emph{stable Lévy trees} with index $\alpha \in (1,2]$. Let $\Xexc = (\Xexc_t ; t \in [0,1])$ denote the normalised excursion of the $\alpha$-stable \emph{Lévy process} with no negative jump, whose value at time $1$ has the law of $X^{(\alpha)}$, and let further $\Hexc = (\Hexc_t ; t \in [0,1])$ be the associated \emph{height function}; we refer to e.g.~\cite{Duquesne:A_limit_theorem_for_the_contour_process_of_conditioned_Galton_Watson_trees} for the definitions of this object.
In the case $\alpha=2$, the two processes $\Xexc$ and $\Hexc$ are equal, both to $\sqrt{2}$ times the standard Brownian excursion. In any case, $\Hexc$ is a non-negative, continuous function, which vanishes only at $0$ and $1$. As any such function, it encodes a `continuous tree' called the $\alpha$-stable Lévy tree $\CRT$ of Duquesne, Le Gall \& Le Jan~\cite{Duquesne:A_limit_theorem_for_the_contour_process_of_conditioned_Galton_Watson_trees,Le_Gall-Le_Jan:Branching_processes_in_Levy_processes_the_exploration_process}, which generalises the celebrated Brownian tree of Aldous~\cite{Aldous:The_continuum_random_tree_3} in the case $\alpha=2$. Precisely, for every $s, t \in [0,1]$, set
\[d(s,t) = \Hexc_s + \Hexc_t - 2 \min_{r \in [\min(s, t), \max(s, t)]} \Hexc_r.\]
One easily checks that $d$ is a random pseudo-metric on $[0,1]$, we then define an equivalence relation on $[0,1]$ by setting $s \sim t$ whenever $d(s,t)=0$. Consider the quotient space $\CRT = [0,1] / \sim$, we let $\pi$ be the canonical projection $[0,1] \to \CRT$; then $d$ induces a metric on $\CRT$ that we still denote by $d$. The space $(\CRT, d)$ is a so-called compact real-tree, naturally rooted at $\pi(0) = \pi(1)$.

\subsection{The continuous distance process}
We construct next another process $\Lab = (\Lab_t ; t \in [0,1])$ called the \emph{continuous distance process} on the same probability space as $\Hexc$ which is intrinsically different according as wether $\alpha=2$ or $\alpha<2$. Let us start with the latter case which is analogous to the discrete setting. Indeed, in the discrete setting, the label increment between a vertex and is parent was given by the value of a random discrete bridge of length equal to the offspring of the parent, at a time given by the position of the child. Loosely speaking, we do the same when $\alpha < 2$, by taking random Brownian bridges.

Precisely, suppose that $\alpha < 2$ and that $(b_i)_{i \ge 1}$ are i.d.d. standard Brownian bridges of duration $1$ from $0$ to $0$, defined on the same probability space as $\Xexc$ and independent of the latter; by the scaling property, for every $x > 0$, the process $(x^{1/2} b_1(t/x) ; t \in [0,x])$ is a standard Brownian bridge of duration $x$. For every $0 \le s \le t \le 1$, put
\[\mathscr{I}_{s, t} = \inf_{r \in [s,t]} \Xexc_r.\]
For every $t \in (0,1)$ let $\Delta \Xexc_t = \Xexc_t - \Xexc_{t-} \ge 0$ be the `jump' of $\Xexc$ at time $t$ and let $(t_i)_{i \ge 1}$ be a measurable enumeration of those times $t$ such that $\Delta \Xexc_t > 0$. We then put for every $t \in [0,1]$:
\begin{equation}\label{eq:etiquettes_stables}
\Lab_t = \sqrt{2} \sum_{i \ge 1} \Delta \Xexc_{t_i}^{1/2} b_i\left(\frac{\mathscr{I}_{t_i, t} - \Xexc_{t-}}{\Delta \Xexc_{t_i}}\right) \ind{\mathscr{I}_{t_i, t} \ge \Xexc_{t-}} \ind{t_i \le t}.
\end{equation}
According to Le Gall \& Miermont~\cite[Proposition~5 and~6]{Le_Gall-Miermont:Scaling_limits_of_random_planar_maps_with_large_faces}, this series converges in $L^2$ and the process $\Lab$ admits a continuous modification, even H\"{o}lder continuous for any index smaller than $1/(2\alpha)$. The factor $\sqrt{2}$ is added here in the definition of $\Lab$ in order to have statements without constants.

When $\alpha=2$, the process $\Xexc$ is $\sqrt{2}$ times the Brownian excursion so it has continuous paths. To understand the definition, imagine that in the discrete setting, the tree $T_n$ is binary: internal vertices always have two children, then the label increment between such an internal vertex and its first child equals $-1$ or $1$ with probability $1/2$ each, and given a `typical' vertex, each of its ancestor is either the first or the second child of its parent, with probability roughly $1/2$ each, so the sequence of increments along an ancestral line resembles a centred random walk with step $-1$ or $1$ with probability $1/4$ each and $0$ with probability $1/2$. In the continuous setting of the Brownian tree, we define the process $\Lab$ conditional on $\Hexc$ as a centred Gaussian process satisfying for every $s,t \in [0,1]$,
\[\Esc{|\Lab_s - \Lab_t|^2}{\Hexc} = \frac{2}{3} \cdot d(s,t)
\quad\text{or, equivalently,}\quad
\Esc{\Lab_s \Lab_t}{\Hexc} = \frac{2}{3} \min_{r \in [\min(s, t), \max(s, t)]} \Hexc_r.\]
Again, the factor $2/3$ removes the constants in our statements and will be explained below.
This process is called the \emph{head of Brownian snake} driven by $\Hexc$~\cite{Le_Gall:Nachdiplomsvorlesung,Duquesne-Le_Gall:Random_trees_Levy_processes_and_spatial_branching_processes}; it is known, see, e.g.~\cite[Chapter~IV.4]{Le_Gall:Nachdiplomsvorlesung} that it admits a continuous version.

In all cases $\alpha \in (1,2]$, without further notice, we shall work throughout this paper with the continuous version of $\Lab$. Observe that, almost surely, $\Lab_0=0$ and $\Lab_s = \Lab_t$ whenever $s \sim t$ so $\Lab$ can be seen as a random motion indexed by $\CRT$ by setting $\Lab_{\pi(t)} = \Lab_t$ for every $t \in [0,1]$. We interpret $\Lab_x$ as the label of an element $x \in \CRT$; the pair $(\CRT, (\Lab_x; x \in \CRT))$ is a continuous analog of labelled plane trees.

\begin{rem}
We point out that, when $\alpha=2$, the process $\Lab$ is loosely speaking (up to constants) a Brownian motion indexed by the Brownian tree, which is denoted by $\Snake$ in~\cite{Marzouk:Scaling_limits_of_discrete_snakes_with_stable_branching}, but it is \emph{not} a Brownian motion indexed by the stable tree in the case $\alpha < 2$, this object is studied in~\cite{Marzouk:Scaling_limits_of_discrete_snakes_with_stable_branching}.
\end{rem}

\section{Scaling limits of labelled trees}
\label{sec:limite_arbres_etiquetes}

Throughout this section, we fix $A \subset \Z_+$ such that either $A$ or $\Z_+\setminus A$ is finite and $\mu_\q(A) \ne 0$ and for every $n \ge 1$, we let $T_{A,n}$ be a Bienaymé--Galton--Watson tree with offspring distribution $\mu_\q$ conditioned to have exactly $n$ vertices with offspring in $A$; recall that $\zeta(T_{A,n})$ denotes the number of edges of $T_{A,n}$. We then sample uniformly random labels $(\ell(u))_{u \in T_{A,n}}$ as in Section~\ref{sec:BGW}.

Duquesne~\cite{Duquesne:A_limit_theorem_for_the_contour_process_of_conditioned_Galton_Watson_trees} in the case $A = \Z_+$ (so $\zeta(T_{A,n}) = n-1$), see also Kortchemski~\cite{Kortchemski:A_simple_proof_of_Duquesne_s_theorem_on_contour_processes_of_conditioned_Galton_Watson_trees}, and then Kortchemski~\cite{Kortchemski:Invariance_principles_for_Galton_Watson_trees_conditioned_on_the_number_of_leaves} in the general case, proved the convergence of the {\L}ukasiewicz path and height process:
\begin{equation}\label{eq:Duquesne_Kortchemski}
\left(\frac{1}{B_{\zeta(T_{A,n})}} W_n(\zeta(T_{A,n}) t), \frac{B_{\zeta(T_{A,n})}}{\zeta(T_{A,n})} H_n(\zeta(T_{A,n}) t)\right)_{t \in [0,1]}
\cvloi (\Xexc_t, \Hexc_t)_{t \in [0,1]},
\end{equation}
in $\mathscr{D}([0,1], \R) \otimes \mathscr{C}([0,1], \R)$. Let us point out that the work~\cite{Kortchemski:Invariance_principles_for_Galton_Watson_trees_conditioned_on_the_number_of_leaves} focuses on the case $A=\{0\}$ and most of the results we shall need are developed in this case, but as explained in Section~8 there, the arguments extend to the general case, at least as long as either $A$ or its complement is finite.

\begin{rem}\label{rem:nombre_aretes_arbre}
As observed by Kortchemski~\cite{Kortchemski:Invariance_principles_for_Galton_Watson_trees_conditioned_on_the_number_of_leaves}, see e.g. Corollary~3.3 there for a stronger result, it holds that
\[n^{-1} \zeta(T_{A,n}) \cvproba \mu_\q(A)^{-1}.\]
Therefore, we may replace $\zeta(T_{A,n})$ by $\mu_\q(A)^{-1} n$ in~\eqref{eq:Duquesne_Kortchemski} above and Theorem~\ref{thm:cv_serpents_cartes} below. In the cases $A = \{0\}$ and $A = \N$, recall that $T_{A,n}$ is related to a Boltzmann map conditioned to have $n+1$ vertices and $n$ faces respectively. Since $\mu_\q(0) = Z_\q^{-1}$, this explains Remark~\ref{rem:nombre_aretes_carte}.
\end{rem}

As alluded in the introduction, the key to prove Theorem~\ref{thm:cv_cartes} is the following result.

\begin{thm}
\label{thm:cv_serpents_cartes}
The convergence in distribution
\[\left(B_{\zeta(T_{A,n})}^{-1/2} L_n(\zeta(T_{A,n}) t)\right)_{t \in [0,1]}
\cvloi (\Lab_t)_{t \in [0,1]},\]
holds in $\mathscr{C}([0,1], \R)$ jointly with~\eqref{eq:Duquesne_Kortchemski}.
\end{thm}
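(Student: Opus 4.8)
\textbf{Proof strategy for Theorem~\ref{thm:cv_serpents_cartes}.}
The plan is to prove convergence of the label process jointly with \eqref{eq:Duquesne_Kortchemski} by establishing convergence of finite-dimensional marginals and then tightness, working conditionally on the {\L}ukasiewicz path and height process. By the Skorokhod representation theorem I would first assume that the convergence \eqref{eq:Duquesne_Kortchemski} holds almost surely; the goal is then to show that the rescaled labels converge almost surely (or at least in probability) to $\Lab$ on the same probability space. The crucial structural fact, coming from Section~\ref{sec:BGW}, is that conditionally on $T_{A,n}$ the label of a vertex $u_k$ is a sum of independent bridge-increments along the ancestral line $\llbracket \varnothing, u_k \rrbracket$, one increment per internal ancestor; moreover Lemma~\ref{lem:codage_marche_Luka} lets one read off, from the {\L}ukasiewicz path alone, both the number of children $k_v$ of each ancestor $v$ and the rank of the relevant child, so the label is a deterministic functional of the {\L}ukasiewicz path and a family of i.i.d.\ centred variance-$2$ bridges.

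The key steps, in order, are: (1) \emph{One-point convergence.} Fix $t\in(0,1)$ and look at $\ell(u_{\lfloor \zeta(T_{A,n})t\rfloor})$. Conditionally on $W_n$, this is a sum over the ancestors $v$ of an independent centred variable which is a single increment of a uniform bridge of length $k_v$; its conditional variance is of order $\sum_v$ (something controlled by the jumps of $W_n$ seen along the ancestral line). Using the convergence of the rescaled {\L}ukasiewicz path to $\Xexc$ and of the rescaled height to $\Hexc$, one identifies the limiting conditional variance: in the case $\alpha<2$ the large jumps of $\Xexc$ along the ancestral spine contribute the Brownian-bridge terms of \eqref{eq:etiquettes_stables}, while in the case $\alpha=2$ there are no macroscopic jumps and a CLT/Lindeberg argument gives a Gaussian limit with conditional variance $\tfrac23 \Hexc_t$, matching the head of the Brownian snake. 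A conditional central limit theorem (respectively, a direct identification of the series \eqref{eq:etiquettes_stables} as the $L^2$-limit) then yields $B_{\zeta(T_{A,n})}^{-1/2} L_n(\zeta(T_{A,n})t)\to \Lab_t$. (2) \emph{Multi-point convergence.} For finitely many times $t_1<\dots<t_m$, decompose each ancestral line into the common part (up to the branch point) and the disjoint parts; the increments on disjoint parts are conditionally independent, so the joint limit law is read off from the tree structure, which converges because of \eqref{eq:Duquesne_Kortchemski}. (3) \emph{Tightness in $\mathscr{C}([0,1],\R)$.} Establish a Kolmogorov-type moment bound of the form $\Es{|L_n(\zeta t) - L_n(\zeta s)|^{p}} \le C\, B_{\zeta}^{p/2}\,|t-s|^{1+\delta}$ for suitable $p$; conditionally on $W_n$ the increment is again a sum of independent centred terms, so the Burkholder--Davis--Gundy inequality (abbreviated $\BDG$ in the paper) reduces this to controlling moments of the bracket, i.e.\ of $\sum_v k_v$-type quantities over the spine between $s$ and $t$, which one bounds using the regularly varying tails of $\mu_\q$ and standard estimates on the {\L}ukasiewicz excursion (of the kind used in \cite{Kortchemski:Invariance_principles_for_Galton_Watson_trees_conditioned_on_the_number_of_leaves}). (4) Combine (1)--(3): finite-dimensional convergence plus tightness gives convergence in $\mathscr{C}([0,1],\R)$, and since everything was built on the Skorokhod copy of \eqref{eq:Duquesne_Kortchemski}, the convergence is joint.

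\textbf{Main obstacle.} The hard part is step~(3) together with the $\alpha<2$ part of step~(1): one needs moment bounds on the label increments that are \emph{uniform} in $n$ and sharp enough to feed Kolmogorov's criterion, despite the heavy tails of the offspring law $\mu_\q$ (so the bridge lengths $k_v$ along the spine can be large). This requires precise control — via regular variation and the Markov property of the {\L}ukasiewicz walk — of how much the big jumps of $W_n$ contribute to the label variance between two nearby times, and a matching lower-tail estimate to exclude that a single macroscopic jump produces an atypically large label increment. Relatedly, identifying the limiting object in the $\alpha<2$ case requires showing that only the \emph{macroscopic} jumps survive in the scaling limit (the small jumps contribute negligibly in $L^2$), which is exactly the content of the series \eqref{eq:etiquettes_stables} and must be matched term by term with the discrete sum; making this rigorous (uniform integrability, negligibility of the small-jump part, continuity of the limit functional in $(\Xexc,\Hexc)$) is the technical heart of the section.
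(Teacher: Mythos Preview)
Your strategy is correct and matches the paper's approach closely: tightness via Kolmogorov plus finite-dimensional marginals, with the ancestral-line decomposition read off the {\L}ukasiewicz path. The paper actually proves tightness \emph{first} (using the Marcinkiewicz--Zygmund inequality rather than BDG, though the idea is the same), and there is one subtlety you do not mention: the two halves of the path $\llbracket u,v\rrbracket$ are asymmetric because $\ell(w1)\neq\ell(w)$ in general, so one side is controlled via a ``mirror tree'' trick and a separate bound~\eqref{eq:hauteur_Holder} on the branch length.

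The one genuine methodological difference is in the $\alpha=2$ finite-dimensional marginals. You propose a direct Lindeberg CLT at fixed times $t_1,\dots,t_m$. The paper instead proves convergence only at \emph{random} i.i.d.\ uniform times $U_1,\dots,U_k$ (Proposition~\ref{prop:marginal_etiquettes_cas_gaussien}), and then uses the already-established tightness to upgrade this to fixed times. The gain is that a uniform random vertex admits a clean spinal decomposition against Kesten's size-biased tree $T_\infty$ (equation~\eqref{eq:epine}), so the ancestral offspring sizes become i.i.d.\ with the size-biased law and the Lindeberg-type computation~\eqref{eq:moyenne_accroissements_etiquettes_cas_gaussien} goes through by direct moment calculation. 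Your direct approach at fixed $t$ would require controlling the empirical offspring distribution along the ancestral line of the specific vertex $u_{\lfloor\zeta t\rfloor}$, which is doable but messier; the paper's route trades that for a soft argument using equicontinuity. Both approaches are valid, but the paper's avoids some bookkeeping.
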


The proof of the convergence of $L_n$ occupies the rest of this section. We first prove that it is tight and then we characterise the finite dimensional marginals, using two different arguments for the non-Gaussian case $\alpha < 2$ and the Gaussian case $\alpha = 2$, since the limit process $\Lab$ is defined in two different ways. Indeed, let us comment on this statement and on the constants in the definition of $\Lab$. We assume $A = \Z_+$ to ease the notation in this informal discussion.

For a vertex $u \in T_{A,n}$, the label increments between consecutive ancestors are independent and distributed as $X_{k,j}$ when an ancestor has $k \ge 1$ children and the one on the path to $u$ is the $j$-th one, where $(X_{k,1}, \dots, X_{k,k})$ is uniformly distributed in $\mathcal{B}_k^+$, as defined in \eqref{eq:pont_sans_saut_negatif}. Since the latter has the law of a random walk conditioned to be at $0$ at time $k$, with step distribution $\sum_{k \ge -1} 2^{-k-2} \delta_k$ which is centred and with variance $2$, then a conditional version of Donsker's invariance principle for random bridges (see e.g.~\cite[Lemma~10]{Bettinelli-Scaling_limits_for_random_quadrangulations_of_positive_genus} for a detailed proof of the latter) yields
\begin{equation}\label{eq:Donsker_ponts}
\left((2k)^{-1/2} X_{k, kt}\right)_{t \in [0,1]} \cvloi (b_t)_{t \in [0,1]},
\end{equation}
where, as usual, on the left we have linearly interpolated, and $b$ is the standard Brownian bridge. The factor $\sqrt{2}$ is the same as in the definition of $\Lab$ for $\alpha < 2$ in~\eqref{eq:etiquettes_stables} and one must check that the $k$'s and $j$'s converge towards the $\Delta \Xexc_{t_i}$'s and the $\mathscr{I}_{t_i, t} - \Xexc_{t-}$'s. 

In the case $\alpha=2$, suppose furthermore that the variance $\sigma_\q^2$ of $\mu_q$ is finite, so $B_n = (n \sigma_\q^2 / 2)^{1/2}$. Since $X_{k,j}$ has variance $2j(k-j)/(k+1)$ and, as we will see, there is typically a proportion about $\mu_\q(k)$ of such ancestors, then $\ell(u)$ has variance about
\[\sum_{k \ge 1} \sum_{j=1}^k |u| \mu_\q(k) \frac{2j(k-j)}{k+1}
= |u| \sum_{k \ge 1} \mu_\q(k) \frac{k(k-1)}{3}
\approx |u| \frac{\sigma_\q^2}{3}.\]
If $u$ is the vertex visited at time $\lfloor n t\rfloor$ in lexicographical order, then, by~\eqref{eq:Duquesne_Kortchemski} we have $|u| \approx (n/B_{n}) \Hexc_t = (2n /\sigma_\q^2)^{1/2}\Hexc_t$ so we expect $L_n(n t)$, once divided by $B_{n}^{1/2} = (n \sigma_\q^2 / 2)^{1/4}$, to be asymptotically Gaussian with variance
\[\left(\frac{2}{n \sigma_\q^2}\right)^{1/2} \left(\frac{2n}{\sigma_p^2}\right)^{1/2} \Hexc_t \frac{\sigma_p^2}{3}
= \frac{2}{3} \Hexc_t,\]
Which exactly corresponds to $\Lab_t$. The case $\alpha=2$ but $\mu_\q$ has infinite variance is more involved, but this sketch can be adapted, by taking the truncated variance.

\subsection{Tightness of the label process}
\label{sec:tension_labels}

The first step towards the proof of Theorem~\ref{thm:cv_serpents_cartes} is to show that the sequence of processes
\[\left(B_{\zeta(T_{A,n})}^{-1/2} L_n(\zeta(T_{A,n}) t)\right)_{t \in [0,1]}\]
is tight. This was proved in~\cite[Proposition 7]{Marzouk:Scaling_limits_of_random_bipartite_planar_maps_with_a_prescribed_degree_sequence} in a slightly different context of trees `with a prescribed degree sequence' in a finite variance regime but the argument are easily adapted to our case. 
The main point is to apply Kolmogorov's tightness criterion; thanks to the properties of uniform random bridges in $\mathcal{B}_k^+$, we can see that the increment of labels between a vertex $u$ and one of its ancestors $v$ is about the square-root of the numbers of vertices branching off of the path $\llbracket u, v \llbracket$, which can be described in terms of the {\L}ukasiewicz path. We thus shall need later the following tail bounds.

\begin{lem}\label{lem:moments_marche_Luka}
Fix any $\theta \in (0, 1/\alpha)$. There exists $c_1, c_2 > 0$ such that for every $n$ large enough, for every $0 \le s \le t \le 1$, every $x \ge 0$, and every $\delta \in (0, \alpha/(\alpha-1))$, we have
\[\Pr{W_n(\zeta(T_{A,n}) s) - \min_{s \le r \le t} W_n(\zeta(T_{A,n}) r) > B_{\zeta(T_{A,n})} |t-s|^\theta x} \le c_1 \exp(- c_2 x^\delta).\]
Consequently, the moments of $B_{\zeta(T_{A,n})}^{-1} |t-s|^{-\theta} (W_n(\zeta(T_{A,n}) s) - \min_{s \le r \le t} W_n(\zeta(T_{A,n}) r))$ are uniformly bounded.
\end{lem}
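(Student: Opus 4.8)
The plan is to reduce the statement to a known tail bound on the running supremum of the (unconditioned) {\L}ukasiewicz path, and then transfer it to the conditioned tree via an absolute-continuity or cycle-lemma argument. First I recall that the {\L}ukasiewicz path $W_n$ of $T_{A,n}$ is, up to the conditioning, a centred random walk with step distribution $\mu_\q(\cdot+1)$, which is in the domain of attraction of a spectrally positive $\alpha$-stable law with norming sequence $B_n$. The quantity $W_n(\zeta s)-\min_{s\le r\le t}W_n(\zeta r)$ is the height, above its future infimum on $[\zeta s,\zeta t]$, of the walk at time $\zeta s$; for the unconditioned walk this is $\sup_{0\le j\le \zeta(t-s)}(-\widetilde W(j))$ for a walk $\widetilde W$ started afresh at time $\zeta s$, i.e. the maximum of $-\widetilde W$ over an interval of length $\asymp \zeta|t-s|$. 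So the core estimate I need is: for a centred random walk $S$ in the domain of attraction of the $\alpha$-stable law, $\Pr{\sup_{j\le m}(-S_j) > B_m^{(\mathrm{unnorm})}\,y}\le c_1\exp(-c_2 y^\delta)$ for $\delta<\alpha/(\alpha-1)$, where here the norming should be $B_{\zeta}|t-s|^\theta$ with $\theta<1/\alpha$ rather than $B_{\zeta|t-s|}$.

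The key steps, in order. (1) State and prove (or cite, e.g.\ from Kortchemski's work on conditioned trees, or from standard large-deviation estimates for stable walks such as those in \cite{Kortchemski:Invariance_principles_for_Galton_Watson_trees_conditioned_on_the_number_of_leaves}) the stretched-exponential tail bound for $\sup_{j\le m}(-S_j)$ of the unconditioned walk, at scale $B_m$; the exponent $\delta<\alpha/(\alpha-1)$ is exactly the one coming from the spectrally positive Cramér-type estimate for the left tail of a stable-domain walk, since negative deviations of a no-negative-jump walk decay like $\exp(-c y^{\alpha/(\alpha-1)})$. (2) Convert the scale $B_m$ with $m\asymp\zeta|t-s|$ into the scale $B_\zeta|t-s|^\theta$: since $B_m/B_\zeta\sim (m/\zeta)^{1/\alpha}L(\cdots)$ is regularly varying of index $1/\alpha$, and $\theta<1/\alpha$, we have $B_{\zeta|t-s|}\le C B_\zeta|t-s|^\theta$ for all $|t-s|\le 1$ and $n$ large, uniformly in $s,t$; this is where the loss from $1/\alpha$ down to $\theta$ is spent, and it handles small $|t-s|$. (3) Remove the conditioning. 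Here I use that $\zeta(T_{A,n})/n\to\mu_\q(A)^{-1}$ in probability (Remark~\ref{rem:nombre_aretes_arbre}), so on an event of overwhelming probability $\zeta$ is of order $n$; and then, by the standard absolute-continuity of the conditioned {\L}ukasiewicz excursion with respect to the unconditioned walk on initial segments — or, cleaner, by a union bound over the starting time $\zeta s$ combined with the cycle lemma as in \cite{Kortchemski:Invariance_principles_for_Galton_Watson_trees_conditioned_on_the_number_of_leaves} — the excursion-conditioned probability is bounded by a constant times the unconditioned one, up to adjusting $c_1,c_2$. The supremum over $s\le r\le t$ being controlled uniformly is handled because the bound from (1)–(2) already holds for the whole path, not a single time. (4) Finally deduce the moment bound: $\Es{Y^p}=\int_0^\infty p y^{p-1}\Pr{Y>y}\,dy\le \int_0^\infty p y^{p-1} c_1\e^{-c_2 y^\delta}\,dy<\infty$ for every $p$, uniformly in $n,s,t$, since the tail bound is uniform.

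The main obstacle I expect is step (3): making the passage from the unconditioned walk to the walk conditioned to be an excursion of length $\zeta(T_{A,n})$ — which is itself random when $A\ne\Z_+$ — fully rigorous and uniform in $s,t$. When $A=\Z_+$ the length is deterministic and one can invoke the classical comparison between a bridge/excursion and the free walk on an initial fraction of the path (losing only a bounded factor as long as one stays away from the final $\varepsilon n$ steps), plus time-reversal to handle the final part; for general $A$ one must first condition on $\zeta(T_{A,n})$, use its concentration around $\mu_\q(A)^{-1}n$, and then apply the same comparison, which is essentially the strategy already set up in \cite{Marzouk:Scaling_limits_of_random_bipartite_planar_maps_with_a_prescribed_degree_sequence} and \cite{Kortchemski:Invariance_principles_for_Galton_Watson_trees_conditioned_on_the_number_of_leaves}; the bookkeeping to keep everything uniform over $0\le s\le t\le 1$ (in particular near $t=1$, where the future infimum is pinned at $-1$) is the delicate point, but it is routine given those references.
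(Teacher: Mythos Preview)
Your proposal is correct and follows essentially the same route as the paper: reduce to the unconditioned walk via Kortchemski's stretched-exponential tail bound (your step~(1)), convert scales $B_{\zeta|t-s|}\to B_\zeta|t-s|^\theta$ via Potter bounds on the slowly varying part (your step~(2)), and then undo the conditioning by passing excursion~$\to$~bridge via the cyclic shift (Vervaat/cycle lemma) and bridge~$\to$~free walk via absolute continuity on the first half, with the general~$A$ case handled by Kortchemski's construction of the bridge conditioned on the time of the $n$-th jump in $A-1$ (your step~(3)). The paper carries out exactly these steps, including the restriction to $|t-s|\le 1/2$ and the observation that $\varsigma_{A,n}/n\to\mu_\q(A)^{-1}$ to replace the random stopping time by a deterministic multiple of~$n$; your identification of step~(3) as the only genuinely delicate point matches the emphasis there.
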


\begin{proof}
First note that we may restrict ourselves to times $|t-s| \le 1/2$. Let us start with the more familiar case $A=\Z_+$. It is well-known that $W_n$ is an excursion of a random walk $S$ with i.i.d. steps distributed as $\sum_{k \ge -1} \mu_\q(k+1) \delta_k$ in the sense that we condition the path to hit $-1$ for the first time at time $n+1$. Moreover, such an excursion can be obtained by cyclicly shifting a bridge $S_n$ of this walk (i.e. conditioning the walk to be at $-1$ at time $n+1$, but without the positivity constraint) at the first time it realises its overall minimum, see e.g. Figure~6 in~\cite{Marzouk:Scaling_limits_of_discrete_snakes_with_stable_branching}; this operation is called a discrete Vervaat transform, see e.g. Pitman~\cite[Chapter~6.1]{Pitman:Combinatorial_stochastic_processes} for details.
Our claim holds when $W_n$ is replaced by $S$, in which case we may take $s=0$; indeed, according to Kortchemski~\cite[Proposition 8]{Kortchemski:Sub_exponential_tail_bounds_for_conditioned_stable_Bienayme_Galton_Watson_trees}, it holds that
\[\Pr{S(n s) - \min_{s \le r \le t} S(n r) > u B_{n |t-s|}} \le c_1 \exp(- c_2 u^\delta).\]
Since $n^{-1/\alpha} B_n$ is slowly varying at infinity, the so-called Potter bounds (see e.g.\cite[Lemma~4.2]{Bjornberg-Stefansson:Random_walk_on_random_infinite_looptrees} or~\cite[Equation~9]{Kortchemski:Sub_exponential_tail_bounds_for_conditioned_stable_Bienayme_Galton_Watson_trees}) assert that for every $\varepsilon > 0$, there exists a constant $c$ depending only on $\varepsilon$ such that for every $n$ large enough,
\[\frac{(n |t-s|)^{-1/\alpha} B_{n |t-s|}}{n^{-1/\alpha} B_{n}} \le c \cdot |t-s|^{-\varepsilon},\]
and so
\[\Pr{S(n s) - \min_{s \le r \le t} S(n r) > c \cdot u \cdot |t-s|^{-\varepsilon+1/\alpha} \cdot B_n} \le c_1 \exp(- c_2 u^\delta).\]
One can then transfer this bound to $S_n$; an argument based on the Markov property of $S$ indeed results in an absolute continuity between the first $n/2$ steps of $S$ and of $S_n$, see e.g.~\cite{Kortchemski:Sub_exponential_tail_bounds_for_conditioned_stable_Bienayme_Galton_Watson_trees}, near the end of the proof of Theorem~9 there. Finally, we can transfer this bound from $S_n$ to $W_n$ using the preceding construction from a cyclic shift, see e.g. the end of the proof of Equation~7 in~\cite{Marzouk:Scaling_limits_of_discrete_snakes_with_stable_branching}.

In the case $A=\{0\}$, the construction of $W_n$ from a bridge $S_n$ is discussed by Kortchemski~\cite[Section~6.1]{Kortchemski:Invariance_principles_for_Galton_Watson_trees_conditioned_on_the_number_of_leaves}, and as discussed in Section~8 there, and it extends \emph{mutatis mutandis} to the general case $A$ either finite or co-finite. Here, the bridge $S_n$ is obtained by conditioning the walk $S$ the be at $-1$ after its $n$-th jump in the set $A-1$. Therefore, it suffices again to prove our claim when $W_n$ is replaced by $S_n$ and $|t-s| < 1/2$. Again, we may cut the path of $S_n$ at the time it realises its $(n/2)$-th jump in the set $A-1$ and this path is absolutely continuous with respect to that of the unconditioned walk $S$ cut at the analogous stopping time. This follows from the same argument as alluded above, appealing to the strong Markov property. So finally, we have reduced our claim to showing that it holds when $W_n$ is replaced by $S$, when $s=0$, and $\zeta(T_{A,n})$ is replaced by the time of the $n$-th jump of $S$ in the set $A-1$. This random time, divided by $n$ converges almost surely towards $\mu_q(A)$, see e.g. \cite[Lemma~6.2]{Kortchemski:Invariance_principles_for_Galton_Watson_trees_conditioned_on_the_number_of_leaves} so we may replace it by $(1\pm \gamma) \mu_q(A) n$ with a given $\gamma \in (0,1)$ and conclude from the previous bound on $S$ (it only affects the constants).
\end{proof}

We next turn to the proof of tightness of the label process. Recall that we may replace $B_{\zeta(T_{A,n})}$ by $B_n$. Our argument closely follows the proof of Proposition~7 in~\cite{Marzouk:Scaling_limits_of_random_bipartite_planar_maps_with_a_prescribed_degree_sequence}.

\begin{proof}
[Proof of the tightness in Theorem~\ref{thm:cv_serpents_cartes}]
Fix $q > \frac{2\alpha}{\alpha-1}$ and $\beta \in (1, \frac{q(\alpha-1)}{2\alpha})$. We aim at showing that for every $n$ large enough, for every pair $0 \le s \le t \le 1$, it holds that
\begin{equation}\label{eq:moments_labels}
\Es{|L_n(\zeta(T_{A,n}) s) - L_n(\zeta(T_{A,n}) t)|^q} \le C(q) \cdot B_n^{q/2} \cdot |t-s|^{\beta},
\end{equation}
where here and in all this proof, $C(q)$ stands for some constant, which will vary from one equation to the other, which depends on $q$, $\beta$, and the offspring distribution, but not on $n$ nor $s$ nor $t$. Tightness follows from~\eqref{eq:moments_labels} appealing to the standard Kolmogorov's criterion.

Without loss of generality, we may, and do, restrict to those times $s$ and $t$ such that $|t-s| \le 1/2$ and both $\zeta(T_{A,n}) s$ and $\zeta(T_{A,n}) t$ are integers. Let us then denote by $u$ and $v$ the vertices corresponding to the times $\zeta(T_{A,n}) s$ and $\zeta(T_{A,n}) t$ respectively in lexicographical order, so $L_n(\zeta(T_{A,n}) s) - L_n(\zeta(T_{A,n}) t) = \ell(u)-\ell(v)$. Let $u \wedge v$, be the most recent common ancestor of $u$ and $v$ and further $\hat{u}$ and $\hat{v}$ be the children of $u \wedge v$ which are respectively ancestor of $u$ and $v$. We stress that $u$ and $v$ correspond to deterministic times, whereas $u \wedge v$, $\hat{u}$ and $\hat{v}$ correspond to random times which are measurable with respect to $T_{A,n}$. We write:
\[\ell(u) - \ell(v) = \left(\sum_{w \in \mathopen{\rrbracket} \hat{u}, u \mathclose{\rrbracket}} \ell(w) - \ell(pr(w))\right) + (\ell(\hat{u}) - \ell(\hat{v})) + \left(\sum_{w \in \mathopen{\rrbracket} \hat{v}, v \mathclose{\rrbracket}} \ell(pr(w)) - \ell(w)\right).\]

Recall the notation $1 \le \chi_{\hat{u}} \le \chi_{\hat{v}} \le k_{u \wedge v}$ for the relative position of $\hat{u}$ and $\hat{v}$ among the children of $u \wedge v$. By construction of the labels on $T_{A,n}$, conditional on the tree, the difference $\ell(\hat{u}) - \ell(\hat{v})$ is distributed as $X_{p,i} - X_{p,j}$ with $p = k_{u \wedge v}$, $i = \chi_{\hat{u}}$ and $j = \chi_{\hat{v}}$ and where $X_p$ has the uniform distribution on the set of bridges with no-negative jumps $\mathcal{B}_p^+$. According to Le Gall \& Miermont~\cite[Lemma~1]{Le_Gall-Miermont:Scaling_limits_of_random_planar_maps_with_large_faces}, we thus have
\[\Esc{\left|\ell(\hat{u}) - \ell(\hat{v})\right|^q}{T_{A,n}} \le C(q) \cdot (\chi_{\hat{v}} - \chi_{\hat{u}})^{q/2}.\]
Next, fix $w \in \mathopen{\rrbracket} \hat{u}, u \mathclose{\rrbracket}$, since $\ell(pr(w)) = \ell(pr(w) k_{pr(w)})$, similarly, we have
\[\Esc{|\ell(w) - \ell(pr(w))|^q}{T_{A,n}}
\le C(q) \cdot (k_{pr(w)} - \chi_w)^{q/2},\]
and, for every $w \in \mathopen{\rrbracket} \hat{v}, v \mathclose{\rrbracket}$,
\[\Esc{|\ell(pr(w)) - \ell(w)|^q}{T_{A,n}}
\le C(q) \cdot \chi_w^{q/2}.\]

It was argued in~\cite[Equation~20]{Marzouk:Scaling_limits_of_random_bipartite_planar_maps_with_a_prescribed_degree_sequence}, appealing to the so-called Marcinkiewicz--Zygmund inequality, that if $Y_1, \dots, Y_m$ are independent and centred random variables which admit a finite $q$-th moment, then
\[\Es{\left|\sum_{i=1}^m Y_i\right|^q}
\le C(q) \cdot \left(\sum_{i=1}^m \Es{\left|Y_i\right|^q}^{2/q}\right)^{q/2}.\]
In our context, this reads
\begin{align}\label{eq:tension_branches_gauche_droite}
\Esc{|\ell(u) - \ell(v)|^q}{T_{A,n}}
&\le C(q) \cdot 
\left(
\sum_{w \in \mathopen{\rrbracket} \hat{u}, u \mathclose{\rrbracket}} (k_{pr(w)} - \chi_w)
+ (\chi_{\hat{v}} - \chi_{\hat{u}})
+ \sum_{w \in \mathopen{\rrbracket} \hat{v}, v \mathclose{\rrbracket}} \chi_w
\right)^{q/2}\nonumber
\\
&\le C(q) \cdot 
\left(
\left(\sum_{w \in \mathopen{\rrbracket} \hat{u}, u \mathclose{\rrbracket}} (k_{pr(w)} - \chi_w) + (\chi_{\hat{v}} - \chi_{\hat{u}})\right)^{q/2}
+ \left(\sum_{w \in \mathopen{\rrbracket} \hat{v}, v \mathclose{\rrbracket}} \chi_w\right)^{q/2}
\right).
\end{align}

Let us first consider the first term in~\eqref{eq:tension_branches_gauche_droite}. Appealing to Lemma~\ref{lem:codage_marche_Luka}, we have
\[\chi_{\hat{v}} - \chi_{\hat{u}}
= W_n(\hat{u}) - W_n(\hat{v}),\]
and similarly, for every $w \in \mathopen{\rrbracket} \hat{u}, u \mathclose{\rrbracket}$,
\[k_{pr(w)} - \chi_w
= W_n(w) - W_n(pr(w) k_{pr(w)})
= W_n(w k_w) - W_n(pr(w) k_{pr(w)}),\]
so
\[\sum_{w \in \mathopen{\rrbracket} \hat{u}, u \mathclose{\rrbracket}} (k_{pr(w)} - \chi_w) + (\chi_{\hat{v}} - \chi_{\hat{u}})
= W_n(u) - W_n(\hat{v})
= W_n(u) - \inf_{[u, v]} W_n.\]
Then Lemma~\ref{lem:moments_marche_Luka} applied with $\theta = 2\beta/q < 1/\alpha$ yields
\[\Es{\left(\sum_{w \in \mathopen{\rrbracket} \hat{u}, u \mathclose{\rrbracket}} (k_{pr(w)} - \chi_w) + (\chi_{\hat{v}} - \chi_{\hat{u}})\right)^{q/2}}
\le C(q) \cdot B_n^{q/2} \cdot |t-s|^\beta.\]

We next focus on the second term in~\eqref{eq:tension_branches_gauche_droite}. 
We would like to proceed symmetrically but there is a technical issue: on the branch $\mathopen{\rrbracket} \hat{u}, u \mathclose{\rrbracket}$, we relied on the fact that $\ell(wk_w) = \ell(w)$ in order to only count the number of vertices branching off of this path \emph{strictly} to the right, but this is not the case on $\mathopen{\rrbracket} \hat{v}, v \mathclose{\rrbracket}$: we do not have $\ell(w1)=\ell(w)$ in general so we must also count the vertices \emph{on} this path.
Let $T_{A,n}^-$ be the `mirror image' of $T_{A,n}$, i.e. the tree obtained from $T_{A,n}$ by flipping the order of the children of every vertex; let us write $w^- \in T_{A,n}^-$ for the mirror image of a vertex $w \in T_{A,n}$; make the following observations:
\begin{enumerate}
\item $T_{A,n}^-$ has the same law as $T_{A,n}$, so in particular, their {\L}ukasiewicz paths have the same law;
\item for every $w \in \mathopen{\rrbracket} \hat{v}, v \mathclose{\rrbracket}$, the quantity $\chi_w-1$ in $T_{A,n}$ corresponds to the quantity $k_{pr(w^-)} - \chi_{w^-}$ in $T_{A,n}^-$;
\item the lexicographical distance between the last descendant in $T_{A,n}^-$ of respectively $\hat{v}^-$ and $v^-$ is smaller than the lexicographical distance between $\hat{v}$ and $v$ in $T_{A,n}$ (the elements of $ \mathopen{\rrbracket} \hat{v}, v \mathclose{\rrbracket} = \mathopen{\rrbracket} \hat{v}^-, v^- \mathclose{\rrbracket}$ are missing). 
\end{enumerate}
With theses observations, the previous argument used to control the branch $\mathopen{\rrbracket} \hat{u}, u \mathclose{\rrbracket}$ shows that
\[\Es{\left(\sum_{w \in \mathopen{\rrbracket} \hat{v}, v \mathclose{\rrbracket}} (\chi_w-1)\right)^{q/2}}
\le C(q) \cdot B_n^{q/2} \cdot |t-s|^\beta.\]
Finally, as proved recently (for the conditioning $A = \Z_+$ but the general case follows similarly) in~\cite{Marzouk:Scaling_limits_of_discrete_snakes_with_stable_branching}: for every $\gamma < (\alpha-1)/\alpha$,
\begin{equation}\label{eq:hauteur_Holder}
\Es{\# \mathopen{\rrbracket} \hat{v}, v \mathclose{\rrbracket}^{q/2}} \le C(q) \cdot \left(\frac{B_{\zeta(T_{A,n})}}{\zeta(T_{A,n})}\right)^{q/2} \cdot |t-s|^{\gamma q/2},
\end{equation}
which is smaller than the bound we are looking for; indeed, since we assume that both $\zeta(T_{A,n}) s$ and $\zeta(T_{A,n}) t$ are integers, then $\zeta(T_{A,n})^{-q/2} \le |t-s|^{q/2} \le 1$ and $\gamma$ can be chosen close enough to $(\alpha-1)/\alpha$ to ensure that $|t-s|^{(\gamma+1) q/2} \le |t-s|^\beta$.
\end{proof}

Let us mention that we have hidden the technical difficulties in~\eqref{eq:hauteur_Holder}. Nevertheless, there is a different argument which does not necessitate any control on the length of the branches. Indeed the bound~\eqref{eq:hauteur_Holder} answers in this context of size-conditioned Bienaymé--Galton--Watson trees Remark~3 in~\cite{Marzouk:Scaling_limits_of_random_bipartite_planar_maps_with_a_prescribed_degree_sequence} on trees with a prescribed degree sequence. We could have argued instead as in the proof of Proposition~7 there that, if $\chi_w \ge 2$, then $\chi_w \le 2 (\chi_w-1)$, so in order to control the moments of $\sum_{w \in \mathopen{\rrbracket} \hat{v}, v \mathclose{\rrbracket}} \chi_w$, it suffices to bound those of $\#\{w \in \mathopen{\rrbracket} \hat{v}, v \mathclose{\rrbracket} : \chi_w = 1\}$. But according to~\cite[Lemma~2]{Marzouk:Scaling_limits_of_discrete_snakes_with_stable_branching} (which recasts~\cite[Corollary~3]{Marzouk:Scaling_limits_of_random_bipartite_planar_maps_with_a_prescribed_degree_sequence} in the context of size-conditioned Bienaymé--Galton--Watson trees), with high probability, uniformly for all pair of vertices $\hat{v},v$ such that $\hat{v}$ is an ancestor of $v$,%
\footnote{And the path $\llbracket \hat{v}, v\rrbracket$ has length at least of order $\ln n$, but shorter paths do not cause any issue.}
there is a proportion at most $1-\mu_\q(0)/2 < 1$ of individuals $w \in \mathopen{\rrbracket} \hat{v}, v \mathclose{\rrbracket}$ such that $\chi_w = 1$. Then the bound~\eqref{eq:moments_labels} holds under the conditional expectation with respect to this event, so tightness of the label process holds conditional on this event, and so also unconditionally.

\subsection{Finite dimensional marginals in the non-Gaussian case}

In this subsection, we assume that $\alpha < 2$, and prove the following result which, together with the tightness obtained in the preceding subsection, concludes the proof of Theorem~\ref{thm:cv_serpents_cartes} in this case.

\begin{prop}
For every $k \ge 1$ and every $0 \le t_1 < \dots < t_k \le 1$, it holds that
\[\left(B_{\zeta(T_{A,n})}^{-1/2} L_n(\lfloor \zeta(T_{A,n}) t_i\rfloor)\right)_{1 \le i \le k}
\cvloi (\Lab_{t_i})_{1 \le i \le k},\]
jointly with~\eqref{eq:Duquesne_Kortchemski}.
\end{prop}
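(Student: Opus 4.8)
The plan is to establish convergence of the finite-dimensional marginals of the label process by identifying, for each fixed time $t_i$, the decomposition of $L_n(\lfloor \zeta(T_{A,n}) t_i \rfloor)$ along the ancestral line of the corresponding vertex $u_i$ in terms of uniform bridges, and then passing to the limit using the convergence~\eqref{eq:Duquesne_Kortchemski} of the {\L}ukasiewicz path. Recall that, conditionally on the tree $T_{A,n}$, the label $\ell(u_i)$ is a sum of independent increments, one for each strict ancestor $w$ of $u_i$ that is an internal vertex: if $w$ has $k_w$ children and the ancestor of $u_i$ among them is the $j$-th one, the increment is $X_{k_w, j}$ where $(X_{k_w,1}, \dots, X_{k_w, k_w})$ is uniform in $\mathcal{B}_{k_w}^+$. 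By Lemma~\ref{lem:codage_marche_Luka}, the relevant quantities $k_w$ and $j$ are read off from the jumps of the {\L}ukasiewicz path $W_n$ along the ancestral line; more precisely, the parameter governing the $i$-th increment is the size of the jump of $W_n$ at the corresponding ancestor and the position within that jump is controlled by a running infimum of $W_n$ between the ancestor's time and $\lfloor \zeta(T_{A,n}) t_i \rfloor$. This is exactly the discrete analogue of formula~\eqref{eq:etiquettes_stables}: the big jumps of $W_n$ converge to the big jumps $\Delta \Xexc_{s}$ of $\Xexc$, their positions to the times $t_j$, and the running-infimum quantities to $\mathscr{I}_{s,t} - \Xexc_{t-}$.

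The key steps, in order, would be the following. First I would fix $t = t_i$ and, using~\eqref{eq:Duquesne_Kortchemski} together with the Skorokhod representation theorem, work on a probability space where $B_{\zeta(T_{A,n})}^{-1} W_n(\zeta(T_{A,n}) \cdot)$ converges almost surely to $\Xexc$ in the Skorokhod sense. Second, I would isolate the contribution to $\ell(u_i)$ coming from the ``large'' jumps of $W_n$, i.e. those exceeding $\varepsilon B_{\zeta(T_{A,n})}$ for a small threshold $\varepsilon > 0$: there are only finitely many such jumps, their locations and sizes converge to those of $\Xexc$, and by the conditional version of Donsker's invariance principle for bridges~\eqref{eq:Donsker_ponts}, the rescaled increment $B_{\zeta(T_{A,n})}^{-1/2} X_{k_w, j}$ associated with such a jump converges to $\sqrt{2} \, \Delta \Xexc_s^{1/2} b\big((\mathscr{I}_{s,t} - \Xexc_{t-})/\Delta \Xexc_s\big)$ for a fresh independent Brownian bridge $b$ — the independence across distinct large jumps being inherited from the conditional independence of the bridges $X_{k_w}$. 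This yields convergence of the large-jump part of $B_{\zeta(T_{A,n})}^{-1/2} L_n(\lfloor \zeta(T_{A,n}) t \rfloor)$ to the truncated series defining $\Lab_t$. Third, I would show the small-jump remainder is negligible in $L^2$ uniformly in $n$ as $\varepsilon \to 0$: conditionally on the tree, the variance of the remainder is bounded by a constant times $B_{\zeta(T_{A,n})}^{-1}$ times the sum over small-jump ancestors $w$ on the ancestral line of $k_w$ (since $X_{k,j}$ has variance at most of order $k$), and this sum is itself controlled by the running infimum estimate of Lemma~\ref{lem:moments_marche_Luka} restricted to small jumps, which goes to $0$ as $\varepsilon \to 0$ by the same computation that shows the $L^2$ convergence of the series~\eqref{eq:etiquettes_stables} in~\cite[Propositions~5 and~6]{Le_Gall-Miermont:Scaling_limits_of_random_planar_maps_with_large_faces}. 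Fourth, I would upgrade from one time to the joint statement over $t_1 < \dots < t_k$: the ancestral lines of $u_1, \dots, u_k$ share common segments (up to pairwise most recent common ancestors), and the increments along shared segments are literally the same random variables, so the joint limit is read off from the structure of $\Xexc$ exactly as in~\eqref{eq:etiquettes_stables} — here I would invoke the analysis already carried out by Le Gall \& Miermont, whose arguments transfer once the one-dimensional convergence and the tree structure are in hand. Finally I would note the joint convergence with~\eqref{eq:Duquesne_Kortchemski} is automatic since everything was realised on the Skorokhod space.

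The main obstacle I anticipate is making rigorous the matching between the discrete jumps of $W_n$ and the jumps of $\Xexc$, uniformly enough to control the positions $j/k_w$ within each large jump. The subtlety is that $\mathscr{I}_{s,t} = \Xexc_{t-}$ can occur, i.e. the running infimum can be attained exactly at the relevant jump, in which case small perturbations of $W_n$ can move the discrete analogue of $(\mathscr{I}_{t_j, t} - \Xexc_{t-})/\Delta \Xexc_{t_j}$ across the boundary of $[0,1]$; one needs that almost surely, for the fixed times $t_i$, such degenerate coincidences do not happen, which follows from properties of the stable excursion (the set of times where $\Xexc$ attains a past infimum has Lebesgue measure zero, and $t_i$ is deterministic). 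A related, more routine difficulty is that the height $|u_i| = H_n(\lfloor \zeta(T_{A,n}) t_i \rfloor)$ of the vertex grows like $(\zeta(T_{A,n})/B_{\zeta(T_{A,n})}) \Hexc_{t_i}$, so there are infinitely many ancestors in the limit; the large/small jump truncation is precisely what tames this, and the quantitative small-jump bound from Lemma~\ref{lem:moments_marche_Luka} is what closes the argument. In the Gaussian case $\alpha = 2$ this whole scheme degenerates (there are no macroscopic jumps) and is replaced by the separate Gaussian computation sketched after the statement of Theorem~\ref{thm:cv_serpents_cartes}, which is why that case is treated on its own.
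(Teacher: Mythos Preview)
Your proposal follows essentially the same route as the paper: Skorokhod representation for~\eqref{eq:Duquesne_Kortchemski}, identification of ancestors via running minima of $W_n$, truncation into large and small jumps, Donsker for bridges on the large-jump part, $L^2$ control of the remainder, and then extension to $k\ge 2$ by analysing the branch-points. The paper truncates by keeping the $N$ largest jumps rather than those above a threshold $\varepsilon B_{\zeta(T_{A,n})}$, but this is cosmetic.

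One point deserves care. For the remainder you write that the conditional variance is bounded by a constant times $B_{\zeta(T_{A,n})}^{-1}\sum_{\text{small}} k_w$, using $\mathrm{Var}(X_{k,j})=O(k)$. This bound is too crude to close the argument directly: the sum $\sum_w k_w$ over \emph{all} ancestors is of order $B_{\zeta(T_{A,n})}$, not $o(B_{\zeta(T_{A,n})})$, and it is not immediate that removing the finitely many large jumps makes it small. The paper instead uses the sharper bound
\[
\Esc{|X_{k_w,\chi_w}|^2}{T_{A,n}} \;\le\; C\,(k_w-\chi_w) \;=\; C\bigl(\min_{[a_{n,i}+1,\lfloor\zeta(T_{A,n})t\rfloor]}W_n - W_n(a_{n,i})\bigr),
\]
so that the remainder variance is bounded by $C\,B_{\zeta(T_{A,n})}^{-1}\sum_{i>N}\bigl(\min W_n - W_n(a_{n,i})\bigr)$, which telescopes to $B_{\zeta(T_{A,n})}^{-1}W_n(\lfloor\zeta(T_{A,n})t\rfloor)-\sum_{i\le N}(\cdots)$ and goes to $0$ via the identity $\Xexc_t=\sum_i(\mathscr{I}_{s_i,t}-\Xexc_{s_i-})$. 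You do invoke the ``running infimum estimate'' and the Le~Gall--Miermont $L^2$ computation, which is exactly this; just make sure to use the $(k_w-\chi_w)$ bound rather than $k_w$ when you write it out.
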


Our argument follows closely that of Le Gall \& Miermont~\cite[proof of Proposition~7]{Le_Gall-Miermont:Scaling_limits_of_random_planar_maps_with_large_faces} who considered the two-type tree associated with the maps via the Bouttier--Di Francesco--Guitter bijection, whereas we use the Janson--Stef{\'a}nsson bijection which eliminates several technicalities. The argument relies of the convergence of the {\L}ukasiewicz path in~\eqref{eq:Duquesne_Kortchemski} which we assume for the rest of this subsection to hold almost surely, appealing to Skorokhod's representation Theorem. To ease the exposition, we start with the one-dimensional marginals.

\begin{proof}[Proof in the case $k=1$]
Fix $t \in [0,1]$ and recall the notation $\mathscr{I}_{s, t} = \inf_{r \in [s,t]} \Xexc_r$ for every $s \in [0,t]$. Let $(s_i)_{i \ge 1}$ be those times $s \in [0,t]$ such that
\[\Xexc_{s-} < \mathscr{I}_{s, t},\]
which are ranked in decreasing order of the values of the jumps of $\Xexc$: $\Delta \Xexc_{s_1} > \Delta \Xexc_{s_2} > \dots$. Similarly, let $\kappa_n$ be the number of integers $k \in \{0, \dots, \lfloor \zeta(T_{A,n}) t\rfloor-1\}$ such that
\[W_n(k) = \min_{r \in [k, \lfloor \zeta(T_{A,n}) t \rfloor]} W_n(r),\]
and let us denote by $a_{n,1}, \dots, a_{n, \kappa_n}$ these integers, ranked so that
\[W_n(a_{n,1}+1)-W_n(a_{n,1}) \ge \dots \ge W_n(a_{n,\kappa_n}+1)-W_n(a_{n,\kappa_n}).\]
It follows from~\eqref{eq:Duquesne_Kortchemski} that almost surely, for every $i \ge 1$, we have
\begin{equation}\label{eq:temps_sauts_ancetres}
\begin{aligned}
\frac{1}{\zeta(T_{A,n})} a_{n,i} &\cv s_i,
\\
\frac{1}{B_{\zeta(T_{A,n})}} \left(W_n(a_{n,i}+1)-W_n(a_{n,i})\right) &\cv \Delta \Xexc_{s_i},
\\
\frac{1}{B_{\zeta(T_{A,n})}} \left(\min_{k \in [a_{n,i}+1, \lfloor \zeta(T_{A,n}) t \rfloor]} W_n(k)-W_n(a_{n,i})\right) &\cv \mathscr{I}_{s_i, t} - \Xexc_{s_i-},
\end{aligned}
\end{equation}

Let $u_0, u_1, \dots, u_{\zeta(T_{A,n})}$ be the vertices of $T_{A,n}$ listed in lexicographical order. 
Observe that the $a_{n,i}$'s are exactly the indices of the strict ancestors of $u_{\lfloor \zeta(T_{A,n}) t \rfloor}$. We may then write
\[L_n(\lfloor \zeta(T_{A,n}) t \rfloor) = \ell(u_{\lfloor \zeta(T_{A,n}) t \rfloor})
= \sum_{i=1}^{\kappa_n} (\ell(u_{\psi(a_{n,i})}) - \ell(u_{a_{n,i}})),\]
where $u_{\psi(a_{n,i})}$ is the only child of $u_{a_{n,i}}$ which is an ancestor of $u_{\lfloor \zeta(T_{A,n}) t \rfloor}$. We claim that only the first values of $i$ matters. Indeed, by classical results on fluctuation theory, it is well known that
\[\Xexc_t = \sum_{i \ge 1} (\mathscr{I}_{s_i, t} - \Xexc_{s_i-}),\]
whence, for every $\varepsilon > 0$, there exists an integer $N \ge 1$ such that with probability at least $1-\varepsilon$, it holds that
\[\Xexc_t - \sum_{i \le N} (\mathscr{I}_{s_i, t} - \Xexc_{s_i-}) \le \varepsilon/2.\]
Then~\eqref{eq:temps_sauts_ancetres} and~\eqref{eq:Duquesne_Kortchemski} imply that for every $n$ sufficiently large, with probability at least $1-2\varepsilon$, it holds that
\[\frac{1}{B_{\zeta(T_{A,n})}} \left(W_n(\lfloor\zeta(T_{A,n}) t\rfloor) - \sum_{i=1}^{N \wedge \kappa_n} \min_{k \in [a_{n,i}+1, \lfloor \zeta(T_{A,n}) t \rfloor]} W_n(k) - W_n(a_{n,i})\right) < \varepsilon.\]
Observe that the left-hand side equals
\[\frac{1}{B_{\zeta(T_{A,n})}} \sum_{i=N+1}^{\kappa_n} \min_{k \in [a_{n,i}+1, \lfloor \zeta(T_{A,n}) t \rfloor]} W_n(k) - W_n(a_{n,i}),\]
which is therefore arbitrarily small when fixing $N$ large enough. Now recall that, conditional on $T_{A,n}$, the label increments $\ell(u_{\psi(a_{n,i})}) - \ell(u_{a_{n,i}})$ for $1 \le i \le \kappa_n$ are independent and distributed as $X_{k_i, \chi_i}$ where $(X_{k_i, 1}, \dots, X_{k_i, k_i})$ is a uniform random bridge in $\mathcal{B}^+_{k_i}$ defined in~\eqref{eq:pont_sans_saut_negatif}, where $k_i = W_n(a_{n, i}+1) - W_n(a_{n, i}) + 1$ is the number of children of $a_{n,i}$, and where $\chi_i = W_n(a_{n, i}+1) - W_n(\psi(a_{n, i})) + 1$ is the position of $\psi(a_{n,i})$ amongst its siblings; note that $W_n(\psi(a_{n, i})) = \min_{j \in [a_{n, i}+1, \lfloor \zeta(T_{A,n}) t\rfloor]} W_n(j)$. As in the preceding section, according to Le Gall \& Miermont~\cite[Equation~17]{Le_Gall-Miermont:Scaling_limits_of_random_planar_maps_with_large_faces}, there exists some universal constant $K > 0$ such that
\[\Esc{|\ell(u_{\psi(a_{n,i})}) - \ell(u_{a_{n,i}})|^2}{T_{A,n}}
= K\frac{\chi_i (k_i-\chi_i)}{k_i}
\le K \left(\min_{j \in [a_{n, i}+1, \lfloor \zeta(T_{A,n}) t\rfloor]} W_n(j) - W_n(a_{n, i})\right).\]
Since, conditional on $T_{A_n}$, these increments are centred and independent, we conclude that
\begin{multline*}
\Esc{\left|B_{\zeta(T_{A,n})}^{-1/2} \sum_{i=N+1}^{\kappa_n} (\ell(u_{\psi(a_{n,i})}) - \ell(u_{a_{n,i}}))\right|^2}{T_{A,n}}
\\
\le K \cdot B_{\zeta(T_{A,n})}^{-1} \sum_{i=N+1}^{\kappa_n} \min_{j \in [a_{n, i}+1, \lfloor \zeta(T_{A,n}) t\rfloor]} W_n(j) - W_n(a_{n, i}),
\end{multline*}
which, on a set of probability at least $1-2\varepsilon$ for every $n$ large enough, is bounded by $K \varepsilon$ according to the preceding discussion.

We next focus on $a_{n,1}, \dots, a_{n,N}$. Conditional on $\Xexc$, let $(\gamma_i)_{1 \le i \le N}$ be independent Brownian bridges of length $\Delta \Xexc_{s_i}$ respectively; it follows from~\eqref{eq:temps_sauts_ancetres} and~\eqref{eq:Duquesne_Kortchemski} together with Donsker's invariance principle for random bridges~\eqref{eq:Donsker_ponts} that
\[B_{\zeta(T_{A,n})}^{-1/2} \sum_{i=1}^N (\ell(u_{\psi(a_{n,i})}) - \ell(u_{a_{n,i}}))
\cvloi \sqrt{2} \sum_{i=1}^N \gamma_i(\Xexc_{s_i} - \mathscr{I}_{s_i, t}),\]
and the right-hand side converges further towards $\Lab_t$ as $N \to \infty$.
\end{proof}

We next briefly sketch the argument for the multi-dimensional marginals.

\begin{proof}[Proof in the case $k \ge 2$]
To ease the notation, we only treat the case $k = 2$, but the arguments are valid in the more general case. Let us fix $0 < s < t$; let us denote by $0 = a_{n,0}' < \dots < a_{n,\kappa_n'}'$ the indices of the strict ancestors of $u_{\lfloor \zeta(T_{A,n}) s\rfloor}$, and let similarly $0 = a_{n,0}'' < \dots < a_{n,\kappa_n''}''$ be the indices corresponding to the ancestors of $u_{\lfloor \zeta(T_{A,n}) t\rfloor}$. Let $j(n) \in \{0, \dots, \lfloor \zeta(T_{A,n}) s\rfloor\}$ be the index such that $u_{j(n)}$ is the last common ancestor of $u_{\lfloor \zeta(T_{A,n}) s\rfloor}$ and $u_{\lfloor \zeta(T_{A,n}) t\rfloor}$; we implicitly assume that $j(n) < \lfloor \zeta(T_{A,n}) s\rfloor$ but this case is treated similarly. Let $i(n) \in \{0, \dots \kappa_n' \wedge \kappa_n''\}$ be the index such that $j(n) = a_{n,i(n)}' = a_{n,i(n)}''$. Note that $j(n)$ is the unique time such that
\[W_n(j(n))
\le \min_{k \in [\lfloor \zeta(T_{A,n}) s\rfloor, \lfloor \zeta(T_{A,n}) t\rfloor]} W_n(k)
< \min_{k \in [j(n)+1, \lfloor \zeta(T_{A,n}) s\rfloor]} W_n(k).\]
By analogy with the discrete setting, we interpret the times $r \in [0,s]$ such that $\Xexc_{r-} < \mathscr{I}_{r, s}$ as the times of visit of the ancestors of the vertex visited at time $s$, and similarly for $t$. Introduce then the unique time $r_0 \in [0,s]$ such that
\[\Xexc_{r_0-} < \mathscr{I}_{s, t} < \mathscr{I}_{r_0, s},\]
which intuitively corresponds to the time of visit of the last common ancestor of the vertices visited at time $s$ and $t$, and indeed, from~\eqref{eq:Duquesne_Kortchemski}, it is the almost sure limit of $\zeta(T_{A,n})^{-1} j(n)$.
Let us consider the label increments at the branch-point: conditional on $\Xexc$, let $\gamma$ be a Brownian bridge of length $\Delta \Xexc_{r_0}$, then similar arguments as in the one-dimensional case show that the pair
\[B_{\zeta(T_{A,n})}^{-1/2} \left(L_n(a_{n,i(n)+1}') - L_n(j(n)), L_n(a_{n,i(n)+1}'') - L_n(j(n))\right)\]
converges in distribution as $n\to\infty$ towards
\[\sqrt{2} \cdot (\gamma(\Xexc_{r_0} - \mathscr{I}_{r_0, s}), \gamma(\Xexc_{r_0} - \mathscr{I}_{r_0, t})).\]
If one removes the branch-point from the subtree of $T_{A,n}$ spanned by its root and the vertices $u_{\lfloor \zeta(T_{A,n}) s\rfloor}$ and $u_{\lfloor \zeta(T_{A,n}) t\rfloor}$, then one gets three branches and the label increments between their root and their leaf are independent; we may apply the arguments of the previous proof to prove that these three increments, divided by $B_{\zeta(T_{A,n})}^{1/2}$ converge in distribution towards the `label increments' given by $\Lab$. Details are left to the reader, we refer to the end of the proof of Proposition~7 in~\cite{Le_Gall-Miermont:Scaling_limits_of_random_planar_maps_with_large_faces}.
\end{proof}

\subsection{Finite dimensional marginals in the Gaussian case}

We now focus on the Gaussian regime $\alpha=2$. As opposed to the other regimes, we consider \emph{random} marginals. Precisely, we prove the following result.

\begin{prop}\label{prop:marginal_etiquettes_cas_gaussien}
For every $k \ge 1$, sample $U_1, \dots, U_k$ i.i.d. uniform random variables in $[0,1]$ independently of the labelled trees, then the convergence
\[\left(B_{\zeta(T_{A,n})}^{1/2} L_n(\lfloor \zeta(T_{A,n}) U_i\rfloor)\right)_{1 \le i \le k}
\cvloi (\Lab_{U_i})_{1 \le i \le k},\]
holds jointly with~\eqref{eq:Duquesne_Kortchemski}, where the process $\Lab$ is independent of $U_1, \dots, U_k$.
\end{prop}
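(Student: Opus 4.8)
The plan is to reduce Proposition~\ref{prop:marginal_etiquettes_cas_gaussien} to a statement about a single label increment along a `typical' ancestral line and then to compute its asymptotic variance, exploiting that the limit is Gaussian and that the Brownian snake is characterised by its covariance. First I would use Skorokhod's representation theorem to assume that the convergence~\eqref{eq:Duquesne_Kortchemski} holds almost surely, and recall that the tightness proved in Section~\ref{sec:tension_labels} already gives relative compactness of the full collection $\left(B_{\zeta(T_{A,n})}^{-1/2} L_n(\lfloor \zeta(T_{A,n}) t\rfloor)\right)_{t}$ jointly with the tree; hence it suffices to identify the limit of the finite-dimensional marginals along the random times $U_1,\dots,U_k$. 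Because the $U_i$ are i.i.d.\ uniform and independent of everything, the vertices $u_{\lfloor \zeta(T_{A,n}) U_i\rfloor}$ are, conditionally on $T_{A,n}$, uniform random vertices, and their pairwise most-recent-common-ancestor structure converges (jointly with the tree) to the corresponding structure of $k$ i.i.d.\ uniform points on the Brownian CRT $\CRT$, by~\eqref{eq:Duquesne_Kortchemski}. So the combinatorial skeleton on which the labels live converges; what remains is to show the labels along it converge to the Gaussian field $\Lab$ with covariance $\tfrac23 \min \Hexc$.

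The key computation is the conditional variance of a single increment. Fix one index and write, as in the tightness proof, $\ell(u) = \sum_{i=1}^{\kappa_n} (\ell(u_{\psi(a_{n,i})}) - \ell(u_{a_{n,i}}))$ as a sum over the strict ancestors $a_{n,i}$ of $u = u_{\lfloor \zeta(T_{A,n}) U_i \rfloor}$, where conditional on the tree the summands are independent, centred, with $\Esc{|\ell(u_{\psi(a_{n,i})}) - \ell(u_{a_{n,i}})|^2}{T_{A,n}} = K \chi_i(k_i-\chi_i)/k_i$ for the universal constant $K$ (here $K=2/(\text{something})$; by the remark after~\eqref{eq:Donsker_ponts} the precise constant will produce the factor $2/3$). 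Thus $\Esc{L_n(\lfloor \zeta(T_{A,n}) U_i\rfloor)^2}{T_{A,n}} = K \sum_{i=1}^{\kappa_n} \chi_i(k_i-\chi_i)/k_i$, and the main point is that, for a uniformly chosen vertex, averaging over its position among the siblings of each ancestor, the quantity $\chi_i(k_i-\chi_i)/k_i$ has conditional mean $\approx (k_i-1)/6$, so the total conditional variance is $\approx \tfrac{K}{6} \sum_{\text{ancestors }w} (k_{pr(w)}-1)$, which is of order $|u| \cdot \sigma^2_\q$ in the finite variance case (and handled via the truncated variance in general, exactly as sketched in the text after~\eqref{eq:Donsker_ponts}). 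Using $|u| \approx (\zeta(T_{A,n})/B_{\zeta(T_{A,n})}) \Hexc_{U_i}$ from~\eqref{eq:Duquesne_Kortchemski} and the normalisation $B_n \sim (n\sigma^2_\q/2)^{1/2}$, this variance, once divided by $B_{\zeta(T_{A,n})}$, converges to $\tfrac23 \Hexc_{U_i}$; the pairwise covariances reduce in the same way to $\tfrac23 \min_{[U_i\wedge U_j, U_i\vee U_j]} \Hexc$ by restricting the sum to common ancestors. A (conditional) Lindeberg / martingale central limit theorem then upgrades these moment computations to joint Gaussian convergence of the vector, conditionally on the tree and the $U_i$, which since the limiting covariance is a measurable function of $(\Hexc, U_1,\dots,U_k)$ yields the unconditional joint convergence to $(\Lab_{U_i})_{1\le i\le k}$ with $\Lab$ the snake driven by $\Hexc$ and independent of the $U_i$.

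I expect the main obstacle to be the variance computation in the infinite-variance regime of $\mu_\q$ (still with $\alpha=2$): there the naive sum $\sum_w (k_{pr(w)}-1)$ is dominated by rare large-degree ancestors, and one must instead work with a truncated second moment $\sum_w \big(k_{pr(w)}\wedge b_n\big)$-type quantity and invoke a slowly-varying-function argument to see that the contribution of degrees above the truncation is negligible after rescaling by $B_{\zeta(T_{A,n})}$; controlling this requires the tail estimate of Lemma~\ref{lem:moments_marche_Luka} (or the large-deviation bounds behind it) to bound the number and size of such ancestors on a typical line, uniformly. The secondary technical point is verifying the Lindeberg condition for the conditional CLT — i.e.\ that no single increment $\ell(u_{\psi(a_{n,i})}) - \ell(u_{a_{n,i}})$, suitably rescaled, carries a non-vanishing fraction of the variance — which again follows from the fact that, along a typical ancestral line, the largest offspring $k_i$ is $o(|u|)$, a consequence of~\eqref{eq:Duquesne_Kortchemski} and the tail bounds already established. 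Once these two points are in place, passing from one marginal to $k$ marginals is routine given the convergence of the MRCA skeleton, and I would only sketch it, referring to the analogous passage in~\cite{Le_Gall-Miermont:Scaling_limits_of_random_planar_maps_with_large_faces} and to~\cite{Marzouk:Scaling_limits_of_random_bipartite_planar_maps_with_a_prescribed_degree_sequence}.
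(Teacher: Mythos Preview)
Your overall architecture is the same as the paper's: reduce to a conditional (Lindeberg-type) CLT for $\ell(u_n)$ given the tree, show that the rescaled conditional variance $\sum_i 2\chi_i(k_i-\chi_i)/(k_i+1)$ converges in probability to $\tfrac{2}{3}\Hexc_U$, check a Lindeberg/negligibility condition, and then pass to $k\ge 2$ via the MRCA skeleton. Where you diverge from the paper is in \emph{how} you intend to prove the variance convergence, and there the proposal has a real gap.

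You write that ``averaging over its position among the siblings of each ancestor'' gives $\chi_i(k_i-\chi_i)/k_i \approx (k_i-1)/6$, and then plan to sum. But for a fixed tree and a fixed $u_n$, the $\chi_i$ are deterministic; averaging over the choice of $u_n$ only gives you the \emph{expectation} of the conditional variance, not its convergence in probability. Moreover, even if you condition only on the ancestral degree sequence $(k_i)$, in the conditioned tree $T_{A,n}$ the $\chi_i$ are \emph{not} uniform in $\{1,\dots,k_i\}$ and the $k_i$ themselves are neither i.i.d.\ nor size-biased---the conditioning on having $n$ vertices with offspring in $A$ couples everything. The paper resolves this by a spinal decomposition (Duquesne's identity, Equation~\eqref{eq:epine}): it transfers the whole computation to the locally infinite tree $T_\infty$, on whose spine the offspring are genuinely i.i.d.\ size-biased and the child positions genuinely uniform. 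The absolute continuity costs a factor $nB_n$, which is repaid by a local limit theorem for the forest hanging off the spine; only then does your heuristic average become a legitimate law of large numbers for an i.i.d.\ sum. Your proposed substitute---tail bounds from Lemma~\ref{lem:moments_marche_Luka} on the number and size of large-degree ancestors---does not supply the i.i.d.\ structure needed to show that the \emph{random} variance concentrates, and I do not see how to close the argument without something playing the role of the spine.

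A smaller point for $k\ge 2$: the passage is not entirely routine, because one must show that the label increment \emph{at} the branch-point $w_n$ (between $\hat u_n$ and $\hat v_n$, which are siblings) is negligible after rescaling. The paper isolates this as a separate lemma showing $B_{\zeta(T_{A,n})}^{-1/2}\max_u\max_i|\ell(ui)-\ell(u)|\to 0$ in probability, using that $\Delta(T_{A,n})=o(B_n)$ when $\alpha=2$; you should at least flag this step.
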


Since we know that the sequence of continuous processes $B_{\zeta(T_{A,n})}^{-1/2} L_n(\zeta(T_{A,n}) \cdot)$ is tight, this suffices to characterise the subsequential limits as $\Lab$. Indeed, given any finite collection of fixed times in $[0,1]$, one can approximate them by sampling sufficiently many i.i.d. uniform random variables in $[0,1]$; then the equicontinuity given by the tightness shows that the images of $L_n$ at these random times approximate well the values at the deterministic times, and the same hods for the uniformly continuous limit $\Lab$, see e.g. Addario-Berry \& Albenque~\cite[proof of Proposition 6.1]{Addario_Berry-Albenque:The_scaling_limit_of_random_simple_triangulations_and_random_simple_triangulations} for a detailed argument.

As previously, we first treat the one-dimensional case.

\begin{proof}[Proof in the case $k=1$]
The approach was described earlier in this section. 
Sample $U$ uniformly at random in $[0,1]$ independently of the rest and note that the vertex $u_n$ visited at the time $\lceil \zeta(T_{A,n}) U \rceil$ in lexicographical order has the uniform distribution in $T_n$;%
\footnote{Precisely $u_n$ has the uniform distribution in $T_n \setminus \{\varnothing\}$, but we omit this detail for the sake of clarity.} 
let us write
\[\left(\frac{1}{B_{\zeta(T_{A,n})}}\right)^{1/2} \ell(u_n)
= \left(\frac{B_{\zeta(T_{A,n})}}{\zeta(T_{A,n})} |u_n|\right)^{1/2} \cdot \left(\frac{\zeta(T_{A,n})}{B_{\zeta(T_{A,n})}^2 |u_n|}\right)^{1/2} \ell(u_n).\]
It follows from~\eqref{eq:Duquesne_Kortchemski} that the first term on the right converges in distribution towards $\Hexc_U$, it is therefore equivalent to show that, jointly with~\eqref{eq:Duquesne_Kortchemski}, we have the convergence in distribution
\begin{equation}\label{eq:cv_label_unif}
\left(\frac{3 \zeta(T_{A,n})}{2 B_{\zeta(T_{A,n})}^2 |u_n|}\right)^{1/2} \ell(u_n)
\cvloi G
\end{equation}
where $G$ has the standard Gaussian distribution. Recall that according to Remark~\ref{rem:nombre_aretes_arbre}, we may, and do, replace $\zeta(T_{A,n})$ by $\mu_\q(A)^{-1} n$ and $B_{\zeta(T_{A,n})}$ by $\mu_\q(A)^{-1/2} B_n$.

For every $k \ge j \ge 1$, let us denote by $A_{k,j}(u_n)$ the number of strict ancestors of $u_n$ with $k$ children, among which the $j$-th one is again an ancestor of $u_n$:
\[A_{k,j}(u_n) =\# \left\{v \in \llbracket\varnothing, u_n\llbracket : k_v = k \text{ and } vj \in \rrbracket\varnothing, u_n\rrbracket\right\}.\]
The idea is to decompose $\ell(u_n)$ as the sum of the label increments between two consecutive ancestors $w$ and $pr(w)$; conditionally on $T_n$, these random variables are independent and, whenever $k_{pr(w)}=k$ and $w = pr(w)j$, the label increment has the law of $j$-th marginal of a uniform random bridge in $\mathcal{B}_k^+$, which is centred and has variance, say, $\sigma_{k,j}^2$. This variance is known explicitly, see e.g.~\cite[page 1664%
\footnote{Note that they consider uniform random bridges in $\mathcal{B}_{k+1}^+$!}]{Marckert-Miermont:Invariance_principles_for_random_bipartite_planar_maps}: we have
\begin{equation}\label{eq:variance_accroissements_labels_cas_gaussien}
\sigma_{k,j}^2 = \frac{2j(k-j)}{k+1},
\qquad\text{so}\qquad
\sum_{j=1}^k \sigma_{k,j}^2 = \frac{k(k-1)}{3}.
\end{equation}
Let $\Delta(T_{A,n})$ denote the largest offspring of a vertex of $T_{A,n}$. As in the classical proof of the central limit theorem, we may write for every $z \in \R$,
\begin{align*}
&\Esc{\exp\left(\i z \left(\frac{3 n}{2 B_n^2 |u_n|}\right)^{1/2} \ell(u_n)\right)}{T_n, u_n}
\\
&= \prod_{k=1}^{\Delta(T_{A,n})} \prod_{j=1}^k \left(1 - \frac{z^2}{2} \frac{3 n \sigma_{k,j}^2}{2 B_n^2 |u_n|} + o\left(\left(\frac{n \sigma_{k,j}^2}{B_n^2 |u_n|}\right)^2\right)\right)^{A_{k,j}(u_n)}
\\
&= \exp\left(- \frac{z^2}{2} \sum_{k=1}^{\Delta(T_{A,n})} \sum_{j=1}^k A_{k,j}(u_n) \left(\frac{3 n \sigma_{k,j}^2}{2 B_n^2 |u_n|}  + O\left(\left(\frac{n \sigma_{k,j}^2}{B_n^2 |u_n|}\right)^2\right)\right)\right).
\end{align*}
We claim that
\begin{equation}\label{eq:moyenne_accroissements_etiquettes_cas_gaussien}
\sum_{k=1}^{\Delta(T_{A,n})} \sum_{j=1}^k \frac{3 n \sigma_{k,j}^2}{2 B_n^2 |u_n|} A_{k,j}(u_n)
\cvproba 1,
\quad\text{and}\quad
\sum_{k=1}^{\Delta(T_{A,n})} \sum_{j=1}^k \left(\frac{n \sigma_{k,j}^2}{B_n^2 |u_n|}\right)^2 A_{k,j}(u_n)
\cvproba 0.
\end{equation}
Then an application of Lebesgue's Theorem yields our claim.

We shall restrict ourselves to a `good event' that we now introduce. For a vertex $u \in T_{A,n}$, let $LR(u)$ denote the number of vertices branching off of the path $\llbracket \varnothing, u \llbracket$ i.e. whose parent belongs to this ancestral line, and which themselves do not; formally,
\[LR(u) = \#\left\{v \in T_{A,n}\setminus \llbracket \varnothing, u \rrbracket : pr(v) \in \llbracket \varnothing, u \llbracket \right\}.\]
For a tree $T$, a vertex $u \in T$ and three (small) parameters $\eta, \gamma, \kappa > 0$, let us consider the event
\[E_{n, \eta, \gamma, \kappa}(T, u) = 
\left\{\Delta(T) \le \eta B_n\right\}
\cap \left\{\gamma \le \frac{B_n}{n} |u| \le \gamma^{-1}\right\}
\cap \left\{LR(u) \ge \kappa B_n\right\}.\]
If $u$ is the $k$-th vertex of $T$ in lexicographical order and $W$ and $H$ and respectively the {\L}ukasiewicz path and the height process of $T$, then $\Delta(T)$ is the largest jump plus one of $W$ and $|u|$ equals $H(k)$. Finally, if $u_n$ is a uniform random vertex of $T_{A,n}$, then $LR(u_n)$ has the law of the sum of two independent copies of $W(U_n)$ where $U_n$ is a uniform random integer in $\{0, \dots, \zeta(T_{A,n})\}$ independent of $T_{A,n}$; indeed, the number of vertices branching off to the right of the path $\llbracket \varnothing, u_n \llbracket$ is exactly $W(U_n)$, and then by symmetry, the number of vertices branching off to the left is the value of the `mirror {\L}ukasiewicz path' at the corresponding time; we then conclude by the invariance of the law of the tree by this `mirror' operation. It thus follows from~\eqref{eq:Duquesne_Kortchemski} that for any $\eta > 0$,
\[\lim_{\gamma, \kappa \downarrow 0} \liminf_{n \to \infty} \Pr{E_{n, \eta, \gamma, \kappa}(T_{A,n}, u_n)} = 1.\]
Now fix $\varepsilon, \delta > 0$ and choose $\gamma, \kappa$ small enough so that for any $\eta > 0$, the probability of $E_{n, \eta, \gamma, \kappa}(T_{A,n}, u_n)$ is at least $1-\delta$ for every $n$ large enough. We shall tune $\eta$ in such a way that for every $n$ large enough,
\[\Pr{\left\{\left|\sum_{k=1}^{\Delta(T_{A,n})} \sum_{j=1}^k \frac{3 n \sigma_{k,j}^2}{2 B_n^2 |u_n|} A_{k,j}(u_n) - 1\right| > \varepsilon\right\} \cap E_{n, \eta, \gamma, \kappa}(T_{A,n}, u_n)} \le \delta.\]

We appeal to a \emph{spinal decomposition} due to Duquesne~\cite[Equation~24]{Duquesne:An_elementary_proof_of_Hawkes_s_conjecture_on_Galton_Watson_trees} which results in an absolute continuity relation between the tree $T_{A,n}$ and the tree $T_\infty$ `conditioned to survive', which is the infinite tree which arises as the \emph{local limit} of $T_{A,n}$. It was introduced by Kesten~\cite{Kesten:Subdiffusive_behavior_of_random_walk_on_a_random_cluster} and the most general results on such convergences are due to Abraham \& Delmas~\cite{Abraham-Delmas:Local_limits_of_conditioned_Galton_Watson_trees_the_infinite_spine_case}. The tree $T_\infty$ contains a unique infinite simple path called the \emph{spine}, starting from the root, and the vertices which belong to this spine reproduce according to the \emph{size-biased} law $\sum_{k \ge 1} k \mu_\q(k) \delta_k$, whereas the other vertices reproduce according to $\mu_\q$, and all the vertices reproduce independently. For a tree $\tau$ and a vertex $v \in \tau$, let $\theta_v(\tau)$ be the subtree consisting of $v$ and all its progeny, and let $\mathsf{Cut}_v(\tau) = \{v\} \cup (\tau \setminus \theta_v(\tau))$ be its complement (note that $v$ belongs to both parts). Then for every non-negative measurable functions $G_1, G_2$, for every $h \ge 0$, we have
\begin{equation}\label{eq:epine}
\Es{\sum_{\substack{v \in T \\ |v| = h}} G_1(\mathsf{Cut}_v(T), v) \cdot G_2(\theta_v(T))}
= \Es{G_1(\mathsf{Cut}_{v_h^\ast}(T_\infty), v_h^\ast) \cdot G_2(T')},
\end{equation}
where $v_h^\ast$ is the only vertex on the spine of $T_\infty$ at height $h$ and where $T'$ is independent of $T_\infty$ and distributed as a non-conditioned Bienaymé--Galton--Watson tree with offspring distribution $\mu_\q$.

Observe that the $A_{k,j}(u_n)$ are measurable with respect to $(u_n, \mathsf{Cut}_{u_n}(T_{A,n}))$ and so are $|u_n|$ and $LR(u_n)$; finally, let us replace the event $E_{n, \eta, \gamma, \kappa}(T_{A,n}, u_n)$ by $E_{n, \eta, \gamma, \kappa}(\mathsf{Cut}_{u_n}(T_{A,n}), u_n)$ whose probability is greater. 
Let $T$ be a a non-conditioned Bienaymé--Galton--Watson tree with offspring distribution $\mu_\q$ and let $\zeta_A(T)$ be its number of vertices with offspring in $A$. Let us write
\begin{multline*}
\Pr{\left\{\left|\sum_{k=1}^{\Delta(\mathsf{Cut}_{u_n}(T_{A,n}))} \sum_{j=1}^k \frac{3 n \sigma_{k,j}^2}{2 B_n^2 |u_n|} A_{k,j}(u_n) - 1\right| > \varepsilon\right\} \cap E_{n, \eta, \gamma, \kappa}(\mathsf{Cut}_{u_n}(T_{A,n}), u_n)}
\\
= \frac{1}{\P(\zeta_A(T) = n)} \Pr{\left\{\left|\sum_{k=1}^{\eta B_n} \sum_{j=1}^k \frac{3 n \sigma_{k,j}^2}{2 B_n^2 |u_n|} A_{k,j}(u_n) - 1\right| > \varepsilon\right\} \cap \{\zeta_A(T) = n\} \cap E_{n, \eta, \gamma, \kappa}(\mathsf{Cut}_{u_n}(T), u_n)}.
\end{multline*}
According to~\cite[Theorem~8.1]{Kortchemski:Invariance_principles_for_Galton_Watson_trees_conditioned_on_the_number_of_leaves}, the quantity $(n B_n)^{-1} \P(\zeta_A(T) = n)$ converges to some positive and finite limit. Then by conditioning on the value of $u_n$, we have
\begin{multline*}
\Pr{\left\{\left|\sum_{k=1}^{\eta B_n} \sum_{j=1}^k \frac{3 n \sigma_{k,j}^2}{2 B_n^2 |u_n|} A_{k,j}(u_n) - 1\right| > \varepsilon\right\} \cap \{\zeta_A(T) = n\} \cap E_{n, \eta, \gamma, \kappa}(\mathsf{Cut}_{u_n}(T), u_n)}
\\
= \frac{1}{n} \sum_{h = \gamma n/B_n}^{\gamma^{-1} n/B_n} 
\Es{\sum_{\substack{v \in T \\ |v| = h}} \ind{\{|\sum_{k=1}^{\eta B_n} \sum_{j=1}^k \frac{3 n \sigma_{k,j}^2}{2 B_n^2 h} A_{k,j}(v) - 1| > \varepsilon\} \cap \{\zeta_A(T) = n\} \cap E_{n, \eta, \gamma, \kappa}(\mathsf{Cut}_v(T), v)}}.
\end{multline*}
This is almost in the form of~\eqref{eq:epine}, we just need to express the quantity $\zeta_A(T)$ in terms of $\mathsf{Cut}_v(T)$, $v$, and $\theta_v(T)$. Let $\lambda(\mathsf{Cut}_v(T))$ be the number of leaves of the tree $\mathsf{Cut}_v(T)$; one of them is $v$ who gives birth to a progeny coded by $\theta_v(T)$, whereas the other leaves give birth to progenies coded by independent non-conditioned Bienaymé--Galton--Watson trees with offspring distribution $\mu_\q$. By splitting the contribution to $\zeta_A(T)$ of these trees and of $\mathsf{Cut}_v(T)$, the spinal decomposition~\eqref{eq:epine} reads
\begin{multline*}
\Es{\sum_{\substack{v \in T \\ |v| = h}} \ind{\{|\sum_{k=1}^{\eta B_n} \sum_{j=1}^k \frac{3 n \sigma_{k,j}^2}{2 B_n^2 h} A_{k,j}(v) - 1| > \varepsilon\} \cap \{\zeta_A(T) = n\} \cap E_{n, \eta, \gamma, \kappa}(\mathsf{Cut}_v(T), v)}}
\\
= \P\left(\left\{\left|\sum_{k=1}^{\eta B_n} \sum_{j=1}^k \frac{3 n \sigma_{k,j}^2}{2 B_n^2 h} A_{k,j}(v^\ast_h) - 1\right| > \varepsilon\right\}
\cap E_{n, \eta, \gamma, \kappa}(\mathsf{Cut}_{v^\ast_h}(T_\infty), v^\ast_h)\right.
\\
\left.\cap \{\zeta_A(\mathsf{Cut}_{v^\ast_h}(T_\infty)) + \zeta_A(F_{\lambda(\mathsf{Cut}_{v^\ast_h}(T_\infty))}) = n\}\right),
\end{multline*}
where for every $N \ge 1$, we let $F_N$ denote a forest of i.i.d. non-conditioned Bienaymé--Galton--Watson trees with offspring distribution $\mu_\q$, which is independent of the rest. In the case $A = \Z_+$ when we condition by the total size, it is well-known that $\zeta_{\Z_+}(F_1) = |T|$ belongs to the domain of attraction of a stable law with index $1/2$, and an application of the local limit theorem shows that there exists a constant $C > 0$ such that for every $N, m \ge 1$,
\[\Pr{\zeta_{\Z_+}(F_n) = m} \le \frac{C}{B_N'},\]
where $(B_N')_{N \ge 1}$ is an increasing sequence such that $(N^{-2} B_N')_{N \ge 1}$ is slowly varying and furthermore
\[B_{B_n}' \enskip\mathop{\sim}^{}_{n \to \infty}\enskip n.\]
We refer to the discussion leading to Equation~28 in~\cite{Kortchemski:Sub_exponential_tail_bounds_for_conditioned_stable_Bienayme_Galton_Watson_trees} for more details. This extends to the general case where either $A$ or $\Z_+ \setminus A$ is finite by replacing Equation~26 in~\cite{Kortchemski:Sub_exponential_tail_bounds_for_conditioned_stable_Bienayme_Galton_Watson_trees} by~\cite[Theorem~8.1]{Kortchemski:Invariance_principles_for_Galton_Watson_trees_conditioned_on_the_number_of_leaves}, so $\zeta_A(F_1)$ also belongs to the domain of attraction of a stable law with index $1/2$; this only modifies the preceding constant $C$. In our case, we shall take $N = LR(v_h^\ast)+1$ which, on the event $E_{n, \eta, \gamma, \kappa}(\mathsf{Cut}_{v^\ast_h}(T_\infty), v^\ast_h)$ is at least $\kappa B_n$, so $(B_N')^{-1}$ is at most $B_{\kappa B_n}^{-1} \sim \kappa^{-2} n^{-1}$ as $n \to \infty$.

Let us put together the previous arguments: we have shown that there exists a constant $K$ such that
\begin{align*}
&\Pr{\left\{\left|\sum_{k=1}^{\Delta(T_{A,n})} \sum_{j=1}^k \frac{3 n \sigma_{k,j}^2}{2 B_n^2 |u_n|} A_{k,j}(u_n) - 1\right| > \varepsilon\right\} \cap E_{n, \eta, \gamma, \kappa}(T_{A,n}, u_n)}
\\
&\le K \cdot n B_n\cdot
\frac{1}{n} \sum_{h = \gamma n/B_n}^{\gamma^{-1} n/B_n}
\frac{1}{\kappa^2 n}\cdot
\Pr{\left\{\left|\sum_{k=1}^{\eta B_n} \sum_{j=1}^k \frac{3 n \sigma_{k,j}^2}{2 B_n^2 h} A_{k,j}(v^\ast_h) - 1\right| > \varepsilon\right\}
\cap E_{n, \eta, \gamma, \kappa}(\mathsf{Cut}_{v^\ast_h}(T_\infty), v^\ast_h)}
\\
&\le \frac{K}{\gamma \kappa^2} 
\sup_{\gamma n/B_n \le h \le \gamma^{-1} n/B_n}
\Pr{\left\{\left|\sum_{k=1}^{\eta B_n} \sum_{j=1}^k \frac{3 n \sigma_{k,j}^2}{2 B_n^2 h} A_{k,j}(v^\ast_h) - 1\right| > \varepsilon\right\}
\cap E_{n, \eta, \gamma, \kappa}(\mathsf{Cut}_{v^\ast_h}(T_\infty), v^\ast_h)}.
\end{align*}
We finally treat the last probability; let us write
\[\sum_{k=1}^{\eta B_n} \sum_{j=1}^k \frac{3 n \sigma_{k,j}^2}{2 B_n^2 h} A_{k,j}(v^\ast_h)
= \sum_{i=1}^h \frac{3 n}{2 B_n^2 h} X_i,\]
where $X_i$ takes the value $\sigma_{k,j}^2$ if and only if the vertex at height $i-1$ on the spine has $k \le \eta B_n$ children, and the vertex at height $i$ on the spine is its $j$-th child, whereas $X_i$ takes the value $0$ whenever $k > \eta B_n$. Recall that on the spine, the vertices reproduce independently according to the size-biased law $\sum_{k \ge 1} k \mu_\q(k) \delta_k$, and furthermore, conditional on the number of children of its parent, the position of a vertex amongst its sibling is uniformly chosen.
According to~\eqref{eq:variance_accroissements_labels_cas_gaussien}, we thus have as $n \to \infty$
\begin{align*}
\Es{\sum_{k=1}^{\eta B_n} \sum_{j=1}^k \frac{3 n \sigma_{k,j}^2}{2 B_n^2 h} A_{k,j}(v^\ast_h)}
&= \frac{3 n}{2 B_n^2} \sum_{k=1}^{\eta B_n} \sum_{j=1}^k \sigma_{k,j}^2 \mu_\q(k)
\\
&= \frac{3 n}{2 B_n^2} \sum_{k=1}^{\eta B_n} \frac{k(k-1)}{3} \mu_\q(k)
\\
&\sim \frac{n}{2 B_n^2} \Var(\xi \ind{\xi \le \eta B_n}),
\end{align*}
where $\xi$ is distributed according to $\mu_\q$. Recall that the function $l(x) = \Var(\xi \ind{\xi \le x})$ is slowly varying so the factor $\eta$ in the last line above can be removed and we conclude from~\cite[Equation~7]{Kortchemski:Sub_exponential_tail_bounds_for_conditioned_stable_Bienayme_Galton_Watson_trees}, that our expectation tends to $1$ as $n \to \infty$. Replacing $\varepsilon$ by $2\varepsilon$ in all the preceding equations, we may thus replace the factor $1$ that we subtract in the probability that we are currently aiming at controlling by the preceding expectation. An application fo Markov inequality then shows that the probability
\[\Prc{\left\{\left|\sum_{k=1}^{\eta B_n} \sum_{j=1}^k \frac{3 n \sigma_{k,j}^2}{2 B_n^2 h} A_{k,j}(v^\ast_h) - \Es{\sum_{k=1}^{\eta B_n} \sum_{j=1}^k \frac{3 n \sigma_{k,j}^2}{2 B_n^2 h} A_{k,j}(v^\ast_h)}\right| > \varepsilon\right\}}
{E_{n, \eta, \gamma, \kappa}(\mathsf{Cut}_{v^\ast_h}(T_\infty), v^\ast_h)}\]
is bounded above by
\[\varepsilon^{-2} h \left(\frac{3 n}{2 B_n^2 h}\right)^2 \Es{X_1^2}.\]
Observe from~\eqref{eq:variance_accroissements_labels_cas_gaussien} that $\sigma^2_{k,j} \le k/2$ for every pair $1 \le j \le k$ so, as previously,
\[\Es{X_1^2} = \sum_{k=1}^{\eta B_n} \sum_{j=1}^k \sigma_{k,j}^4 \mu_\q(k)
\le \sum_{k=1}^{\eta B_n} \frac{k^3}{6} \mu_\q(k)
\le \frac{\eta B_n}{6} \sum_{k=1}^{\eta B_n} k^2 \mu_\q(k)
\sim \frac{\eta}{6} B_n l(B_n).\]
Whence for every $h \in [\gamma n/B_n, \gamma^{-1} n/B_n]$
\[h \left(\frac{3 n}{2 B_n^2 h}\right)^2 \Es{X_1^2}
\le \frac{9 \eta n^2 B_n l(B_n)}{24 B_n^4 h} (1+o(1))
\le \frac{\eta}{\gamma} \frac{9n l(B_n)}{24 B_n^2} (1+o(1)),\]
and the last fraction tends to $9/12$ as $n \to \infty$. So finally, we have for every $n$ large enough,
\[\Pr{\left\{\left|\sum_{k=1}^{\Delta(T_{A,n})} \sum_{j=1}^k \frac{3 n \sigma_{k,j}^2}{2 B_n^2 |u_n|} A_{k,j}(u_n) - 1\right| > \varepsilon\right\} \cap E_{n, \eta, \gamma, \kappa}(T_{A,n}, u_n)}
\le \frac{K \eta}{\gamma^2 \kappa^2}.\]
Since $\eta$ can be chosen arbitrarily small, the first convergence in~\eqref{eq:moyenne_accroissements_etiquettes_cas_gaussien} follows.

The second convergence in~\eqref{eq:moyenne_accroissements_etiquettes_cas_gaussien} follows by the exact same calculations: we can bound the probability
\[\Pr{\left\{\sum_{k=1}^{\Delta(\mathsf{Cut}_{u_n}(T_{A,n}))} \sum_{j=1}^k \left(\frac{n \sigma_{k,j}^2}{B_n^2 |u_n|} \right)^2 A_{k,j}(u_n) > \varepsilon\right\} \cap E_{n, \eta, \gamma, \kappa}(\mathsf{Cut}_{u_n}(T_{A,n}), u_n)}\]
by
\[\frac{1}{\varepsilon} \frac{B_n}{\gamma n} \frac{n^2}{B_n^4} \Var(\xi \ind{\xi \le \eta B_n})
\sim \frac{1}{\varepsilon\gamma} \frac{n}{B_n^3} l(B_n)
\sim \frac{2}{\varepsilon \gamma B_n}
\to 0,\]
as $n \to \infty$, which concludes the proof.
\end{proof}

As in the case $\alpha < 2$, we close this section by sketching the argument for the multi-dimensional marginals. One of the differences is that now the contribution of the branch-points vanishes.

\begin{lem}\label{lem:deplacement_max_autour_site}
We have the convergence in probability
\[B_{\zeta(T_{A,n})}^{-1/2} \max_{u \in T_n} \left|\max_{1 \le i \le k_u} \ell(ui) - \min_{1 \le i \le k_u} \ell(ui)\right| \cvproba 0.\]
\end{lem}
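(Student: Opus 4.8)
The plan is to condition on the tree $T_{A,n}$ and run a moment computation. Write $m = \zeta(T_{A,n})$; by Remark~\ref{rem:nombre_aretes_arbre} we may replace $B_m$ by $B_n$ throughout. For an internal vertex $u$, set $\mathrm{osc}(u) = \max_{1 \le i \le k_u} \ell(ui) - \min_{1 \le i \le k_u} \ell(ui)$ and $M_n = \max_{u \in T_{A,n}} \mathrm{osc}(u)$. Since $\ell(ui) = \ell(u) + Y^{(u)}_i$, where conditionally on $T_{A,n}$ the vectors $(Y^{(u)}_1, \dots, Y^{(u)}_{k_u})$ are independent over the internal vertices $u$ and each is uniform on $\mathcal{B}_{k_u}^+$, the variables $\mathrm{osc}(u)$ are conditionally independent with, conditionally on $T_{A,n}$, $\mathrm{osc}(u) \eqloi \mathrm{osc}(X_{k_u})$, writing $\mathrm{osc}(X_k) = \max_i X_{k,i} - \min_i X_{k,i}$ for $X_k$ uniform on $\mathcal{B}_k^+$.

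First I would record the moment bound: for every $q \ge 1$ there is $C(q)$ with $\Es{\mathrm{osc}(X_k)^q} \le C(q)\, k^{q/2}$ for all $k \ge 1$. As in the proof of Lemma~\ref{lem:moments_marche_Luka}, the first $\lceil k/2 \rceil$ coordinates of $X_k$ are absolutely continuous — with density bounded by a universal constant, by the local limit theorem — with respect to the first $\lceil k/2 \rceil$ steps of the centred, variance-$2$ walk with step law $\sum_{i \ge -1} 2^{-i-2}\delta_i$; Doob's $L^q$-inequality together with the Marcinkiewicz--Zygmund inequality then bound $\Es{\max_{i \le k/2}|X_{k,i}|^q}$ by $C(q)k^{q/2}$, the second half being symmetric, and $\mathrm{osc}(X_k) \le 2\max_i|X_{k,i}|$ gives the claim (one may alternatively invoke~\cite[Lemma~1]{Le_Gall-Miermont:Scaling_limits_of_random_planar_maps_with_large_faces} together with such a maximal inequality). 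By conditional independence this yields
\[
\Esc{M_n^q}{T_{A,n}} \;\le\; \sum_{u}\Esc{\mathrm{osc}(u)^q}{T_{A,n}} \;\le\; C(q)\sum_{u \in T_{A,n}} k_u^{q/2},
\]
the sums bearing on internal vertices, hence $\Esc{(B_n^{-1/2}M_n)^q}{T_{A,n}} \le C(q)\,B_n^{-q/2}\sum_{u} k_u^{q/2}$, so it suffices to fix some $q > 2\alpha = 4$ and prove that $B_n^{-q/2}\sum_{u \in T_{A,n}} k_u^{q/2} \cvproba 0$.

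To see this, list the vertices $u_0, \dots, u_m$ in lexicographical order; then $k_{u_j} = W_n(j+1) - W_n(j) + 1$, and since $W_n(j+1)-W_n(j) \ge 1$ whenever $k_{u_j} \ge 2$ while $k_{u_j}^{q/2} \le 1$ otherwise, one has $\sum_{u} k_u^{q/2} \le 2^{q/2}\sum_{j=0}^{m}|W_n(j+1)-W_n(j)|^{q/2} + (m+1)$. By Remark~\ref{rem:nombre_aretes_arbre} the integer $m$ is of order $n$, and since $n^{-1/\alpha}B_n$ is slowly varying we have $n\,B_n^{-q/2} = n^{1-q/(2\alpha)}(n^{-1/\alpha}B_n)^{-q/2} \to 0$ because $q > 2\alpha$, so the term $(m+1)$ contributes $o(B_n^{q/2})$ in probability. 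For the sum of jumps, here $\alpha = 2$, so the limit $\Xexc$ in~\eqref{eq:Duquesne_Kortchemski} is $\sqrt{2}$ times the Brownian excursion, hence continuous; consequently the largest jump of the rescaled path $B_m^{-1}W_n(m\cdot)$ tends to $0$, that is $\Delta(T_{A,n}) = o(B_n)$ in probability. Fixing $\delta > 0$, on the event $\{\Delta(T_{A,n}) \le \delta B_n\}$ — of probability tending to one — every jump of $W_n$ is at most $\delta B_n$, so using $q/2 \ge 2$,
\[
\sum_{j=0}^{m}|W_n(j+1)-W_n(j)|^{q/2} \;\le\; (\delta B_n)^{q/2 - 2}\sum_{j=0}^{m}\bigl(W_n(j+1)-W_n(j)\bigr)^{2}.
\]
It is standard — a consequence of~\eqref{eq:Duquesne_Kortchemski}, or directly from a first-moment computation using the tail of $\mu_\q$ and Karamata's theorem — that the quadratic variation $\sum_{j}(W_n(j+1)-W_n(j))^{2}$ is of order $B_n^2$ in probability; hence the right-hand side above is $\delta^{q/2-2}O(B_n^{q/2})$ in probability, and letting $\delta \downarrow 0$ gives $\sum_{j}|W_n(j+1)-W_n(j)|^{q/2} = o(B_n^{q/2})$ in probability. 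Thus $B_n^{-q/2}\sum_{u} k_u^{q/2} \cvproba 0$.

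It then remains to descend from the conditional statement: the displays above give $\Esc{(B_n^{-1/2}M_n)^q}{T_{A,n}} \cvproba 0$, so by Markov's inequality $\Prc{B_n^{-1/2}M_n > \varepsilon}{T_{A,n}} \le \varepsilon^{-q}\Esc{(B_n^{-1/2}M_n)^q}{T_{A,n}} \cvproba 0$ for every $\varepsilon > 0$, and as these conditional probabilities lie in $[0,1]$, taking expectations yields $\Pr{B_n^{-1/2}M_n > \varepsilon} \to 0$, which is the assertion (with $B_{\zeta(T_{A,n})}$ in place of $B_n$). The hard part is the estimate $B_n^{-q/2}\sum_u k_u^{q/2} \to 0$, and this is precisely where the hypothesis $\alpha = 2$ is genuinely used: only because the scaling limit of the {\L}ukasiewicz path has no jumps does summing the $q/2$-th powers (with $q/2 > \alpha$) of the rescaled jumps — rather than merely bounding each of them by the largest one — produce a negligible quantity; in the infinite-variance boundary case the gain is only by a slowly varying factor, so the argument has essentially no slack to spare.
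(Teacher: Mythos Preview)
Your argument is correct. Both your proof and the paper's reduce to the same two tree-level estimates: $\Delta(T_{A,n}) = o(B_n)$ in probability (from the continuity of $\Xexc$ when $\alpha=2$) and $\sum_{u} k_u^{2} = O_P(B_n^{2})$, the latter being exactly what the paper establishes in detail via absolute continuity with the unconditioned walk and the truncated second moment (you wave at this as ``standard'' --- it is, but it is precisely the paragraph the paper devotes the most care to). The difference lies in how the bridge oscillation is fed into the union over internal vertices: the paper uses the sub-Gaussian tail bound $\Pr{\mathrm{osc}(X_k)>x}\le C\exp(-cx^{2}/k)$ from~\cite[Lemma~6]{Marzouk:Scaling_limits_of_random_bipartite_planar_maps_with_a_prescribed_degree_sequence} and multiplies the complementary probabilities, arriving at the criterion $\sum_k \zeta_k(T_{A,n})\e^{-c\varepsilon^{2}B_n/k}\to 0$; you instead use the polynomial moment bound $\Es{\mathrm{osc}(X_k)^{q}}\le C(q)k^{q/2}$ and a crude $q$-th moment union bound, arriving at $B_n^{-q/2}\sum_u k_u^{q/2}\to 0$ for some $q>4$. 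Your route is slightly more elementary (no exponential tail input) and your splitting $k_u^{q/2}\le (\delta B_n)^{q/2-2}k_u^{2}$ is the exact analogue of the paper's use of the monotonicity of $x\mapsto x^{2}\e^{-x}$; both devices convert the problem to boundedness of the rescaled quadratic variation. The paper's exponential bound would in principle allow sharper quantitative control, but for the bare convergence in probability neither approach dominates the other.
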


\begin{proof}
Note that again, we may, and shall, replace $B_{\zeta(T_{A,n})}$ by $B_n$. 
We follow the proof of~\cite[Proposition 2]{Marzouk:Scaling_limits_of_random_bipartite_planar_maps_with_a_prescribed_degree_sequence} which dealt with trees `with a prescribed degree sequence' in the finite-variance regime. Recall that a uniform random bridge $X_k$ in $\mathcal{B}_k^+$ has the same law as the first $k$ steps of a random walk with step distribution $\sum_{i \ge -1} 2^{-i-2} \delta_i$ conditioned on being at $0$ at time $k$. According to Lemma 6 in~\cite{Marzouk:Scaling_limits_of_random_bipartite_planar_maps_with_a_prescribed_degree_sequence}, there exists two constants $c, C > 0$ such that for every $k \ge 1$ and $x \ge 0$, we have
\[\Pr{\max_{1 \le i \le k} X_{k,i} - \min_{1 \le i \le k} X_{k,i} > x} \le C \e^{-cx^2/k}.\]
Let $\zeta_k(T_{A,n})$ denote the number of individuals in $T_{A,n}$ with $k$ offsprings and let $\Delta(T_{A,n})$ be the largest offspring in $T_{A,n}$. Fix $\varepsilon > 0$ and recall the bound $\ln(1-x) \ge -\frac{x}{1-x}$ for $x < 1$. We then have
\begin{align*}
&\Prc{\max_{u \in T_n} \left|\max_{1 \le i \le k_u} \ell(ui) - \min_{1 \le i \le k_u} \ell(ui)\right| \le \varepsilon B_n^{1/2}}{T_{A,n}}
\\
&= \prod_{k=1}^{\Delta(T_{A,n})} \Prc{\max_{1 \le i \le k} X_{k,i} - \min_{1 \le i \le k} X_{k,i} \le \varepsilon B_n^{1/2}}{T_{A,n}}^{\zeta_k(T_{A,n})}
\\
&\ge \exp\left(- C \sum_{k=1}^{\Delta(T_{A,n})} \zeta_k(T_{A,n}) \e^{-c\varepsilon^2B_n/k} (1+o(1))\right),
\end{align*}
and the claim reduces to showing the convergence in probability
\[\sum_{k=1}^{\Delta(T_{A,n})} \zeta_k(T_{A,n}) \e^{-c\varepsilon^2B_n/k}
\cvproba 0.\]
Since $x \mapsto x^2\e^{-x}$ is decreasing on $[2, \infty)$, we have on a set of high probability as $n \to \infty$,
\[\sum_{k=1}^{\Delta(T_{A,n})} \zeta_k(T_{A,n}) \e^{-c\varepsilon^2B_n/k}
\le \sum_{k=1}^{\Delta(T_{A,n})} \frac{k^2 \zeta_k(T_{A,n})}{B_n^2} \times \frac{B_n^2}{\Delta(T_{A,n})^2} \e^{-c\varepsilon^2 B_n/\Delta(T_{A,n})}.\]
Recall that $\Delta(T_{A,n})$ is the largest jump plus one of the {\L}ukasiewicz path $W_n$ so, according to~\eqref{eq:Duquesne_Kortchemski}, the ratio $\Delta(T_{A,n})/B_n$ tends to $0$ in probability and so
\[\frac{B_n^2}{\Delta(T_{A,n})^2} \e^{-c\varepsilon^2 B_n/\Delta(T_{A,n})} \cvproba 0.\]
It only remains to prove that the sequence $\sum_{k=1}^{\Delta(T_{A,n})} \frac{k^2 \zeta_k(T_{A,n})}{B_n^2}$ is bounded in probability in the sense that
\[\lim_{K \to \infty} \limsup_{n \to \infty} \Pr{\sum_{k=1}^{\Delta(T_{A,n})} \frac{k^2 \zeta_k(T_{A,n})}{B_n^2} > K} = 0.\]
Let us intersect the preceding event with $\{\Delta(T_{A,n}) \le B_n\}$ whose probability tends to one. Let us translate our probability in terms of the {\L}ukasiewicz path: we aim at showing
\[\lim_{K \to \infty} \limsup_{n \to \infty} \Pr{\left\{\sum_{i=1}^{\zeta(T_{A,n})} \frac{(W_n(i+1)-W_n(i))^2}{B_n^2} > K\right\} \cap \left\{\max_{1 \le i \le \zeta(T_{A,n})} W_n(i+1)-W_n(i) \le B_n\right\}} = 0.\]
We then use the same reasoning as in the proof of Lemma~\ref{lem:moments_marche_Luka}. For a path $S$ and $n \ge 1$, let us denote by $\varsigma_{A,n}$ the time such that the $n$-th jump of $S$ with values in the set $A-1$ is its $\varsigma_{A,n}$-th jump in total.  Note that our event is shift-invariant so we may replace the excursion $W_n$ by the bridge $S_n$ obtained by conditioning the random walk $S$ with step distribution $\sum_{k \ge -1} \mu_\q(k+1) \delta_k$ to be at $-1$ after its $\varsigma_{A,n}$-th jump. Then, by cutting the time interval in two and using a time-reversibility property for the second half due to Kortchemski~\cite[Proposition~6.8]{Kortchemski:Invariance_principles_for_Galton_Watson_trees_conditioned_on_the_number_of_leaves}, we may in fact only consider the first half of the bridge, i.e. up to time $\varsigma_{A,n/2}$. The latter is absolutely continuous with respect to the unconditioned random walk so, if $(\xi_i)_{i \ge 1}$ are i.i.d. copies of a random variable $\xi$ sampled from $\sum_{k \ge -1} \mu_\q(k+1) \delta_k$ and if now $\varsigma_{A,n}$ denote the least $i \ge 1$ such that $\#\{i \in \{1, \dots, \varsigma_{A,n}\} : \xi_i \in A-1\}=n$, then there exists $C > 0$ such that
\begin{multline*}
\Pr{\left\{\sum_{i=1}^{\zeta(T_{A,n})} \frac{(W_n(i+1)-W_n(i))^2}{B_n^2} > K\right\} \cap \left\{\max_{1 \le i \le \zeta(T_{A,n})} W_n(i+1)-W_n(i) \le B_n\right\}}
\\
\le C \cdot \Pr{\left\{\sum_{i=1}^{\varsigma_{A,n/2}} \frac{\xi_i^2}{B_n^2} > K\right\} \cap \left\{\max_{1 \le i \le \varsigma_{A,n/2}} \xi_i \le B_n\right\}}.
\end{multline*}
Since $\varsigma_{A,n/2}/n$ converges almost surely to $1/(2\mu_\q(A))$ (see e.g.~\cite[Lemma~6.2]{Kortchemski:Invariance_principles_for_Galton_Watson_trees_conditioned_on_the_number_of_leaves}), we may replace $\varsigma_{A,n/2}$ by $n/(2\mu_\q(A))$. 
The Markov inequality then yields
\[\Pr{\left\{\sum_{i=1}^{n/(2\mu_\q(A))} \frac{\xi_i^2}{B_n^2} > K\right\} \cap \left\{\max_{1 \le i \le n/(2\mu_\q(A))} \xi_i \le B_n\right\}}
\le \frac{n}{2K \mu_\q(A) B_n^2} \Es{\xi^2 \ind{\xi \le B_n}},\]
which converges to $1/(K \mu_\q(A))$ and our claim follows.
\end{proof}

\begin{proof}[Proof of Proposition~\ref{prop:marginal_etiquettes_cas_gaussien} in the case $k \ge 2$]
Let us only restrict ourselves to the case $k=2$ to ease the notation since the general case hides no extra difficulty. We sample two independent uniform random vertices of $T_n$, say, $u_n$ and $v_n$, and we let $w_n$ be their most recent ancestor, we denote by $\hat{u}_n$ and $\hat{v}_n$ the children of $w_n$ which are respectively an ancestor of $u_n$ and $v_n$ in order to decompose
\[\ell(u_n) = \ell(w_n) + (\ell(\hat{u}_n)-\ell(w_n)) + (\ell(u_n) - \ell(\hat{u}_n)),\]
and similarly for $v_n$. The important observation is that, conditional on $T_{A,n}$, $u_n$ and $v_n$, the random variables $\ell(w_n)$, $\ell(u_n) - \ell(\hat{u}_n)$ and $\ell(v_n) - \ell(\hat{v}_n)$ are independent.
According to~\eqref{eq:Duquesne_Kortchemski}, we have
\[\frac{B_{\zeta(T_{A,n})}}{\zeta(T_{A,n})} \left(|w_n|, |u_n| - |\hat{u}_n|, |v_n| - |\hat{v}_n|\right)
\cvloi
\left(\min_{r \in [U, V]} \Hexc_r, \Hexc_U - \min_{r \in [U, V]} \Hexc_r, \Hexc_V - \min_{r \in [U, V]}\Hexc_r\right),\]
where $U$ and $V$ are i.i.d uniform random variables on $[0,1]$ independent of $\Hexc$. We claim that, jointly with~\eqref{eq:Duquesne_Kortchemski}, we have
\begin{equation}\label{eq:cv_labels_trois_parties}
\left(\frac{3 \zeta(T_{A,n})}{2 B_{\zeta(T_{A,n})}^2}\right)^{1/2} \left(\frac{\ell(w_n)}{\sqrt{|w_n|}}, \frac{\ell(u_n) - \ell(\hat{u}_n)}{\sqrt{|u_n| - |\hat{u}_n|}}, \frac{\ell(v_n) - \ell(\hat{v}_n)}{\sqrt{|v_n| - |\hat{v}_n|}}\right)
\cvloi \left(G_1, G_2, G_3\right),
\end{equation}
where $G_1$, $G_2$, $G_3$ are i.i.d. standard Gaussian random variables. This actually follows from the arguments used in the proof of Proposition~\ref{prop:marginal_etiquettes_cas_gaussien} in the case $k = 1$ which show not only the convergence of $\ell(u_n)$, but also that if $a_n$ is an ancestor of $u_n$ such that the ratio $|a_n|/|u_n|$ converges in probability to some $a \in (0,1)$ as $n \to \infty$, then we have
\[\left(\frac{3 \zeta(T_{A,n})}{2 B_{\zeta(T_{A,n})}^2}\right)^{1/2} \left(\frac{\ell(a_n)}{\sqrt{|a_n|}}, \frac{\ell(u_n) - \ell(a_n)}{\sqrt{|u_n| - |\hat{u}_n|}}\right)
\cvloi \left(G_1, G_2\right).\]
Indeed, replacing $u_n$ by $a_n$ only replaces $h$ in the last part of the proof in the case $k = 1$ by $ah(1+o(1))$ which shows the convergence of the first marginal, and similarly for the second; the joint convergence holds since they are independent. Recall from Lemma~\ref{lem:deplacement_max_autour_site} that the maximal displacement at a branch-point is small, then the preceding convergence implies that of the first two components in~\eqref{eq:cv_labels_trois_parties}. The convergence of the last one also holds since the role of $u_n$ and $v_n$ is symmetric and so~\eqref{eq:cv_labels_trois_parties} holds by independence.

Since Lemma~\ref{lem:deplacement_max_autour_site} implies that
\[\left(\frac{\zeta(T_{A,n})}{B_{\zeta(T_{A,n})}^2}\right)^{1/2} \left(\ell(\hat{u}_n) - \ell(w_n), \ell(\hat{v}_n) - \ell(w_n)\right)
\cvproba \left(0,0\right),\]
We conclude from~\eqref{eq:cv_labels_trois_parties} that the pair
\[B_{\zeta(T_{A,n})}^{-1/2} (\ell(u_n), \ell(v_n))\]
converges in distribution as $n \to \infty$ towards
\[\frac{2}{3} \left(\sqrt{\min_{r \in [U, V]} \Hexc_r} \cdot G_1 + \sqrt{\Hexc_U - \min_{r \in [U, V]} \Hexc_r} \cdot G_2, \sqrt{\min_{r \in [U, V]} \Hexc_r} \cdot G_1 + \sqrt{\Hexc_V - \min_{r \in [U, V]} \Hexc_r} \cdot G_3\right)
\]
which is indeed distributed as $(\Lab_U, \Lab_V)$.
\end{proof}

\section{Scaling limits of maps}
\label{sec:limite_cartes}

The main goal of this section is to prove Theorem~\ref{thm:cv_cartes}, we also state and prove scaling limits on the profile of distances in Theorem~\ref{thm:profil} below. We first prove that large pointed and non-pointed maps are close, in order to focus on pointed maps. Relying of the description of such maps by labelled trees, we then state and prove Theorem~\ref{thm:profil}. Finally, we prove Theorem~\ref{thm:cv_cartes} in the last three subsections.
The proof of tightness in Theorem~\ref{thm:cv_cartes} follows from the functional convergence in Theorem~\ref{thm:cv_serpents_cartes} as in the pioneer work of Le Gall~\cite{Le_Gall:The_topological_structure_of_scaling_limits_of_large_planar_maps} who considered maps pointed at the origin of the root-edge, see also~\cite{Le_Gall:Uniqueness_and_universality_of_the_Brownian_map,Le_Gall-Miermont:Scaling_limits_of_random_planar_maps_with_large_faces} for maps pointed as here; all these references rely on a different labelled tree obtained by the Bouttier--Di Francesco--Guitter bijection~\cite{Bouttier-Di_Francesco-Guitter:Planar_maps_as_labeled_mobiles}. We recast their proof using the Janson--Stef{\'a}nsson bijection~\cite{Janson-Stefansson:Scaling_limits_of_random_planar_maps_with_a_unique_large_face}.

Throughout this section, we fix $S \in \{V, E, F\}$, and for every $n \ge 1$, we sample a pointed map $(M_n, \star)$ from $\P^{\q, \bullet}_{S=n}$. Recall that $\zeta(M_n)$ denotes the number of edges of $M_n$. Recall that we associate with $(M_n, \star)$ a labelled tree $(T_n, \ell)$ with the same amount of edges $\zeta(M_n)$, and, as discussed in Section~\ref{sec:BGW}, $T_n$ has the law of $T_{A,n}$ where $A = \Z_+$ if $S=E$, and $A = 0$ if $S=V$, and $A=\N$ if $S=F$.

\subsection{On the behaviour of leaves in a large Bienaymé--Galton--Watson tree}

Recall that the leaves of the tree are in one-to-one correspondence with the vertex of $M_n$ different from the distinguished one; we shall need the following two estimates.
First, recall the notation $\lambda(T_n)$ for the number of leaves of $T_n$. For every $0 \le j \le \zeta(T_n)$, let further $\Lambda(T_n, j)$ denote the number of leaves amongst the first $j$ vertices of $T_n$ in lexicographical order, and make $\Lambda$ a continuous function on $[0, \zeta(T_n)]$ after linear interpolation.

\begin{lem}\label{lem:repartition_feuilles}
We have the convergence in probability
\[\left(\frac{\Lambda(T_n, \zeta(T_n) t)}{\lambda(T_n)} ; t \in [0,1]\right) \cvproba (t ; t \in [0,1]).\]
\end{lem}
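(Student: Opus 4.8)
The statement is an "ergodic"-type law of large numbers for the distribution of leaves along the depth-first exploration of a size-conditioned Bienaymé--Galton--Watson tree. The natural strategy is to express the quantity $\Lambda(T_n,j)$ in terms of the {\L}ukasiewicz path: the first $j$ vertices of $T_n$ in lexicographical order correspond to the first $j$ steps of $W_n$, and a vertex is a leaf precisely when the corresponding step is a $-1$ jump of $W_n$. Hence $\Lambda(T_n, \zeta(T_n)t)$ counts the number of $(-1)$-steps of $W_n$ among its first $\lfloor \zeta(T_n)t\rfloor$ steps, and $\lambda(T_n) = \Lambda(T_n,\zeta(T_n))$. So the claim is that this counting function, properly normalised, is asymptotically linear.

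\textbf{Step 1: reduce to the unconditioned walk via the Vervaat/bridge construction.} As in the proof of Lemma~\ref{lem:moments_marche_Luka}, recall that $W_n$ is obtained from a bridge $S_n$ (the random walk $S$ with step law $\sum_{k\ge-1}\mu_\q(k+1)\delta_k$, conditioned appropriately depending on $A$) by a cyclic shift at the location of the overall minimum; a cyclic shift does not change the \emph{total} number of $(-1)$-steps, and it only rotates \emph{where} they sit, which — since we will prove a \emph{uniform} (functional) statement — is harmless. The bridge itself is, on its first half, absolutely continuous with respect to the unconditioned walk $S$ (again exactly as invoked in Lemma~\ref{lem:moments_marche_Luka}), and by the time-reversibility argument of Kortchemski~\cite[Proposition~6.8]{Kortchemski:Invariance_principles_for_Galton_Watson_trees_conditioned_on_the_number_of_leaves} the second half is handled the same way. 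So it suffices to prove the analogous statement for i.i.d.\ steps: if $(\xi_i)_{i\ge1}$ are i.i.d.\ with law $\sum_{k\ge-1}\mu_\q(k+1)\delta_k$, then $\frac1m\#\{i\le mt : \xi_i = -1\} \to \mu_\q(0)\, t$ uniformly in $t\in[0,1]$ by the Glivenko--Cantelli theorem (the classical functional LLN for i.i.d.\ Bernoulli-type variables), and one must keep track that the number of vertices with offspring in $A$ among the first $j$ steps also satisfies its own functional LLN with slope $\mu_\q(A)$, so that reparametrising by $\zeta(T_n)t$ (rather than by the number of steps) gives the clean slope $1$ after dividing by $\lambda(T_n)$. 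Recall here that $\mu_\q(0) = Z_\q^{-1}$ and $n^{-1}\zeta(T_n)\to\mu_\q(A)^{-1}$ by Remark~\ref{rem:nombre_aretes_arbre}, and $n^{-1}\lambda(T_n)\to \mu_\q(0)/\mu_\q(A)$ when $A\ne\{0\}$ (and $\lambda(T_n)=n$ when $A=\{0\}$), so all constants cancel and the limit is exactly the identity function.

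\textbf{Step 2: assemble the functional statement.} Combining the two functional LLNs from Step~1, on the unconditioned walk we get that $\big(\Lambda/\lambda\big)(\,\cdot\,)$ converges uniformly to the identity; transferring back through the absolute continuity (which preserves convergence in probability, since the events involved have probability tending to $1$ and the Radon--Nikodym derivatives are controlled as in~\cite{Kortchemski:Sub_exponential_tail_bounds_for_conditioned_stable_Bienayme_Galton_Watson_trees}) and through the cyclic shift gives the statement for $T_n = T_{A,n}$.

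\textbf{Main obstacle.} The genuinely delicate point is not the law of large numbers itself — that part is routine — but making the transfer from the \emph{unconditioned} walk to the \emph{conditioned} one rigorous at the level of a \emph{uniform-in-}$t$ statement, rather than for a single fixed $t$. The cyclic Vervaat shift is what forces this: after the shift, the leaves that used to be counted on, say, $[0,mt]$ may now be split as $[0,m\theta]\cup[m\theta',m]$ for the random shift amount, so one really needs the functional (Glivenko--Cantelli) form on \emph{both} halves of the bridge, and one needs the second-half time-reversal trick of Kortchemski to put that half in a form where absolute continuity with the i.i.d.\ walk applies. Once the correct uniform statement is in place on $S$, pushing it through the (asymptotically probability-one) absolute-continuity events is standard.
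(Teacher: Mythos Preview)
Your approach is correct and genuinely different from the paper's. The paper does not rebuild the law of large numbers from the {\L}ukasiewicz path: it simply cites Kortchemski~\cite[Corollary~3.3]{Kortchemski:Invariance_principles_for_Galton_Watson_trees_conditioned_on_the_number_of_leaves}, which already gives the uniform estimate on $[\eta,1]$ for every $\eta>0$ (with sub-exponential deviation bounds), and then patches up the interval $[0,\eta]$ by a short \emph{mirror symmetry} observation --- the mirror tree has the same law, and its {\L}ukasiewicz path visits at least as many leaves in its last $k$ steps as the original path does in its first $k$ steps, because in the mirror order some ancestors of a given vertex have already been visited. So the paper's proof is two lines plus a citation, whereas your argument re-derives (a version of) what Kortchemski proved.

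Two remarks on your write-up. First, your ``main obstacle'' can be largely dissolved: since $t\mapsto \Lambda(T_n,\zeta(T_n)t)/\lambda(T_n)$ is nondecreasing from $0$ to $1$ and the target $t\mapsto t$ is continuous, pointwise convergence in probability at each fixed $t$ upgrades automatically to uniform convergence in probability (a P\'olya/Dini-type argument). So you only need the \emph{one-point} LLN to survive the conditioning, and that transfers through the bounded Radon--Nikodym derivative without any functional subtlety; the cyclic shift is then harmless because the uniform statement for the bridge gives the uniform statement for any rotation of it. Second, for the second half of the bridge you invoke Kortchemski's time-reversal, but the cleaner fact here is simply that the increments of a random-walk bridge are \emph{exchangeable}, so the leaf count on $[m/2,m/2+mt]$ has the same law (jointly in $t$) as on $[0,mt]$; this handles the second half directly and works uniformly in~$A$.

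What your route buys is self-containment and transparency about \emph{why} the leaves are homogeneously spread; what the paper's route buys is brevity, since Kortchemski has already done the work.
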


\begin{proof}
Kortchemski~\cite[Corollary~3.3]{Kortchemski:Invariance_principles_for_Galton_Watson_trees_conditioned_on_the_number_of_leaves} (again for trees conditioned by the number of leaves, but it extends to the general case) proved that when we restrict to a time interval $[\eta, 1]$ with $\eta > 0$, the probability that $t\mapsto \frac{\Lambda(T_n, \zeta(T_n) t)}{\lambda(T_n)}$ deviates from the identity decays sub-exponentially fast. We can then extend to the whole segment $[0,1]$ to get our result by `mirror symmetry'. It suffices to observe that the `mirror' {\L}ukasiewicz path visits more leaves in its last $k$ steps than the original {\L}ukasiewicz path in its first $k$ steps; indeed in order to visit a vertex in the original lexicographical order, one must first visits all its ancestors, whereas in the `mirror' order, some of them have been already visited (the root of the tree for example).
\end{proof}

The preceding result states that the leaves of the tree are homogeneously spread. Note that we could replace the leaves by the vertices with offspring in a given set $B \subset \Z_+$. The next result states that the inverse of the number of leaves, normalised to have expectation $1$, converges to $1$ in $L^1$.

\begin{lem}\label{lem:biais_sites_GW}
We have the convergence in probability
\[\lim_{n \to \infty} \Es{\left|\frac{1}{\lambda(T_n)} \frac{1}{\E[\frac{1}{\lambda(T_n)}]} - 1\right|} = 0.\]
\end{lem}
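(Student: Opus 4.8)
The plan is to prove convergence in $L^1$ of the ratio $\frac{1}{\lambda(T_n)}/\E[\frac{1}{\lambda(T_n)}]$ towards $1$ by combining a weak law of large numbers for $\lambda(T_n)$ with a uniform integrability argument for its inverse. Recall from Remark~\ref{rem:nombre_aretes_arbre} that $n^{-1}\zeta(T_n) \cvproba \mu_\q(A)^{-1}$, and in particular, when $A = \{0\}$ so that $T_n$ is conditioned to have exactly $n$ leaves, we simply have $\lambda(T_n) = n$ and the statement is trivial; for the general case $A \in \{\Z_+, \N\}$, Lemma~\ref{lem:repartition_feuilles} (or directly Kortchemski~\cite[Corollary~3.3]{Kortchemski:Invariance_principles_for_Galton_Watson_trees_conditioned_on_the_number_of_leaves}) gives $n^{-1}\lambda(T_n) \cvproba c$ for an explicit constant $c = \mu_\q(0)\mu_\q(A)^{-1} \in (0,1]$. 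Hence $n/\lambda(T_n) \cvproba c^{-1}$.

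First I would upgrade this convergence in probability to convergence of the reciprocal. Write $R_n = n/\lambda(T_n)$, so $R_n \cvproba c^{-1}$ with $R_n \le n$ surely (since $\lambda(T_n) \ge 1$). The key point is to show that $(R_n)_{n \ge 1}$ is uniformly integrable; given that, $\E[R_n] \to c^{-1}$ and moreover $\E[|R_n - c^{-1}|] \to 0$, from which the claimed statement follows after dividing through by $\E[R_n]$ and using that $\E[R_n]$ is bounded away from $0$ and $\infty$. To get uniform integrability it suffices to bound $\E[R_n^p]$ for some $p > 1$, i.e. to show $\E[(n/\lambda(T_n))^p] = O(1)$; equivalently $\E[\lambda(T_n)^{-p}] = O(n^{-p})$. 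This amounts to a lower-tail estimate: one needs $\Pr{\lambda(T_n) \le \varepsilon n}$ to be small enough (say summably small, or sub-polynomially small) uniformly in $\varepsilon$ as $\varepsilon \downarrow 0$, which is precisely the kind of bound furnished by the sub-exponential deviation estimates of Kortchemski. Concretely, $\E[\lambda(T_n)^{-p}] \le (\varepsilon n)^{-p} + n^{-p}\cdot\#\{k \le \varepsilon n\}\cdot \max_{k \le \varepsilon n}\Pr{\lambda(T_n) = k}$ won't quite work directly; instead I would write $\E[\lambda(T_n)^{-p}] \le (\varepsilon n)^{-p}\Pr{\lambda(T_n) > \varepsilon n} + \Pr{\lambda(T_n) \le \varepsilon n}$ and control the second term by the concentration of $\lambda(T_n)/n$ around $c$, choosing $\varepsilon < c$ fixed; this gives $\E[R_n^p] \le (c/2)^{-p} + o(1)$, whence boundedness, hence uniform integrability of $R_n$ itself (take $p=1$ with the bound showing the truncated tails vanish, or more cleanly take any $p>1$).

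The main obstacle I expect is making sure the lower-tail control $\Pr{\lambda(T_n) \le \varepsilon n} \to 0$ is available at the level of generality needed (general $A$, infinite-variance offspring distribution). For $A = \N$ (conditioning on internal vertices) one should reduce to the leaf-conditioned case via the bijective/absolute-continuity devices already used repeatedly in the paper, or invoke the relevant result in~\cite{Kortchemski:Invariance_principles_for_Galton_Watson_trees_conditioned_on_the_number_of_leaves} directly, since $\lambda(T_n) + (\text{number of internal vertices}) = \zeta(T_n)+1$ ties $\lambda$ to $\zeta$, and $\zeta(T_n)/n$ concentrates. In fact the cleanest route is: $\lambda(T_n) = \zeta(T_n) + 1 - \iota(T_n)$ where $\iota$ counts internal vertices; both $\zeta(T_n)/n$ and $\iota(T_n)/n$ concentrate (the latter equals $n$ when $A = \N$, and concentrates by Remark~\ref{rem:nombre_aretes_arbre} when $A = \Z_+$), so $\lambda(T_n)/n$ concentrates around a strictly positive constant in all three cases, and in particular $\Pr{\lambda(T_n) \le \varepsilon n} \to 0$ for $\varepsilon$ below that constant. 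With this in hand the uniform integrability argument closes, and the lemma follows.
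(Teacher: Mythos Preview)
Your overall strategy is correct and matches what the paper does (it simply cites~\cite[Lemma~8]{Marzouk:Scaling_limits_of_random_bipartite_planar_maps_with_a_prescribed_degree_sequence} and observes that Kortchemski's estimates carry over to the present setting): show $\lambda(T_n)/n$ concentrates around a positive constant $c$, then prove uniform integrability of $R_n = n/\lambda(T_n)$ to upgrade to $L^1$ convergence.

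There is, however, a genuine gap in the execution. From the decomposition
\[
\E[R_n^p] \le \varepsilon^{-p} + n^p \,\Pr{\lambda(T_n) \le \varepsilon n},
\]
you conclude $\E[R_n^p] \le (c/2)^{-p} + o(1)$ by invoking only ``the concentration of $\lambda(T_n)/n$ around $c$'', and your ``cleanest route'' at the end makes this explicit: you only establish $\Pr{\lambda(T_n) \le \varepsilon n} \to 0$. That is not enough. You need $n^p\,\Pr{\lambda(T_n) \le \varepsilon n} \to 0$, i.e.\ a quantitative lower-tail bound decaying faster than any polynomial (or at least faster than $n^{-p}$ for some $p>1$). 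Mere convergence in probability cannot rule out, say, $\Pr{\lambda(T_n)=1}$ being of order $n^{-1/2}$, which would make $\E[R_n]$ blow up.

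The fix is exactly the ingredient you mention earlier but then drop: Kortchemski's sub-exponential deviation estimates (e.g.~\cite[Theorem~1 or Corollary~3.3]{Kortchemski:Invariance_principles_for_Galton_Watson_trees_conditioned_on_the_number_of_leaves}, extended to general $A$ as in Section~8 there) give $\Pr{\lambda(T_n) \le \varepsilon n} \le C\exp(-c' n^{\gamma})$ for some $\gamma>0$ once $\varepsilon<c$, which kills the factor $n^p$ and closes the argument. Your final paragraph should invoke these bounds rather than bare concentration.
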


This convergence is~\cite[Lemma~8]{Marzouk:Scaling_limits_of_random_bipartite_planar_maps_with_a_prescribed_degree_sequence} in the finite-variance regime; the proof applies \emph{mutatis mutandis} in our case since the arguments used there, which are due to Kortchemski~\cite{Kortchemski:Invariance_principles_for_Galton_Watson_trees_conditioned_on_the_number_of_leaves}, hold in the more our general case. 
Following arguments from~\cite{Abraham:Rescaled_bipartite_planar_maps_converge_to_the_Brownian_map, Bettinelli-Jacob-Miermont:The_scaling_limit_of_uniform_random_plane_maps_via_the_Ambjorn_Budd_bijection, Bettinelli-Miermont:Compact_Brownian_surfaces_I_Brownian_disks}, it was then shown in~\cite[Proposition~12]{Marzouk:Scaling_limits_of_random_bipartite_planar_maps_with_a_prescribed_degree_sequence} that Lemma~\ref{lem:biais_sites_GW} yields the following comparison between pointed and non-pointed maps.

\begin{prop}\label{prop:biais_cartes_Boltzmann_pointees}
Let $\phi : \Mapp \to \Map : (M, \star) \mapsto M$ and let $\phi_* \P^{\q, \bullet}_{S=n}$ be the push-forward measure induced on $\Map$ by $\P^{\q, \bullet}_{S=n}$, then
\[\left\|\P^\q_{S=n} - \phi_* \P^{\q, \star}_{S=n}\right\|_{TV} \cv 0,\]
where $\|\cdot\|_{TV}$ refers to the total variation norm.
\end{prop}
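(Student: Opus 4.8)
The plan is to recognise $\phi_* \P^{\q, \bullet}_{S=n}$ as a size-biasing of $\P^\q_{S=n}$ and then to reduce everything to Lemma~\ref{lem:biais_sites_GW}. Write $\#V(M)$ for the number of vertices of a map $M$, and let $M_n$ denote a sample from $\P^\q_{S=n}$. The first step is the identity
\[
\phi_* \P^{\q, \bullet}_{S=n}(M) \;=\; \frac{\#V(M)}{\Es{\#V(M_n)}}\, \P^\q_{S=n}(M), \qquad M \in \Map_{S=n},
\]
which follows from $w^{\q, \bullet}(M, \star) = w^\q(M)$ by summing over the $\#V(M)$ choices of the distinguished vertex, the relevant normalising masses being $W^\q_{S=n} = \sum_{M \in \Map_{S=n}} w^\q(M)$ and $W^{\q, \bullet}_{S=n} = \sum_{M \in \Map_{S=n}} \#V(M)\, w^\q(M)$. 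In particular, when $S = V$ the quantity $\#V(M)$ is constant over $\Map_{S=n}$, the two laws coincide, and there is nothing to prove; the content is really the cases $S \in \{E, F\}$. From the displayed identity one immediately gets
\[
\left\|\P^\q_{S=n} - \phi_* \P^{\q, \bullet}_{S=n}\right\|_{TV} \;=\; \frac12\, \Es{\left|\frac{\#V(M_n)}{\Es{\#V(M_n)}} - 1\right|},
\]
so it suffices to prove that $\#V(M_n)/\Es{\#V(M_n)} \to 1$ in $L^1$.

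Next I would change measure back to the pointed law. Let $(M_n^\bullet, \star_n)$ be sampled from $\P^{\q, \bullet}_{S=n}$, set $N_n = \#V(M_n^\bullet)$ and $m_n = \Es{N_n^{-1}}$. The same identity exhibits $\P^\q_{S=n}$ as the size-biasing of $\phi_* \P^{\q, \bullet}_{S=n}$ by $1/\#V(M)$, whence $\Es{\#V(M_n)} = m_n^{-1}$ and $\Es{h(M_n)} = m_n^{-1}\,\Es{h(M_n^\bullet)\, N_n^{-1}}$ for every bounded $h$. Applying this with $h(M) = \left|\#V(M)/\Es{\#V(M_n)} - 1\right|$ and simplifying yields
\[
\Es{\left|\frac{\#V(M_n)}{\Es{\#V(M_n)}} - 1\right|} \;=\; \frac{1}{m_n}\Es{\frac{1}{N_n}\left|m_n N_n - 1\right|} \;=\; \frac{1}{m_n}\Es{\left|m_n - N_n^{-1}\right|} \;=\; \Es{\left|\frac{N_n^{-1}}{\Es{N_n^{-1}}} - 1\right|}.
\]
Now recall (Section~\ref{sec:bijection}) that in the bijection with labelled trees the leaves of the tree $T_{A,n}$ associated with $(M_n^\bullet, \star_n)$ are in one-to-one correspondence with the vertices of $M_n^\bullet$ other than $\star_n$, so that $N_n = \lambda(T_{A,n}) + 1$. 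Since $\lambda(T_{A,n}) \to \infty$ in probability, the last expectation differs by $o(1)$ from $\Es{\left|\lambda(T_{A,n})^{-1}/\Es{\lambda(T_{A,n})^{-1}} - 1\right|}$, which tends to $0$ by Lemma~\ref{lem:biais_sites_GW}. This is, up to notation, the argument of~\cite[Proposition~12]{Marzouk:Scaling_limits_of_random_bipartite_planar_maps_with_a_prescribed_degree_sequence}.

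I do not expect a genuine obstacle: all the probabilistic work is encapsulated in Lemma~\ref{lem:biais_sites_GW} (which itself rests on Kortchemski's estimates), while the remaining steps are elementary changes of measure. The only points deserving (mild) care are the bookkeeping of the $+1$ shift between $\#V(M)$ and the number of leaves of the associated tree, and the verification that passing from $\lambda(T_{A,n})^{-1}$ to $(\lambda(T_{A,n})+1)^{-1}$ inside the normalised $L^1$ quantity is harmless — this follows from the uniform integrability of $\lambda(T_{A,n})^{-1}/\Es{\lambda(T_{A,n})^{-1}}$ afforded by Lemma~\ref{lem:biais_sites_GW}, together with the multiplicative factor $\lambda(T_{A,n})/(\lambda(T_{A,n})+1) \in (0,1]$ converging to $1$ in probability.
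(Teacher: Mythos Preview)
Your proposal is correct and follows essentially the same approach as the paper: the paper simply refers to \cite[Proposition~12]{Marzouk:Scaling_limits_of_random_bipartite_planar_maps_with_a_prescribed_degree_sequence} and remarks that the total variation is bounded by the expectation appearing in Lemma~\ref{lem:biais_sites_GW} up to a unit shift in $\lambda(T_n)$, and you have spelled out precisely this reduction via the size-biasing identity and the change of measure. Your treatment of the $+1$ shift (using the uniform integrability implicit in Lemma~\ref{lem:biais_sites_GW} together with $\lambda(T_{A,n})/(\lambda(T_{A,n})+1)\to 1$) is the clean way to handle what the paper leaves as a one-line remark.
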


Indeed, one can bound this total variation distance by the expectation in Lemma~\ref{lem:biais_sites_GW} with $\lambda(T_n)-1$ instead of $\lambda(T_n)$.
Observe that if $(M_n, \star)$ is sampled from $\P^{\q, \bullet}_{S=n}$, then, conditional on $M_n$, the vertex $\star$ is uniformly distributed in $M_n$.

\subsection{Radius and profile}
\label{sec:profil}

Although for $\alpha \in (1,2)$, we shall only obtain a convergence along subsequences of the metric spaces, because these subsequential limits are not characterised, still we do obtain some information about distances in large maps. Recall that we work with pointed maps $(M_n, \star)$ sampled from $\P^{\q, \bullet}_{S=n}$, but according to the preceding section, this pair is close to a non-pointed map sampled from $\P^{\q}_{S=n}$, in which we sample a vertex uniformly at random so the next result also holds in this context. Recall that $\zeta(M_n)$ denotes the number of edges of $M_n$, let us denote by $\upsilon(M_n)$ its number of vertices.

Let
\[R(M_n) = \max_{x \in M_n} \dgr(x, \star)\]
be the \emph{radius} of the map; define also a point measure on $\Z_+$, called the \emph{profile of distances}, by
\[\rho_{M_n}(k) = \#\{x \in M_n : \dgr(x, \star)=k\}.
\qquad k \in \Z_+.\]
Finally, let $\Delta(M_n)$ be the longest distance in $M_n$ between $\star$ and the two extremities of the root-edge (the other extremity is at distance $\Delta(M_n)-1$).

\begin{thm}
\label{thm:profil}
Let $\overline{\Lab} = \sup_{t \in [0,1]} \Lab_t$ and $\underline{\Lab} = \inf_{t \in [0,1]} \Lab_t$ and observe that $\overline{\Lab}$ and $-\underline{\Lab}$ have the same law by symmetry. Then the following convergences in distribution hold as $n \to \infty$:
\begin{enumerate}
\item $B_{\zeta(M_n)}^{-1/2} R(M_n) \to \overline{\Lab} - \underline{\Lab}$;
\item $B_{\zeta(M_n)}^{-1/2} \Delta(M_n) \to \overline{\Lab}$;
\item For every continuous and bounded function $\varphi$,
\[\frac{1}{\upsilon(M_n)} \sum_{k \ge 0} \varphi(B_{\zeta(M_n)}^{-1/2} k) \rho_{M_n}(k)
\cvloi
\int_0^1 \varphi(\Lab_t - \underline{\Lab}) \d t.\]
\end{enumerate}
\end{thm}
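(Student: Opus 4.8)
The plan is to transport everything through the Janson--Stef\'{a}nsson bijection of Section~\ref{sec:cartes_arbres} and reduce the three assertions to the functional convergence of the label process in Theorem~\ref{thm:cv_serpents_cartes}, together with the equidistribution of leaves from Lemma~\ref{lem:repartition_feuilles}. First I would record the relevant dictionary: by Property~(i) of the bijection, the vertices of $M_n$ other than $\star$ are in one-to-one correspondence with the leaves of $T_n$, with $\dgr(x_u,\star) = \ell(u) - \min_{v\in T_n}\ell(v)+1$ for a leaf $u$ and its associated vertex $x_u$; moreover $\upsilon(M_n) = \lambda(T_n)+1$. A useful elementary remark is that every vertex $v$ of $T_n$ has a leaf descendant carrying the same label, obtained by repeatedly passing to the last child (each such step preserves the label, since the increment sequence attached to an internal vertex is a bridge in $\mathcal{B}^+$, hence ends at $0$); consequently $\max_{v\in T_n}\ell(v)$ and $\min_{v\in T_n}\ell(v)$ are attained at leaves, and they equal $\max_{0\le j\le\zeta(T_n)}L_n(j)$ and $\min_{0\le j\le\zeta(T_n)}L_n(j)$ respectively. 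Throughout I would abbreviate $B_{\zeta(M_n)}=B_{\zeta(T_n)}$, recall that $B_{\zeta(T_n)}\to\infty$, and use that the time-rescaling by $\zeta(T_n)$ in Theorem~\ref{thm:cv_serpents_cartes} matches the one appearing here.

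For~(i), since $\star$ lies at distance $0$ from itself and every other vertex is some $x_u$, one has $R(M_n) = \max_{u\text{ leaf}}\dgr(x_u,\star) = \max_{j}L_n(j) - \min_{j}L_n(j) + 1$; applying the continuous map $f\mapsto \sup f - \inf f$ on $\mathscr{C}([0,1])$ to Theorem~\ref{thm:cv_serpents_cartes} yields $B_{\zeta(M_n)}^{-1/2}R(M_n)\cvloi \overline{\Lab}-\underline{\Lab}$, the rescaled additive constant $1$ being negligible. For~(ii) I would read off $\Delta(M_n)$ from the bijection: on the event $\{\min_{v\in T_n}\ell(v)<0\}$, whose probability tends to $1$, the root-edge of $M_n$ is the image, after the merging step, of the edge drawn from the tree-root $\varnothing$ to the first later vertex of label $-1$; the merging identifies $\varnothing$ with the last vertex of $T_n$ in lexicographic order (the rightmost leaf, which carries label $0$), so that the two endpoints of the root-edge are at distances $1-\min_{v}\ell(v)$ and $-\min_{v}\ell(v)$ from $\star$, and hence $\Delta(M_n) = 1-\min_{j}L_n(j)$. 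The continuous map $f\mapsto -\inf f$ then gives $B_{\zeta(M_n)}^{-1/2}\Delta(M_n)\cvloi -\underline{\Lab}$, which has the same law as $\overline{\Lab}$ by the symmetry of $\Lab$.

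For~(iii) I would introduce the random probability measure $\nu_n = \lambda(T_n)^{-1}\sum_{u\text{ leaf}}\delta_{j(u)/\zeta(T_n)}$ on $[0,1]$, where $j(u)$ denotes the lexicographic rank of the leaf $u$; Lemma~\ref{lem:repartition_feuilles} says precisely that the cumulative distribution function of $\nu_n$ converges uniformly to the identity, in probability, so $\nu_n$ converges weakly to the Lebesgue measure on $[0,1]$, in probability. Setting $g_n(t) = B_{\zeta(M_n)}^{-1/2}\bigl(L_n(\zeta(T_n)t)-\min_{s}L_n(\zeta(T_n)s)+1\bigr)$, so that $\dgr(x_u,\star) = B_{\zeta(M_n)}^{1/2}\,g_n(j(u)/\zeta(T_n))$, the left-hand side of~(iii) equals
\[\frac{\varphi(0)}{\upsilon(M_n)} + \frac{\lambda(T_n)}{\upsilon(M_n)}\int_0^1 \varphi\bigl(g_n(t)\bigr)\,\d\nu_n(t).\]
By Theorem~\ref{thm:cv_serpents_cartes} and continuity of $f\mapsto f-\inf f$ on $\mathscr{C}([0,1])$, $g_n\cvloi \Lab_\cdot - \underline{\Lab}$ in $\mathscr{C}([0,1])$, hence $\varphi\circ g_n$ converges uniformly; combining this with $\nu_n\to\mathrm{Leb}$ --- most cleanly after invoking Skorokhod's representation theorem to make both convergences almost sure --- gives $\int_0^1\varphi(g_n)\,\d\nu_n\to\int_0^1\varphi(\Lab_t-\underline{\Lab})\,\d t$, while $\varphi(0)/\upsilon(M_n)\to0$ and $\lambda(T_n)/\upsilon(M_n)\to1$, which is the assertion.

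The genuinely substantive ingredients, namely the joint functional convergence of the rescaled label process (Theorem~\ref{thm:cv_serpents_cartes}) and the homogeneous spreading of the leaves (Lemma~\ref{lem:repartition_feuilles}), are already available, so the work here is essentially bookkeeping. The two points that ask for a little care are the explicit identification of $\Delta(M_n)$ from the combinatorics of the bijection (tracking the merging operation and the $\pm1$ label shifts), and, in~(iii), the passage from the weak convergence of $\nu_n$ together with the \emph{random} uniform convergence of the integrands $\varphi\circ g_n$ to convergence of the integrals, which is why I would route that step through a Skorokhod coupling.
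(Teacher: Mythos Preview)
Your proposal is correct and follows essentially the same route as the paper: both reduce (i) and (ii) to the continuous mapping theorem applied to Theorem~\ref{thm:cv_serpents_cartes} via the identities $R(M_n)=\max L_n-\min L_n+1$ and $\Delta(M_n)=1-\min L_n$, and both handle (iii) by combining Theorem~\ref{thm:cv_serpents_cartes} with the equidistribution of leaves from Lemma~\ref{lem:repartition_feuilles}. The only cosmetic difference is that you package the leaf sum as an integral against the empirical measure $\nu_n$ and invoke Skorokhod's representation, whereas the paper writes it as $\int_0^1 \varphi(B_{\zeta(T_n)}^{-1/2}(L_n(g(\lceil\lambda(T_n)t\rceil))-\underline{L}_n))\,\d t$ using the inverse map $g$ of $\Lambda$; these are equivalent formulations of the same computation.
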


\begin{proof}
We rely on the bijection with the labelled tree $(T_n, \ell)$. Let us set $\underline{L}_n = \min_{1 \le i \le \lambda(T_n)} L_n(i)-1$; in this bijection, we have
\[R(M_n) = \max_{0 \le i \le \zeta(T_n)} L_n(i) - \underline{L}_n,\]
so the first convergence immediately follows from Theorem~\ref{thm:cv_serpents_cartes}. Similarly, the root-vertex of the tree is the farthest extremity of the root-edge of $M_n$ from $\star$, so
\[\Delta(M_n) = - \underline{L}_n,\]
and the second convergence is again an immediate consequence of Theorem~\ref{thm:cv_serpents_cartes}. We need a little more work for the third assertion. Our argument shall also serve later in Section~\ref{sec:tension_cartes} and~\ref{sec:carte_brownienne}.

Recall the notation $\lambda(T_n)$ for the number of leaves of $T_n$, which equals $\upsilon(M_n)-1$, and $\Lambda(T_n, j)$ for the number of leaves amongst the first $j$ vertices of $T_n$ in lexicographical order. For every $1 \le i \le \lambda(T_n)$, let $g(i) \in \{1, \dots, \zeta(T_n)\}$ be the index such that $u_{g(i)}$ is the $i$-th leaf of $T_n$. Since $j \mapsto \Lambda(T_n, j)$ is non-decreasing, Lemma~\ref{lem:repartition_feuilles} is equivalent to
\begin{equation}\label{eq:approximation_sites_aretes_carte}
\left(\frac{g(\lambda(T_n) t)}{\zeta(T_n)} ; t \in [0,1]\right) \cvproba (t ; t \in [0,1]),
\end{equation}
where as usual, we have linearly interpolated $g$ between integer values.

. Then observe that
\begin{align*}
\frac{1}{\upsilon(M_n)-1} \sum_{k \ge 0} \varphi(B_{\zeta(M_n)}^{-1/2} k) \rho_{M_n}(k)
&= \frac{1}{\lambda(T_n)} \varphi(0) + \frac{1}{\lambda(T_n)} \sum_{i = 1}^{\lambda(T_n)} \varphi\left(B_{\zeta(T_n)}^{-1/2} \left(L_n(g(k)) - \underline{L}_n\right)\right)
\\
&= \frac{1}{\lambda(T_n)} \varphi(0) + \int_0^1 \varphi\left(B_{\zeta(T_n)}^{-1/2} \left(L_n(g(\lceil \lambda(T_n)t\rceil)) - \underline{L}_n\right)\right) \d t,
\end{align*}
which converges in law to $\int_0^1 \varphi(\Lab_t - \underline{\Lab}) \d t$ according to~\eqref{eq:approximation_sites_aretes_carte} and Theorem~\ref{thm:cv_serpents_cartes}.
\end{proof}

\subsection{The Gromov--Hausdorff--Prokhorov topology}

Let us next briefly define this topology used in Theorem~\ref{thm:cv_cartes} in a way that is tailored for our purpose. Let $(X, d_x, m_x)$ and $(Y, d_Y, m_y)$ be two compact metric spaces equipped with a Borel probability measure. A \emph{correspondence} between these spaces is a subset $R \subset X \times Y$ such that for every $x \in X$, there exists $y \in Y$ such that $(x,y) \in R$ and vice-versa. The \emph{distortion} of $R$ is defined as
\[\mathrm{dis}(R) = \sup\left\{\left|d_X(x,x') - d_Y(y,y')\right| ; (x,y), (x', y') \in R\right\}.\]
Then we define the Gromov--Hausdorff--Prokhorov distance between these spaces as the infimum of all those $\varepsilon > 0$ such that there exists a coupling $\nu$ between $m_X$ and $m_Y$ and a compact correspondence $R$ between $X$ and $Y$ such that
\[\nu(R) \ge 1-\varepsilon \quad\text{and}\quad \mathrm{dis}(R) \le 2 \varepsilon.\]
This definition is not the usual one and is due to Miermont~\cite[Proposition~6]{Miermont:Tessellations_of_random_maps_of_arbitrary_genus}. We refer to Section~6 for more details on the Gromov--Hausdorff--Prokhorov distance. Let us only recall that it makes separable and complete the set of isometry classes of compact metric spaces equipped with a Borel probability measure.

If $(M_n\setminus\{\star\}, \dgr, \pgr)$ is the metric measured space given by the vertices of $M_n$ different from $\star$, their graph distance \emph{in $M_n$} and the uniform probability measure, then the Gromov--Hausdorff--Prokhorov distance between $(M_n, \dgr, \pgr)$, and $(M_n\setminus\{\star\}, \dgr, \pgr)$ is bounded by one so it suffices to prove that from every increasing sequence of integers, one can extract a subsequence along which the convergence in distribution
\begin{equation}\label{eq:convergence_cartes_pointees}
\left(M_n\setminus\{\star\}, B_{\zeta(M_n)}^{-1/2} \dgr, \pgr\right)
\cvloi
(\Bmap, \dBmap, \pBmap),
\end{equation}
holds for the Gromov--Hausdorff--Prokhorov topology.

\subsection{Tightness of distances}

Recall that the leaves of the labelled tree $(T_n, \ell)$ associated with $(M_n, \star)$ are in bijection with the vertices of $M_n$ different from $\star$. As for the internal vertices of $T_n$, they are each identified with their last child and so to each such internal vertex corresponds a leaf (the end of the right-most ancestral line starting from them) and therefore a vertex of $M_n\setminus\{\star\}$. Let $\varphi : T_n \to M_n\setminus\{\star\}$ be the map which associates with each vertex of $T_n$ its corresponding vertex of $M_n$. Let us list the vertices of $T_n$ as $u_0 < u_1 < \dots < u_{\zeta(M_n)}$ in lexicographical order and for every $i,j \in \{0, \dots, \zeta(M_n)\}$, we set
\[d_n(i,j) = \dgr(\varphi(u_i), \varphi(u_j)),\]
where $\dgr$ is the graph distance of $M_n$. We then extend $d_n$ to a continuous function on $[0,n]^2$ by `bilinear interpolation' on each square of the form $[i,i+1] \times [j,j+1]$ as in~\cite[Section~2.5]{Le_Gall:Uniqueness_and_universality_of_the_Brownian_map} or~\cite[Section~7]{Le_Gall-Miermont:Scaling_limits_of_random_planar_maps_with_large_faces}.

Define for every $t \in [0,1]$:
\[H_{(n)}(t)= \frac{B_{\zeta(M_n)}}{\zeta(M_n)} H_n(\zeta(M_n) t),
\qquad\text{and}\qquad
L_{(n)}(t) = B_{\zeta(M_n)}^{-1/2} L_n(\zeta(M_n) t),\]
and for every $s,t \in [0,1]$:
\begin{align*}
d_{(n)}(s, t) &= B_{\zeta(M_n)}^{-1/2} d_n(\zeta(M_n) s, \zeta(M_n) t),
\\
D_{L_{(n)}}(s, t) &= L_{(n)}(s) + L_{(n)}(t) - 2 \max\left\{\min_{r \in [s \wedge t, s \vee t]} L_{(n)}(r); \min_{r \in [0, s \wedge t] \cup [s \vee t, 1]} L_{(n)}(r)\right\}.
\end{align*}
Using the triangle inequality at a vertex where a geodesic from $\varphi(u_i)$ to $\star$ and a geodesic from $\varphi(u_j)$ to $\star$ in $M_n$ merge, Le Gall~\cite[Equation 4]{Le_Gall:Uniqueness_and_universality_of_the_Brownian_map} (see also~\cite[Lemma 3.1]{Le_Gall:The_topological_structure_of_scaling_limits_of_large_planar_maps} for a detailed proof) obtained the bound
\begin{equation}\label{eq:bornes_distances_carte}
d_{(n)}(s, t) \le D_{L_{(n)}}(s, t) + 2 B_{\zeta(M_n)}^{-1/2},
\end{equation}
for every $s,t \in [0,1]$ such that both $\zeta(M_n) s$ and $\zeta(M_n) t$ are integers, but then also in the other cases. Let us point out that this bound was obtained using the coding of the Bouttier--Di Francesco--Guitter bijection, where $L_n$ is the so-called \emph{white label function} of the two-type tree in the contour order. Nonetheless, as proved in~\cite[Lemma~1]{Marzouk:Scaling_limits_of_random_bipartite_planar_maps_with_a_prescribed_degree_sequence}, this process equals (deterministically) our process $L_n$ when the trees are related by the Janson--Stef{\'a}nsson bijection.

Recall from Section~\ref{sec:BGW} that $T_n$ has the law of $T_{A,n}$ where $A = \Z_+$ if $S=E$, and $A = 0$ if $S=V$, and $A=\N$ if $S=F$. Then Theorem~\ref{thm:cv_serpents_cartes} yields the convergence in distribution of continuous paths
\[\left(H_{(n)}(t), L_{(n)}(t), D_{L_{(n)}}(s, t)\right)_{s,t \in [0,1]}
\cvloi (\Hexc_t, \Lab_t, D_\Lab(s,t))_{s,t \in [0,1]},\]
where, similarly to the discrete setting,
\[D_\Lab(s,t) = \Lab_s + \Lab_t - 2 \max\left\{\min_{r \in [s \wedge t, s \vee t]} \Lab_r; \min_{r \in [0, s \wedge t] \cup [s \vee t, 1]} \Lab_r\right\}.\]
The bound~\eqref{eq:bornes_distances_carte} then easily shows that $d_{(n)}$ is tight. Therefore, from every increasing sequence of integers, we can extract a subsequence along which we have
\begin{equation}\label{eq:convergence_distances_sous_suite}
\left(H_{(n)}(t), L_{(n)}(t), d_{(n)}(s, t)\right)_{s,t \in [0,1]}
\cvloi
(\Hexc_t, \Lab_t, \dBmap(s,t))_{s,t \in [0,1]},
\end{equation}
where $(\dBmap(s,t))_{s,t \in [0,1]}$ depends a priori on the subsequence and satisfies $\dBmap \le D_\Lab$, see~\cite[Proposition~3.2]{Le_Gall:The_topological_structure_of_scaling_limits_of_large_planar_maps} for a detailed proof in a similar context.

In the next subsections, we implicitly restrict ourselves to a subsequence along which~\eqref{eq:convergence_distances_sous_suite} holds.

\subsection{Tightness of metric spaces}
\label{sec:tension_cartes}

Appealing to Skorokhod's representation Theorem, let us assume that the convergence~\eqref{eq:convergence_distances_sous_suite} holds almost surely (along the appropriate subsequence). We claim that, deterministically, the convergence~\eqref{eq:convergence_cartes_pointees} then holds. Let us first construct the limit space. As limit of the sequence $(d_{(n)})_{n \ge 1}$, the fonction $\dBmap$, which is continuous on $[0,1]^2$, is a pseudo-distance. We then define an equivalence relation on $[0,1]$ by setting
\[s \approx t
\qquad\text{if and only if}\qquad
\dBmap(s,t) = 0,\]
and we let $\Bmap$ be the quotient $[0,1] / \approx$, equipped with the metric induced by $\dBmap$, which we still denote by $\dBmap$. We let $\Pi$ be the canonical projection from $[0,1]$ to $\Bmap$ which is continuous (since $\dBmap$ is) so $(\Bmap, \dBmap)$ is a compact metric space, which finally we endow with the Borel probability measure $\pBmap$ given by the push-forward by $\Pi$ of the Lebesgue measure on $[0,1]$.

Recall our definition of the Gromov--Hausdorff--Prokhorov distance. Recall from Section~\ref{sec:profil} that for every $1 \le i \le \lambda(T_n)$, we denote by $g(i) \in \{1, \dots, \zeta(T_n)\}$ the index such that $u_{g(i)}$ is the $i$-th leaf of $T_n$, so the sequence $(\varphi(u_{g(i)}))_{1 \le i \le \lambda(T_n)}$ lists \emph{without redundancies} the vertices of $M_n$ different from $\star$. The set
\[\mathscr{R}_n = \left\{\left(\varphi(u_{g(\lceil \lambda(T_n) t\rceil)}), \Pi(t)\right) ; t \in [0,1]\right\}.\]
is a correspondence between $(M_n^\star\setminus\{\star\}, B_{\zeta(M_n)}^{-1/2} \dgr, \pgr)$ and $(\Bmap, \dBmap, \pBmap)$. Let further $\nu$ be the coupling between $\pgr$ and $\pBmap$ given by
\[\int_{M_n^\star\setminus\{\star\} \times \Bmap} \phi(v, x) \d\nu(v,x) = \int_0^1 \phi\left(\varphi(u_{g(\lceil \lambda(T_n) t\rceil)}), \Pi(t)\right) \d t,\]
for every test function $\phi$. Then $\nu$ is supported by $\mathscr{R}_n$ by construction. Finally, the distortion of $\mathscr{R}_n$ is given by
\[\sup_{s,t \in [0,1]} \left|d_{(n)}\left(\frac{g(\lceil \lambda(T_n) s\rceil)}{\zeta(T_n)}, \frac{g(\lceil \lambda(T_n) t\rceil)}{\zeta(T_n)}\right) - \dBmap(s,t)\right|,\]
which, appealing to~\eqref{eq:approximation_sites_aretes_carte}, tends to $0$ whenever the convergence~\eqref{eq:convergence_distances_sous_suite} holds, which concludes the proof of the tightness.

\subsection{Characterisation of the limit in the Brownian case}
\label{sec:carte_brownienne}

In this last subsection, we assume that $\alpha=2$ and we prove that~\eqref{eq:convergence_distances_sous_suite} holds without extracting a subsequence, and then so does~\eqref{eq:convergence_cartes_pointees}, with a limit which we next recall, following Le Gall~\cite{Le_Gall:The_topological_structure_of_scaling_limits_of_large_planar_maps} to which we refer for details.
First, we view $D_\Lab$ as a function on the tree $\CRT$ by setting
\[D_\Lab(x,y) = \inf\left\{D_\Lab(s,t) ; s,t \in [0,1], x=\pi(s) \text{ and }  y=\pi(t)\right\},\]
for every $x, y \in \CRT$, where we recall the notation $\pi : [0,1] \to \CRT = [0,1] / \sim$ for the canonical projection. Then we put
\[\dBmap^\ast(x,y) = \inf\left\{\sum_{i=1}^k D_\Lab(a_{i-1}, a_i) ; k \ge 1, (x=a_0, a_1, \dots, a_{k-1}, a_k=y) \in \CRT\right\}.\]
The function $\dBmap^\ast$ is a pseudo-distance on $\CRT$ which can be seen as a pseudo-distance on $[0,1]$ by setting $\dBmap^\ast(s,t) = \dBmap^\ast(\pi(s),\pi(t))$ for every $s,t \in [0,1]$.

As functions on $\CRT^2$, we clearly have $\dBmap^\ast \le D_\Lab$ and in fact, $\dBmap^\ast$ is the largest pseudo-distance on $\CRT$ satisfying this property. Indeed, if $D$ is another such pseudo-distance, then for every $x,y \in \CRT$, for every $k \ge 1$ and every $a_0, a_1, \dots, a_{k-1}, a_k \in \CRT$ with $a_0=x$ and $a_k=y$, by the triangle inequality, $D(x, y) \le \sum_{i=1}^k D(a_{i-1}, a_i) \le \sum_{i=1}^k D_\Lab(a_{i-1}, a_i)$ and so $D(x, y) \le \dBmap^\ast(x, y)$. Furthermore, if we view $\dBmap^\ast$ as a function on $[0,1]^2$, then for all $s,t \in [0,1]$ such that $\pi(s) = \pi(t)$ we have $\dBmap^\ast(\pi(s),\pi(t)) = 0$. We deduce from the previous maximality property that $\dBmap^\ast$ is the largest pseudo-distance $D$ on $[0,1]$ satisfying the following two properties:
\[D \le D_\Lab
\qquad\text{and}\qquad
\pi(s) = \pi(t) \quad\text{implies}\quad D(s,t) = 0.\]
We point out that since $\Hexc$ is $\sqrt{2}$ times the standard Brownian excursion, then our process $\Lab$ corresponds to $(\frac{8}{9})^{1/4} Z$ where $Z$ is used to define the Brownian map in~\cite{Le_Gall:The_topological_structure_of_scaling_limits_of_large_planar_maps} and in subsequent paper so the standard Brownian map is $(\Bmap, (\frac{9}{8})^{1/4}\dBmap^\ast, \pBmap)$.

Let $\dBmap$ be a limit in~\eqref{eq:convergence_distances_sous_suite} and note that it satisfies the two preceding properties; we claim that
\[\dBmap = \dBmap^\ast \qquad\text{almost surely}.\]
Our argument is adaped from~\cite[below Equation~27]{Marzouk:Scaling_limits_of_random_bipartite_planar_maps_with_a_prescribed_degree_sequence}, which itself was adapted from the work of Bettinelli \& Miermont~\cite[Lemma 32]{Bettinelli-Miermont:Compact_Brownian_surfaces_I_Brownian_disks}. 
According to the maximality property of $\dBmap^\ast$, the bound $\dBmap \le \dBmap^\ast$ holds almost surely so it suffices to prove that if $X, Y$ are i.i.d. uniform random variables on $[0,1]$ such that the pair $(X, Y)$ is independent of everything else, then
\begin{equation}\label{eq:identite_distances_carte_brownienne_repointee}
\dBmap(X,Y) \eqloi \dBmap^\ast(X,Y).
\end{equation}
It is known~\cite[Corollary 7.3]{Le_Gall:Uniqueness_and_universality_of_the_Brownian_map} that the right-hand side is distributed as $\dBmap^\ast(s_\star,Y) = \Lab_Y - \Lab_{s_\star}$, where $s_\star$ is the (a.s. unique~\cite{Le_Gall-Weill:Conditioned_Brownian_trees}) point at which $\Lab$ attains its minimum. The point is that, in the discrete setting, $d_n$ describes the distances in the map between the vertices $(\varphi(u_i))_{0 \le i \le \zeta(M_n)}$, and some vertices of $M_n$ may appear more often that others in this sequence so if one samples two uniform random times, they do not correspond to two uniform random vertices of the map. Nonetheless, this effect disappears at the limit, according to~\eqref{eq:approximation_sites_aretes_carte}. Indeed, fix $X, Y$ i.i.d. uniform random variables on $[0,1]$ such that the pair $(X, Y)$ is independent of everything else, and set $x = \varphi(u_{g(\lceil \lambda(T_n) X\rceil)})$ and $y = \varphi(u_{g(\lceil \lambda(T_n) Y\rceil)})$. Note that $x$ and $y$ are uniform random vertices of $M_n \setminus \{\star\}$, they can therefore be coupled with two independent uniform random vertices $x'$ and $y'$ of $M_n$ in such a way that the conditional probability given $M_n$ that $(x,y) \ne (x', y')$ is at most $2/\upsilon(M_n) \to 0$ as $n \to \infty$; we implicitly assume in the sequel that $(x,y) = (x', y')$. Since $\star$ is also a uniform random vertex of $M_n$, we obtain that
\begin{equation}\label{eq:identite_distances_carte_discrete_repointee}
\dgr(x,y) \eqloi \dgr(\star, y).
\end{equation}
By definition,
\[\dgr(x,y) = d_n(g(\lceil \lambda(T_n) X\rceil), g(\lceil \lambda(T_n) Y\rceil)).\]
Recall that the labels on $T_n$ describe the distances from $\star$ in $M_n$, we therefore have
\[\dgr(\star, y) = L_n(g(\lceil \lambda(T_n) Y\rceil)) - \min_{0 \le j \le \zeta(T_n)} L_n(j) + 1.\]
We obtain~\eqref{eq:identite_distances_carte_brownienne_repointee} by letting $n \to \infty$ in~\eqref{eq:identite_distances_carte_discrete_repointee} along the same subsequence as in~\eqref{eq:convergence_distances_sous_suite}, appealing also to~\eqref{eq:approximation_sites_aretes_carte}.


{\small

}


\begin{thebibliography}{BDFG04}

\bibitem[ABA17]{Addario_Berry-Albenque:The_scaling_limit_of_random_simple_triangulations_and_random_simple_triangulations}
Louigi Addario-Berry and Marie Albenque.
\newblock The scaling limit of random simple triangulations and random simple
  quadrangulations.
\newblock {\em Ann. Probab.}, 45(5):2767--2825, 2017.

\bibitem[Abr16]{Abraham:Rescaled_bipartite_planar_maps_converge_to_the_Brownian_map}
C{\'e}line Abraham.
\newblock Rescaled bipartite planar maps converge to the {B}rownian map.
\newblock {\em Ann. Inst. H. Poincar{\'e} Probab. Statist.}, 52(2):575--595,
  2016.

\bibitem[AD14]{Abraham-Delmas:Local_limits_of_conditioned_Galton_Watson_trees_the_infinite_spine_case}
Romain Abraham and Jean-Fran{\c{c}}ois Delmas.
\newblock Local limits of conditioned {G}alton--{W}atson trees: the infinite
  spine case.
\newblock {\em Electron. J. Probab.}, 19:no. 2, 19, 2014.

\bibitem[Ald93]{Aldous:The_continuum_random_tree_3}
David Aldous.
\newblock The continuum random tree {III}.
\newblock {\em Ann. Probab.}, 21(1):248--289, 1993.

\bibitem[BCM17]{Bernardi-Curien-Miermont:A_Boltzmann_approach_to_percolation_on_random_triangulations}
Olivier Bernardi, Nicolas Curien, and Gr{\'e}gory Miermont.
\newblock {A Boltzmann approach to percolation on random triangulations}.
\newblock {\em {\em Preprint available at
  \href{http://arxiv.org/abs/1705.04064}{\tt arXiv:1705.04064}}}, 2017.

\bibitem[BDFG04]{Bouttier-Di_Francesco-Guitter:Planar_maps_as_labeled_mobiles}
J{\'e}r{\'e}mie Bouttier, Philippe Di~Francesco, and Emmanuel Guitter.
\newblock Planar maps as labeled mobiles.
\newblock {\em Electron. J. Combin.}, 11(1):Research Paper 69, 27, 2004.

\bibitem[Bet10]{Bettinelli-Scaling_limits_for_random_quadrangulations_of_positive_genus}
J\'er\'emie Bettinelli.
\newblock Scaling limits for random quadrangulations of positive genus.
\newblock {\em Electron. J. Probab.}, 15:no. 52, 1594--1644, 2010.

\bibitem[BJM14]{Bettinelli-Jacob-Miermont:The_scaling_limit_of_uniform_random_plane_maps_via_the_Ambjorn_Budd_bijection}
J{\'e}r{\'e}mie Bettinelli, Emmanuel Jacob, and Gr{\'e}gory Miermont.
\newblock The scaling limit of uniform random plane maps, {\it via} the
  {A}mbj\o rn-{B}udd bijection.
\newblock {\em Electron. J. Probab.}, 19:no. 74, 16, 2014.

\bibitem[BM17]{Bettinelli-Miermont:Compact_Brownian_surfaces_I_Brownian_disks}
J\'er\'emie Bettinelli and Gr\'egory Miermont.
\newblock Compact {B}rownian surfaces {I}: {B}rownian disks.
\newblock {\em Probab. Theory Related Fields}, 167(3-4):555--614, 2017.

\bibitem[BS15]{Bjornberg-Stefansson:Random_walk_on_random_infinite_looptrees}
Jakob~E. Bj\"ornberg and Sigurdur~\"Orn Stef\'ansson.
\newblock Random walk on random infinite looptrees.
\newblock {\em J. Stat. Phys.}, 158(6):1234--1261, 2015.

\bibitem[CR18]{Curien-Richier:Duality_of_random_planar_maps_via_percolation}
Nicolas Curien and Lo{\"\i}c Richier.
\newblock {Duality of random planar maps via percolation}.
\newblock {\em {\em Preprint available at
  \href{http://arxiv.org/abs/1802.01576}{\tt arXiv:1802.01576}}}, 2018.

\bibitem[DLG02]{Duquesne-Le_Gall:Random_trees_Levy_processes_and_spatial_branching_processes}
Thomas Duquesne and Jean-Fran\c{c}ois Le~Gall.
\newblock Random trees, {L}\'evy processes and spatial branching processes.
\newblock {\em Ast\'erisque}, (281):vi+147, 2002.

\bibitem[Duq03]{Duquesne:A_limit_theorem_for_the_contour_process_of_conditioned_Galton_Watson_trees}
Thomas Duquesne.
\newblock A limit theorem for the contour process of conditioned
  {Galton--Watson} trees.
\newblock {\em Ann. Probab.}, 31(2):996--1027, 2003.

\bibitem[Duq09]{Duquesne:An_elementary_proof_of_Hawkes_s_conjecture_on_Galton_Watson_trees}
Thomas Duquesne.
\newblock An elementary proof of {H}awkes's conjecture on {G}alton-{W}atson
  trees.
\newblock {\em Electron. Commun. Probab.}, 14:151--164, 2009.

\bibitem[JS15]{Janson-Stefansson:Scaling_limits_of_random_planar_maps_with_a_unique_large_face}
Svante Janson and Sigur{\dh}ur~{\"O}rn Stef{\'a}nsson.
\newblock Scaling limits of random planar maps with a unique large face.
\newblock {\em Ann. Probab.}, 43(3):1045--1081, 2015.

\bibitem[Kes86]{Kesten:Subdiffusive_behavior_of_random_walk_on_a_random_cluster}
Harry Kesten.
\newblock Subdiffusive behavior of random walk on a random cluster.
\newblock {\em Ann. Inst. H. Poincar\'e Probab. Statist.}, 22(4):425--487,
  1986.

\bibitem[Kor12]{Kortchemski:Invariance_principles_for_Galton_Watson_trees_conditioned_on_the_number_of_leaves}
Igor Kortchemski.
\newblock Invariance principles for {Galton--Watson} trees conditioned on the
  number of leaves.
\newblock {\em Stochastic Process. Appl.}, 122(9):3126--3172, 2012.

\bibitem[Kor13]{Kortchemski:A_simple_proof_of_Duquesne_s_theorem_on_contour_processes_of_conditioned_Galton_Watson_trees}
Igor Kortchemski.
\newblock A simple proof of {D}uquesne's theorem on contour processes of
  conditioned {Galton--Watson} trees.
\newblock In {\em S\'eminaire de {P}robabilit\'es {XLV}}, volume 2078 of {\em
  Lecture Notes in Math.}, pages 537--558. Springer, Cham, 2013.

\bibitem[Kor17]{Kortchemski:Sub_exponential_tail_bounds_for_conditioned_stable_Bienayme_Galton_Watson_trees}
Igor Kortchemski.
\newblock Sub-exponential tail bounds for conditioned stable
  {B}ienaym\'e-{G}alton-{W}atson trees.
\newblock {\em Probab. Theory Related Fields}, 168(1-2):1--40, 2017.

\bibitem[KR18]{Kortchemski-Richier:The_boundary_of_random_planar_maps_via_looptrees}
Igor Kortchemski and Lo{\"\i}c Richier.
\newblock The boundary of random planar maps via looptrees.
\newblock {\em {\em Preprint available at
  \href{http://arxiv.org/abs/1802.00647}{\tt arXiv:1802.00647}}}, 2018.

\bibitem[LG99]{Le_Gall:Nachdiplomsvorlesung}
Jean-Fran{\c{c}}ois Le~Gall.
\newblock {\em Spatial branching processes, random snakes and partial
  differential equations}.
\newblock Lectures in Mathematics ETH Z\"urich. Birkh\"auser Verlag, Basel,
  1999.

\bibitem[LG05]{Le_Gall:Random_trees_and_applications}
Jean-Fran{\c{c}}ois Le~Gall.
\newblock Random trees and applications.
\newblock {\em Probab. Surv.}, 2:245--311, 2005.

\bibitem[LG07]{Le_Gall:The_topological_structure_of_scaling_limits_of_large_planar_maps}
Jean-Fran{\c c}ois Le~Gall.
\newblock The topological structure of scaling limits of large planar maps.
\newblock {\em Inventiones mathematicae}, 169(3):621--670, 2007.

\bibitem[LG13]{Le_Gall:Uniqueness_and_universality_of_the_Brownian_map}
Jean-Fran{\c c}ois Le~Gall.
\newblock Uniqueness and universality of the {B}rownian map.
\newblock {\em Ann. Probab.}, 41(4):2880--2960, 2013.

\bibitem[LGLJ98]{Le_Gall-Le_Jan:Branching_processes_in_Levy_processes_the_exploration_process}
Jean-Fran{\c c}ois Le~Gall and Yves Le~Jan.
\newblock Branching processes in {L}\'evy processes: the exploration process.
\newblock {\em Ann. Probab.}, 26(1):213--252, 1998.

\bibitem[LGM11]{Le_Gall-Miermont:Scaling_limits_of_random_planar_maps_with_large_faces}
Jean-Fran{\c c}ois Le~Gall and Gr{\'e}gory Miermont.
\newblock Scaling limits of random planar maps with large faces.
\newblock {\em Ann. Probab.}, 39(1):1--69, 2011.

\bibitem[LGW06]{Le_Gall-Weill:Conditioned_Brownian_trees}
Jean-Fran{\c{c}}ois Le~Gall and Mathilde Weill.
\newblock Conditioned {B}rownian trees.
\newblock {\em Ann. Inst. H. Poincar\'e Probab. Statist.}, 42(4):455--489,
  2006.

\bibitem[Mar16]{Marzouk:Scaling_limits_of_random_bipartite_planar_maps_with_a_prescribed_degree_sequence}
Cyril Marzouk.
\newblock {Scaling limits of random bipartite planar maps with a prescribed
  degree sequence}.
\newblock {\em {\em To appear in} Random Struct. Alg. {\em Preprint available
  at \href{http://arxiv.org/abs/1612.08618}{\tt arXiv:1612.08618}}}, 2016.

\bibitem[Mar18]{Marzouk:Scaling_limits_of_discrete_snakes_with_stable_branching}
Cyril Marzouk.
\newblock {Scaling limits of discrete snakes with stable branching}.
\newblock {\em {\em Preprint available at
  \href{http://arxiv.org/abs/1802.08137}{\tt arXiv:1802.08137}}}, 2018.

\bibitem[Mie09]{Miermont:Tessellations_of_random_maps_of_arbitrary_genus}
Gr{\'e}gory Miermont.
\newblock Tessellations of random maps of arbitrary genus.
\newblock {\em Ann. Sci. \'Ec. Norm. Sup\'er. (4)}, 42(5):725--781, 2009.

\bibitem[Mie13]{Miermont:The_Brownian_map_is_the_scaling_limit_of_uniform_random_plane_quadrangulations}
Gr{\'e}gory Miermont.
\newblock The {B}rownian map is the scaling limit of uniform random plane
  quadrangulations.
\newblock {\em Acta Math.}, 210(2):319--401, 2013.

\bibitem[MM07]{Marckert-Miermont:Invariance_principles_for_random_bipartite_planar_maps}
Jean-Fran{\c{c}}ois Marckert and Gr{\'e}gory Miermont.
\newblock Invariance principles for random bipartite planar maps.
\newblock {\em Ann. Probab.}, 35(5):1642--1705, 2007.

\bibitem[Nev86]{Neuveu:Arbres_et_processus_de_Galton_Watson}
Jacques Neveu.
\newblock {Arbres et processus de Galton--Watson}.
\newblock {\em Ann. Inst. H. Poincar\'e Probab. Statist.}, 22(2):199--207,
  1986.

\bibitem[Pit06]{Pitman:Combinatorial_stochastic_processes}
Jim Pitman.
\newblock {\em Combinatorial stochastic processes}, volume 1875 of {\em Lecture
  Notes in Mathematics}.
\newblock Springer-Verlag, Berlin, 2006.
\newblock Lectures from the 32nd Summer School on Probability Theory held in
  Saint-Flour, July 7--24, 2002, With a foreword by Jean Picard.

\bibitem[Ric17]{Richier:Limits_of_the_boundary_of_random_planar_maps}
Lo{\"\i}c Richier.
\newblock Limits of the boundary of random planar maps.
\newblock {\em Probab. Theory Related Fields (online first)}, 2017.

\end{thebibliography}
\end{document}